\begin{document}

\title{Surgery and Excision for Furuta--Ohta Invariants on Homology $S^1 \times S^3$}
\author{Langte Ma \thanks{ltma@sjtu.edu.cn}}
\address{Shanghai Jiao Tong University, Shanghai 200240, China}

\maketitle

\begin{abstract}
We prove a surgery formula and an excision formula for the Furuta--Ohta invariant $\lambda_{FO}$ defined on homology $S^1 \times S^3$, which provides more evidence on its equivalence with the Casson--Seiberg--Witten invariant $\lambda_{SW}$. These formulae are applied to compute $\lambda_{FO}$ of certain families of manifolds obtained as mapping tori under diffeomorphisms of $3$-manifolds. In the course of the proof, we establish the existence of asymptotic values for finite energy instantons over end-cylindrical manifolds perturbed by exponentially decay holonomy perturbations, and give a complete description of the structure of the moduli space of charge-zero instantons over any non-compact $4$-manifold whose end is modeled on $[0, \infty) \times T^3$, and whose homology is given by $H_*(D^2 \times T^2; \Z)$. 
\end{abstract}

\tableofcontents

\section{Introduction}

Gauge theory has been seen as an effective method of producing invariants that detect exotic structures on $4$-manifolds since the celebrated work of Donaldson \cite{D83, D87b, D90}. In contrast to the existence of non-smoothable closed topological $4$-manifolds, Freedman\textendash Quinn \cite{FQ90} proved that every open topological $4$-manifold is smoothable. So It is a natural question to ask how many distinct smooth structures a given open topological $4$-manifold supports. In the study of this problem, Furuta\textendash Ohta \cite{FO93} introduced the Furuta\textendash Ohta invariant $\lambda_{FO}(X)$ by means of Yang\textendash Mills theory over any closed $4$-manifold $X$ which shares the same integral homology as $S^1 \times S^3$ and whose infinite cyclic cover $\tilde{X}$ has the same homology as $S^3$. In the same paper Furuta\textendash Ohta conjectured that their invariant $\lambda_{FO}(X) \mod 2$ reduces to the Rohlin invariant $\mu(X)$ which is defined as the Rohlin invariant of any connected $3$-manifold Poincaré dual to a primitive class in $H^1(X; \Z)$. The strength of this conjecture, if proved to be true, would imply that every once punctured topological $4$-manifold admits uncountably many distinct smooth structures, which was proved independently by Gompf \cite{G93} with the help of Taubes' work \cite{T87}.

Instead of approaching the above conjecture of Furuta--Ohta  directly, Mrowka, Ruberman, and Saveliev \cite{MRS11} introduced another invariant $\lambda_{SW}(X)$ using Seiberg--Witten theory, and proved that $\lambda_{SW}(X) \mod 2$ reduces to the Rohlin invariant $\mu(X)$. So the left problem is to establishing the equivalence between the Casson--Seiberg--Witten invariant $\lambda_{SW}$ and the Furuta--Ohta invariant $\lambda_{FO}$. The Casson--Seiberg--Witten invariant $\lambda_{SW}$ has been proved by the author \cite{M1} to satisfy a surgery relation. So the purpose of this paper is to understand how the Furuta--Ohta invariant $\lambda_{FO}$ changes under torus surgery and excision, which in turn provides more evidence on its equivalence with the Casson--Seiberg--Witten invariant $\lambda_{SW}$. 

We start by pinning down the the type of manifolds we will work with and the definition of the Furuta--Ohta invariant. 

\begin{dfn}
Let $X$ be a closed oriented smooth $4$-manifold satisfying the homological assumption
\[
H_*(X; \Z) \cong H_*(S^1 \times S^3; \Z).
\]
We call $X$ an admissible homology $S^1 \times S^3$ if it further satisfies the following property: for any non-trivial $U(1)$-representation $\rho: \pi_1(X) \to U(1)$, one has 
\begin{equation}
H^1(X; \C_{\rho}) = 0,
\end{equation}
which $\C_{\rho}$ is the local system given by the unitary representation $\rho$ with fiber $\C$. 
\end{dfn}

Let $X$ be an admissible homology $S^1 \times S^3$ and $E$ the trivial $\C^2$-bundle over $X$. After fixing a metric on $X$, one can consider anti-self-dual (ASD) $SU(2)$-connections $A$ over $E$, i.e. $* F_A = -F_A$, where $F_A$ is the curvature $2$-form of the connection $A$, and $*$ is the Hodge star given by the metric. The $SU(2)$-automorphisms of $E$ form a group $\G$ which we refer to as the gauge group of $E$ and whose elements are called gauge transformations. The gauge group acts on $SU(2)$-connections via pull-backs. An $SU(2)$-connection $A$ is said to be irreducible if its stabilizer is discrete under this action. By abusing the terminology, we shall refer to either an ASD connection or its orbit under the gauge group action as an instanton over $X$. The moduli space of irreducible instantons is defined as 
\[
\M^*(X):= \left\{ A \text{ an irreducible $SU(2)$-connection on $E$}: * F_A = -F_A \right\} \bign/ \G.
\]
Following the idea of Donaldson \cite{D87}, Ruberman--Saveliev \cite{RS1} proved that after adding a generic perturbation $\sigma$ to the ASD equation, the perturbed moduli space $\M^*_{\sigma}(X)$ becomes a compact orientable $0$-manifold whose orientation is specified by the orientation of $X$ and a choice of a generator $1_X \in H^1(X; \Z)$. The Furuta--Ohta invariant is then defined as a quarter of the signed count of perturbed irreducible instantons over $E$:
\[
\lambda_{FO}(X):= \frac{1}{4} \# \M^*_{\sigma}(X).
\]
This count is proved \cite{RS1} to be independent of the choice of metrics and perturbations on $X$ due to its admissibility. If one changes the sign of the generator $1_X$, the sign of $\lambda_{FO}(X)$ should be reversed. However, we have chosen to make such a choice implicit in the notation. It is worth noting that when $X=S^1 \times Y$ is given by the product of $S^1$ with an integral homology sphere $Y$, the Furuta--Ohta invariant coincides with the Casson invariant \cite{AM90} of $Y$.

The first topological operation we consider is called torus surgery. We give a brief description here, and a detailed one later in \autoref{sffo}. Let $X$ be an admissible integral homology $S^1 \times S^3$ with a fixed generator $1_X \in H^1(X; \Z)$, and $\iota: \mathcal{T} \hookrightarrow X$ an embedded $2$-torus satisfying the following homological assumption:
\[
\text{ the induced map } \iota_*: H_1(\mathcal{T}; \Z) \longrightarrow H_1(X; \Z) \text{ is surjective}. 
\]
We shall refer to such a torus as an essentially embedded torus. Let us write $\nu(\mathcal{T})$ for a tubular neighborhood of $\mathcal{T}$ in $X$. Since the intersection form of $X$ vanishes, we can fix an identification $\nu(\mathcal{T}) \cong D^2 \times T^2$ which we refer to as a framing. Let $M=\overline{X \backslash \nu(\mathcal{T})}$ be the closure of the complement of the neighborhood. It is straightforward to compute that $H_*(M; \Z) \cong H_*(D^2 \times T^2; \Z)$. The framing provides us with a basis $\{[\mu], [\lambda], [\gamma]\}$ for $H_1(\partial \nu(\mathcal{T}); \Z)$. We require $[\gamma] \in H_1(X; \Z)$ to be the dual of the generator $1_X \in H^1(X; \Z)$, $[\mu]$ to be the class represented by the meridian of $\mathcal{T}$, and $[\lambda]$ to be null-homologous in $M$. Given a relatively prime pair $(p, q)$, performing $(p,q)$-surgery along $\mathcal{T}$ results in the $4$-manifold 
\[
X_{p, q} = M \cup_{\varphi_{p,q}} D^2 \times T^2,
\]
where with respect to the basis $\{[\mu], [\lambda], [\gamma]\}$ the gluing map $\varphi_{p, q}$ is specified by 
\begin{equation}
\begin{pmatrix}
p & r & 0 \\
q & s & 0 \\
0 & 0 & 1
\end{pmatrix}
\in SL(3, \Z).
\end{equation}
Technically an element of $SL(3, \Z)$ only specifies an element in the mapping class group of $T^3$ (cf. \cite{H76}). Since the diffeomorphism type of $X_{p,q}$ is determined by the isotopy class of the gluing map, this ambiguity causes no further confusion. We will see later in \autoref{sffo} that the $(1, q)$-surgered manifold $X_{1,q}$ is still an admissible integral homology $S^1 \times S^3$ with a homology orientation induced from that of $X$. As for the $(0,1)$-surgered manifold, we have $H_*(X_{0,1}; \Z) \cong H_*(S^2 \times T^2; \Z)$. Denote by $w_{\mathcal{T}} \in H^2(X_{0,1}; \Z/2)$ the class that is dual to the mod $2$ class of the core $\{0 \} \times T^2$ in the gluing copy $D^2 \times T^2$. We write $D^0_{w_{\mathcal{T}}}(X_{0,1})$ for the signed count of gauge equivalence classes of irreducible anti-self-dual $SO(3)$-connections on the bundle $SO(3)$-bundle $E_0$ over $X_{0,1}$ characterized by 
\[
p_1(E_0)=0 \text{ and } w_2(E_0)=w_{\mathcal{T}}. 
\]
We shall explain later in \autoref{sffo} that the count is independent of the choices of generic perturbations. The torus surgery formula relates $\lambda_{FO}(X)$, $\lambda_{FO}(X_{1,q})$, and $D^0_{w_{\mathcal{T}}}(X_{0,1})$ as follows.

\begin{thm}\label{surq}
Let $X$ be an admissible homology $S^1 \times S^3$ and $\mathcal{T} \subset X$ an essentially embedded torus. Given an integer $q \in \Z$, after fixing a generator $1_X \in H^1(X; \Z)$ the Furuta--Ohta invariant of the $(1, q)$-surgered manifold $X_{1, q}$ is related to the Furuta--Ohta invariant of $X$ by 
\[
\lambda_{FO}(X_{1, q}) = \lambda_{FO}(X) + qD^0_{w_{\mathcal{T}}}(X_{0,1}). 
\]
\end{thm}

Recall that the surgery formula of the Casson invariant \cite{AM90} with respect to a knot $\mathcal{K}$ in an integral homology sphere $Y$ takes the form 
\[
\lambda\left(Y_{{1 \over q}}(\mathcal{K})\right) = \lambda(Y) + {q \over 2}\Delta''_{\mathcal{K}}(1),
\]
where $\Delta_{\mathcal{K}}(t)$ is the symmetrized Alexander polynomial of $\mathcal{K}$. Since the Furuta--Ohta invariant of the product $S^1 \times Y$ agrees with the Casson invariant $\lambda(Y)$, comparing with the surgery formula of the Furuta--Ohta invariant gives us the following result.
\begin{cor}\label{0d0s}
Let $\mathcal{K} \subset Y$ be a knot in an integral homology sphere. Then 
\[
2D^0_{w_{\mathcal{T}}}(S^1 \times Y_0(K))  = \Delta''_{\mathcal{K}}(1).
\]
\end{cor}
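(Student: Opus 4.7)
The plan is to apply Theorem \ref{surq} in the product case $X = S^1 \times Y$ and then compare the resulting identity with the classical Casson surgery formula.

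First I would set up the torus surgery. Take $\mathcal{T} = S^1 \times \mathcal{K} \hookrightarrow S^1 \times Y$, which is essentially embedded because its first homology contains the $S^1$-factor generating $H_1(S^1 \times Y; \Z)$. With the product framing, the meridian $\mu$ is the meridian of $\mathcal{K}$ in $\{pt\} \times Y$, the longitude $\lambda$ is the Seifert longitude of $\mathcal{K}$ (null-homologous in the knot complement inside $Y$, hence null-homologous in $M = S^1 \times (Y \setminus \nu(\mathcal{K}))$), and $\gamma$ is the $S^1$-factor dual to the chosen generator of $H^1(X; \Z)$. A $(1, q)$-surgery along $\mathcal{T}$ then produces the product $4$-manifold
\[
X_{1, q} = S^1 \times Y_{1/q}(\mathcal{K}),
\]
while the $(0,1)$-surgery yields $X_{0,1} = S^1 \times Y_0(\mathcal{K})$.

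Next I would invoke the identification $\lambda_{FO}(S^1 \times Y') = \lambda(Y')$ for any integral homology sphere $Y'$, which is established in \cite{RS1}. Applying Theorem \ref{surq} to $X = S^1 \times Y$ gives
\[
\lambda(Y_{1/q}(\mathcal{K})) = \lambda_{FO}(X_{1,q}) = \lambda_{FO}(X) + \frac{q}{2}\, D^0_{w_{\mathcal{T}}}(S^1 \times Y_0(\mathcal{K})) = \lambda(Y) + \frac{q}{2}\, D^0_{w_{\mathcal{T}}}(S^1 \times Y_0(\mathcal{K})).
\]
On the other hand, the Casson surgery formula reads
\[
\lambda(Y_{1/q}(\mathcal{K})) = \lambda(Y) + \frac{q}{2} \Delta''_{\mathcal{K}}(1).
\]
Subtracting these two identities, the left-hand sides cancel and $\lambda(Y)$ drops out, yielding
\[
\frac{q}{2}\, D^0_{w_{\mathcal{T}}}(S^1 \times Y_0(\mathcal{K})) = \frac{q}{2}\, \Delta''_{\mathcal{K}}(1)
\]
for every integer $q$. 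Specializing to $q = 1$ (or equivalently noting that both sides are linear in $q$ with the same integer coefficient) gives the claimed equality.

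The only real subtlety is matching conventions: one needs to check that the homology orientations fixed on $X$ and on $X_{1,q}$ in Theorem \ref{surq} agree with the product homology orientation on $S^1 \times Y$ and on $S^1 \times Y_{1/q}(\mathcal{K})$ used in the identity $\lambda_{FO}(S^1 \times Y') = \lambda(Y')$. This is the step where a sign could enter, but since the formula in Theorem \ref{surq} is proved precisely so that these orientations are compatible under the surgery, no additional sign arises and the corollary follows at once. No genuine analytic work is required beyond what has already gone into Theorem \ref{surq}.
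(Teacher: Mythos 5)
Your proposal is correct and is essentially the paper's own argument: the corollary is obtained exactly by applying Theorem \ref{surq} to $X = S^1 \times Y$ with $\mathcal{T} = S^1 \times \mathcal{K}$, using $\lambda_{FO}(S^1 \times Y') = \lambda(Y')$ from \cite{RS1}, and comparing with the Casson surgery formula $\lambda(Y_{1/q}(\mathcal{K})) = \lambda(Y) + \frac{q}{2}\Delta''_{\mathcal{K}}(1)$. The identifications $X_{1,q} = S^1 \times Y_{1/q}(\mathcal{K})$ and $X_{0,1} = S^1 \times Y_0(\mathcal{K})$ that you spell out are left implicit in the paper, but the route is the same.
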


The surgery formula of the Furuta--Ohta invariant should be compared with that of the Casson--Seiberg--Witten invariant \cite{M1}:
\[
\lambda_{SW}(X_{1, q}) = \lambda_{SW}(X) + q \mathcal{SW}(X_{0,1}),
\] 
where $\mathcal{SW}(X_{0,1})$ is the Seiberg--Witten invariant of $X_{0,1}$ computed in the chamber specified by small perturbations (cf. \cite[(1.4)]{M1}). The Casson--Seiberg--Witten invariant is defined using Seiberg--Witten theory \cite{MRS11} as a combination of the count of irreducible monopoles and an index-theoretical correction term. If one could further verify Witten's conjecture \cite{W94} relating Donaldson invariants and Seiberg--Witten invariants in the case of non-simply connected $4$-manifolds with $b^+=1$ following the approach of Pidstrigach--Tyurin \cite{P94, PT95} and Feehan--Leness \cite{FL15, FL18},  then the surgery formulae would imply the equivalence between $\lambda_{FO}$ and $\lambda_{SW}$ for admissible homology $S^1 \times S^3$ that can be obtained via such torus surgeries on manifolds where the equivalence has been verified. 

On the other hand, we apply the surgery formula to give an independent computation of the Furuta--Ohta invariant for manifolds arising as the mapping torus of finite order diffeomorphism considered in \cite{LRS20}.

\begin{prop}\label{fod}
Let $\mathcal{K}$ be a knot in an integral homology sphere $Y$. Given an integer $n > 1$, we denote by $\Sigma_n(Y, \mathcal{K})$ the $n$-fold cyclic cover of $Y$ branched along $\mathcal{K}$ and $X_n(Y, \mathcal{K})$ the mapping torus of $\Sigma_n(Y, \mathcal{K})$ under the generating covering transformation. Assume that $\Sigma_n(Y, \mathcal{K})$ is a rational homology sphere. Then 
\[
\lambda_{FO}(X_n(Y, \mathcal{K})) = n\lambda(Y) + {1 \over 8} \sum_{m=1}^{n-1}\sign^{m/ n}(Y, \mathcal{K}),
\]
where $\sign^{m/n}(Y, \mathcal{K})$ is the Levine--Tristram signature of $\mathcal{K}$ (cf. \cite{L69, T69}). 
\end{prop}

The argument of \cite[Proposition 6.1]{LRS20} implies that the mapping torus $X_n(Y, \mathcal{K})$ of $\Sigma_n(Y, \mathcal{K})$ under the generating covering transformation is an admissible homology $S^1 \times S^3$ if and only if $\Sigma_n(Y, \mathcal{K})$ is a rational homology sphere.  So our assumption on $\Sigma_n(Y, \mathcal{K})$ is necessary. This computation was carried out in \cite[Theorem 6.4]{LRS20} using a more direct method by relating the Furuta--Ohta invariant of $X_n(Y, \mathcal{K})$ to the equivariant Casson invariant of $\Sigma_n(Y, \mathcal{K})$. In \cite{M1}, the author also computed the Casson--Seiberg--Witten invariant for $X_n(Y, \mathcal{K})$ without assuming that $\Sigma_n(Y, \mathcal{K})$ is a rational homology sphere. However, when $\Sigma_n(Y, \mathcal{K})$ fails to be a rational homology sphere, the definition of the Furuta--Ohta invariant requires a proper generalization. 

We move to the next topological operation which we call torus excision in this paper. The idea is to replace the torus neighborhood $D^2 \times T^2$ with a homology $D^2 \times T^2$ and glue it to the complement by further applying a diffeomorphism. Let $(X_1, \mathcal{T}_1)$ and $(X_2, \mathcal{T}_2)$ be two pairs of essentially embedded torus in admissible homology $S^1 \times S^3$'s as above. We fix framings for both $\nu(\mathcal{T}_1)$ and $\nu(\mathcal{T}_2)$ to get a basis $\{\mu_i, \lambda_i, \gamma_i\}$ of $H_1(\partial\nu(\mathcal{T}_i))$ as before. To emphasize the orientation, we write $X_1 = M_1 \cup \nu(\mathcal{T}_1)$ and $X_2 = \nu(\mathcal{T}_2) \cup M_2$, i.e. $\partial M_1 = -\nu(\mathcal{T}_1)$ and $\partial \nu(\mathcal{T}_2) =-M_2$ are identified with $T^3$ in an orientation-preserving manner. Suppose $\varphi: \partial M_2 \to \partial M_1$ is a diffeomorphism so that the glued manifold 
\[
X_1 \#_{\varphi} X_2:=M_1 \cup_{\varphi} M_2
\]
is still an admissible homology $S^1 \times S^3$. Moreover, we write $X_{1, \varphi} = M_1 \cup_{\varphi} D^2 \times T^2$ and $X_{2, \varphi}=D^2 \times T^2 \cup_{\varphi} M_2$. We will give more explanations on what the gluing map means concretely in \autoref{fsfo}. Roughly speaking, $D^2 \times T^2$ is glued to $M_1$ the same way as $M_2$ is and to $M_2$ the same way as $M_1$ is.  Since the admissible assumption is purely homological, we see that both $X_{1, \varphi}$ and $X_{2, \varphi}$ are both admissible. The excision formula of the Furuta--Ohta invariants states the following. 

\begin{thm}\label{exif}
Suppose $X_1$ and $X_2$ are two admissible homology $S^1 \times S^3$’s with embedded tori $\mathcal{T}_1$ and $\mathcal{T}_2$ respectively given by the above description. Then the Furuta--Ohta invariant is additive under excision along these two tori:  
\[
\lambda_{FO}(X_1 \#_{\varphi} X_2)  = \lambda_{FO} (X_{1, \varphi}) + \lambda_{FO}(X_{2, \varphi}).
\]
\end{thm}

Note that $X_{i, \varphi}$ is obtained from $X_i$ via a torus surgery. When the gluing map $\varphi$ has the form we considered in \autoref{surq}, we can further expand the formula as 
\[
\begin{split}
\lambda_{FO}(X_1 \#_{\varphi_{1, q}} X_2)  & = \lambda_{FO} (X_1) +  qD^0_{w_{\mathcal{T}}}(X_{1, \varphi_{0,1}}) \\
& + \lambda_{FO}(X_2) +  qD^0_{w_{\mathcal{T}}}(X_{2, \varphi_{0,1}}). 
\end{split}
\]
We will see later in examples there are certain interesting gluing maps that are not of the form we considered in the surgery formula. Thus we need a generalized surgery formula to compare $\lambda_{FO}(X_{i, \varphi})$ with $\lambda_{FO}(X_i)$. It turns out there is an extra term coming out in the formula as we go through the proof of \autoref{surq}, which is caused by the contribution of some `bifurcation points' (cf. \autoref{bfp}) in the moduli space of the torus complements $M_i$. 

We note that the fiber sum operation considered in \cite{M2} is a special case of \autoref{exif} The fiber sum of $(X_1, \mathcal{T}_1)$ and $(X_2, \mathcal{T}_2)$ is given by gluing the torus complements by a map interchanging the meridian $\mu$ and longitude $\lambda$:
\[
X_1 \#_{\mathcal{T}} X_2:=M_1 \cup_{\varphi_{\mathcal{T}}} M_2,
\]
where with respect to the basis $\{\mu_i, \lambda_i, \gamma_i\}$, $\varphi_{\mathcal{T}}$ is given by 
\begin{equation}\label{fsrm}
\varphi_{\mathcal{T}}=
\begin{pmatrix}
0 & 1 & 0 \\
1 & 0 & 0 \\
0 & 0 & 1
\end{pmatrix}.
\end{equation}
Note that $M_1 \cup_{\varphi_{\mathcal{T}}} D^2 \times T^2= X_1$, $D^2 \times T^2 \cup_{\varphi_{\mathcal{T}}} M_2 = X_2$. We conclude that the Furuta--Ohta invariant is additive under taking torus fiber sum. 

\begin{cor}\label{fibs}
Suppose $X_1$ and $X_2$ are two admissible homology $S^1 \times S^3$’s with embedded tori $\mathcal{T}_1$ and $\mathcal{T}_2$ respectively given by the above description. Then the Furuta--Ohta invariant is additive under fiber-sum along these two tori:  
\[
\lambda_{FO}(X_1 \#_{\mathcal{T}} X_2)= \lambda_{FO}(X_1) + \lambda_{FO}(X_2).
\]
\end{cor}

In the product case $X_i=S^1 \times Y_i$ and $\mathcal{T}_i = S^1 \times \mathcal{K}_i$, i=1, 2, the fiber sum $X_1 \#_{\mathcal{T}} X_2$ is the product of $S^1$ with the knot splicing $Y_1 \#_{\mathcal{K}} Y_2$ of the pairs $(Y_1, \mathcal{K}_1)$ and $(Y_2, \mathcal{K}_2)$. Then the fiber sum formula for the Furuta--Ohta invariant recovers the additivity of the Casson invariant under knot splicing. 

Both the proofs of \autoref{surq} and \autoref{exif} rely on understanding the moduli space of instantons on the torus complement $X \backslash \mathcal{T}$ on which we choose metrics that are cylindrical on $\nu(\mathcal{T}) \backslash \mathcal{T}$. The moduli space of instantons over end-cylindrical manifolds has been studied by various authors, see for instance \cite{T93} and \cite{MMR}. However, there are difficulties in the case we are considering that were not fully understood previously. One of the difficulties is caused by the fact that $b^+(X \backslash \mathcal{T})=0$, which prevents us from using metric perturbations to get rid of the reducible instantons. Another issue is that the asymptotic manifold of $M$ is the $3$-torus $T^3$ whose moduli space has a certain degeneracy, especially when we consider the trajectories on the torus complement flowing to the singular points in the moduli space. To resolve these issues, we combine holonomy perturbations considered by Donaldson \cite{D87} with the ‘center manifold’ technique developed in \cite{MMR} to give a complete description of the structure of such moduli spaces. 

For the rest of this section, we state the results concerning moduli spaces. Let $Z = M \cup [0, \infty) \times Y$ be an end-cylindrical manifold where $M$ is a compact $4$-manifold with boundary, $Y$ is a closed $3$-manifold. A metric $g$ is fixed on $Z$ so that $g|_{[0, \infty) \times Y} = dt + h$, where $h$ is a metric on $Y$. The charge-zero moduli space $\M_{\sigma}(Z)$ of perturbed instantons on $Z$ consists of gauge equivalence classes of $SU(2)$-connections $A$ on the trivial bundle $\C^2 \times Z$ satisfying the following:
\begin{enumerate}
\item The self-dual part of the curvature equals the perturbation, i.e. $F^+_A=\sigma(A)$.
\item The energy of $A$ is finite, i.e. $\int_Z |F_A|^2 d\vol< \infty$. 
\item The instanton charge of $A$ vanishes, i.e. $\int_Z \tr(F_A \wedge F_A) =0$. 
\end{enumerate}
The perturbation $\sigma$ is chosen to be gauge-equivariant and satisfies an exponential decay condition along the cylindrical end:
\[
\| \sigma(A) \|_{L^{\infty}(\{t \} \times Y)} \leq Ce^{-\mu t},
\]
where $C$ and $\mu$ are some positive constants independent of $A$. The space $\mathcal{P}_{\mu}$ of perturbations will be constructed more carefully as a Banach space. We write
\begin{equation}
\chi(Y):=\Hom(\pi_1(Y), SU(2)) \bign/\Ad
\end{equation}
for the $SU(2)$-character variety of the $3$-manifold $Y$. Via the holonomy map, $\chi(Y)$ is identified with the gauge equivalence classes of the flat $SU(2)$-connections on $Y$. Our next result concerns with the existence of the asymptotic map on $\M_{\sigma}(Z)$. 

\begin{thm}\label{asyom}
Given $[A] \in \M_{\sigma}(Z)$, the limit $\lim_{t \to \infty} [A|_{\{t \} \times Y}]$ exists and lies in $\chi(Y)$. The assignment of $[A]$ to its limit defines a continuous map
\[
\partial_+: \M_{\sigma}(Z)  \longrightarrow \chi(Y).
\]
\end{thm}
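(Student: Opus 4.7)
The plan is to view the restriction of $A$ to the cylindrical end $[0,\infty) \times Y$ in temporal gauge as an approximate downward trajectory of the Chern--Simons functional, and then exploit finite energy together with analyticity of $\cs$ to extract a unique flat limit.

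First, I would establish an energy-decay estimate on the end. Combining $F_A^+ = \sigma(A)$ with the vanishing Chern--Weil condition $\int_Z \tr(F_A \wedge F_A)=0$ gives
\[
\int_Z |F_A|^2 \;=\; 2\int_Z |F_A^+|^2 \;=\; 2\int_Z |\sigma(A)|^2,
\]
and the exponential bound $\|\sigma(A)\|_{L^\infty(\{t\} \times Y)} \le C e^{-\mu t}$ upgrades this to the exponentially small tail
\[
\int_{[T,\infty) \times Y} |F_A|^2 \;\le\; C' e^{-2\mu T}.
\]
On each slab $[t,t+1] \times Y$ the curvature is thus small in $L^2$, and by interior elliptic estimates for the perturbed ASD equation it is small in every $C^k$ norm as well. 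Uhlenbeck's local gauge-fixing theorem then places $A$ on each slab in Coulomb gauge near a flat connection, so that along any sequence $t_n \to \infty$ one extracts a translation-invariant weak limit on $\R \times Y$ which descends to a class $[\alpha] \in \chi(Y)$.

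The key step is upgrading subsequential convergence to honest convergence. Writing $A = B(t) + c(t)\,dt$ in temporal gauge, the perturbed ASD equation becomes the approximate downward gradient flow
\[
\partial_t B(t) \;=\; -*_Y F_{B(t)} + \text{(exponentially small correction)},
\]
whose unperturbed part is $-\grad \cs$. Since $\cs$ is a real-analytic gauge-invariant functional, the \L ojasiewicz--Simon inequality in its gauge-theoretic form (c.f.\ \cite{MMR}) yields a finite length bound
\[
\int_T^\infty \|\partial_t B(t)\|_{L^2(Y)}\, dt \;<\; \infty.
\]
This forces $[B(t)]$ to be Cauchy in $\chi(Y)$ with its $C^\infty$ topology, so the subsequential limit $[\alpha]$ is independent of the choice of $t_n$; we set $\partial_+[A] := [\alpha]$.

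For continuity, I would appeal to standard gauge-theoretic bootstrapping: if $[A_n] \to [A]$ in $\M_\sigma(Z)$, then after gauge fixing on compact pieces one has $C^\infty_{loc}$ convergence $A_n \to A$, while the energy-decay estimate above is uniform over compact subsets of the moduli space. A diagonal argument across an exhaustion of the end transfers this convergence to the asymptotic slices and yields $\partial_+[A_n] \to \partial_+[A]$ in $\chi(Y)$. The main obstacle will be the uniqueness of the limit itself, because the character variety of $T^3$ has positive-dimensional components with highly degenerate critical points of $\cs$, so naive Palais--Smale compactness only produces subsequential limits modulo gauge. Overcoming this requires the full \L ojasiewicz--Simon machinery or, equivalently, the center-manifold analysis the paper alludes to, and verifying its hypotheses --- in particular that the perturbation decays faster than the \L ojasiewicz exponent of $\cs$ near each flat connection --- is where the bulk of the analytic work lies.
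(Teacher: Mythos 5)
Your overall architecture is sound and matches the paper's: use finite energy to place $A$ on slabs near a flat connection, then invoke a \L ojasiewicz--Simon type inequality to upgrade subsequential convergence of $[B(t)]$ to honest convergence. But there is a concrete error at the start, and it matters for the rest of the argument.

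You claim that the global identity $\int_Z |F_A|^2 = 2\int_Z |F_A^+|^2$ together with the exponential bound on $\sigma(A)$ gives the tail estimate $\int_{[T,\infty)\times Y}|F_A|^2 \le C' e^{-2\mu T}$. This does not follow. The Chern--Weil vanishing is a statement over all of $Z$, not over the end. Over the end you only control $\int_{[T,\infty)\times Y}|F_A^+|^2$; the anti-self-dual part satisfies
\[
\int_{[T,\infty)\times Y}\bigl(|F_A^-|^2 - |F_A^+|^2\bigr) = -\int_{[T,\infty)\times Y}\tr(F_A\wedge F_A),
\]
and the right-hand side is (up to constants) the drop of the Chern--Simons functional along the trajectory $B(t)$ from $t=T$ to $t=\infty$, which is precisely the quantity whose smallness one is trying to establish. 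Finite energy only tells you the tail goes to zero, not at an exponential rate; the exponential rate is an output of the theorem, not an input. This is not cosmetic: if the energy tail were a priori exponentially small, one could bound the \L ojasiewicz argument trivially, and the difficulty the paper's remark explicitly flags --- that the setting fits none of the standard cases (i)--(iii) --- would evaporate.

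Relatedly, your appeal to ``the \L ojasiewicz--Simon inequality yields a finite length bound'' skips over the main point. For the unperturbed gradient flow the length bound is standard, and it also works when the perturbation is pointwise dominated by the gradient (Lemma \ref{lis} in the paper, requiring $\|p_A(t)\| \le \alpha \|\grad\cs_\Gamma(B(t))\|$ with $\alpha<1$). But there is no a priori reason the perturbation should be dominated by the gradient along the whole end. The heart of the paper's Proposition \ref{lgf} is the dichotomy between time intervals where the gradient dominates (where Lemma \ref{lis} applies) and intervals where the perturbation dominates (where the exponential decay of $p_A(t)$ is used directly, together with the monotone modified functional $\cs_A(t)$ of Lemma \ref{l2.8}). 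Your proposal frames the remaining difficulty as verifying that the perturbation decays faster than the \L ojasiewicz exponent, but that is the wrong diagnosis: the issue is controlling the regime where the perturbation term is comparable to or larger than $\|\grad\cs_\Gamma\|$, which the decay rate alone does not settle. You would need to reproduce (or replace) the interval-by-interval argument of Proposition \ref{lgf} to close this gap.
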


\begin{rem}\label{r1.8}
The convergence here is essentially stronger than the one provided by Uhlenbeck’s weak compactness (cf. \cite{U82, W04}) for connections of small energy. As one will see in the proof later, the limit of $[A|_{\{t\} \times Y}]$ is taken with respect to the $C^{\infty}$-topology, after suitable gauge transformations. 
\end{rem}

\begin{rem}
The core step in the proof of \autoref{asyom} is to estimate the length of perturbed downward gradient flowlines of the Chern--Simons functional on $Y$. Let us write $A|_{\{t\} \times Y} = B(t)$. Suppose, when restricted to the cylindrical end $[0, \infty) \times Y$, the perturbed ASD equation takes the form 
\[
\dot{B}(t) = -\grad \cs(B(t)) + \mathfrak{p}(B(t)),
\] 
where $\cs$ is the Chern--Simons functional on $Y$, and $\mathfrak{p}$ is some functional on the space of connections of $Y$ which we regard as a perturbation. The existence of limits of perturbed flowlines has been established in the following cases among existing literature. 
\begin{enumerate}[label=(\alph*)]
\item There is a perturbed functional $\cs_{\mathfrak{p}}$ satisfying 
\[
-\grad \cs_{\mathfrak{p}}(B(t)) =  -\grad \cs(B(t)) + \mathfrak{p}(B(t)).
\]
Moreover, one requires either the critical points of $\cs_{\mathfrak{p}}$ are either Morse or Morse--Bott. 
\item $\|\mathfrak{p}(B(t))\| \leq \alpha \| \grad \cs(B(t))\|$ for some $\alpha <1$ when $t \gg 0$. 

\item The perturbation $\mathfrak{p}$ is analytic. 
\end{enumerate}
Case (a) is usually encountered when defining Floer homology. One may consult \cite{D02} for $\cs_{\mathfrak{p}}$ being Morse and \cite{L18} for $\cs_{\mathfrak{p}}$ being Morse--Bott. Case (b) was considered in \cite{MMR} where $\mathfrak{p}$ is a metric perturbation. Case (c) was proved in broader circumstances by Simon \cite{S83} and Feehan-Maridakis \cite{FM20}. Sometimes people (see for instance \cite{MMS, N00, K04}) also consider perturbations with compact support, which falls into case (b). We will essentially show the `center manifold' technique in \cite{MMR} works for perturbations parametrized by a Banach space under our consideration. We also note \autoref{asyom} is proved in a slightly more general context in \autoref{t5.1} where we shall consider based moduli spaces. 
\end{rem}

We refer to $\partial_+$ as the asymptotic map. Now we focus on the type of manifolds of our primary interests. Let $Z$ be a manifold with cylindrical end satisfying the following: 
\begin{enumerate}
\item The integral homology of $Z$ is the same as that of $D^2 \times T^2$, i.e. $H_*(Z; \Z) \cong H_*(D^2 \times T^2; \Z)$. 
\item The cylindrical end of $Z$ is modeled on $[0, \infty) \times T^3$. The torus $T^3$ is endowed with a flat metric $h$. 
\end{enumerate}
We shall refer to a pillowcase as the quotient of a torus $T^2$ under the hypoelliptic involution. Thus a pillowcase is an orbifold smoothable to $S^2$ with $4$ singular points having $\Z/2$ as their isotropy groups. One will see later in \autoref{tims} that each central connection in $\chi(T^3)$ is a singular point, and there are eight of them up to gauge equivalence. We denote by $\mathfrak{C} \subset \chi(T^3)$ the set of these central classes. Recall an $SU(2)$-connection $A$ on $Z$ is said to be irreducible if its stabilizer under the action of the gauge group is discrete. We say $A$ is reducible if $A$ is not irreducible. In this way, we can decompose the moduli space into two parts consisting of irreducible instantons $\M^*_{\sigma}(Z)$ and reducible instantons $\M^{\Red}_{\sigma}(Z)$ respectively:
\[
\M_{\sigma}(Z) = \M^*_{\sigma}(Z) \cup \M^{\Red}(Z).
\]
We now give a complete description of the moduli space over $Z$. 

\begin{thm}\label{str}
Let $Z$ be a smooth Riemannian non-compact $4$-manifold whose end is modeled on $[0, \infty) \times T^3$ as a product of $[0, \infty)$ with the flat torus. Suppose the homology of $Z$ satisfies
\[
H_*(Z; \Z) \cong H_*(D^2 \times T^2; \Z).
\]
After fixing an homology orientation of $Z$ and generically small perturbation $\sigma$, the charge-zero moduli space of perturbed finite-energy anti-self-dual instantons $\M_{\sigma}(Z)$ is a compact smooth oriented stratified space with the following structures: 
\begin{enumerate}[label=(\alph*)]
\item The reducible locus $\M^{\Red}_{\sigma}(Z)$ is a pillowcase whose singular points consist of the four gauge equivalence classes of central flat $SU(2)$-connections on $Z$. 
\item The irreducible stratum $\M^*_{\sigma}(Z)$ is a smooth oriented $1$-manifold with a finite number of components, each of which is diffeomorphic to either the circle $S^1$ or the open interval $(0,1)$. 
\item The ends of the closure of the open arcs in $\M^*_{\sigma}(Z)$ lie in $\M^{\Red}_{\sigma}(Z)$ away from the singular points. Near each end $[A] \in \M^{\Red}_{\sigma}(Z)$, the moduli space $\M_{\sigma}(Z)$ is modeled on a neighborhood of $0$ in the zero set $\mathfrak{o}^{-1}(0)$, where 
\begin{equation*}
\begin{split}
\mathfrak{o}: \R^2 \oplus \R_+ &\longrightarrow \C \\
(x_1, x_2, r) & \longmapsto (x_1 + ix_2) \cdot r. 
\end{split}
\end{equation*}
\item Given a submanifold of $\chi(T^3)$, the perturbation $\sigma$ can be chosen so that the asymptotic map, when restricted to the irreducible stratum, $\partial_+: \M^*_{\sigma}(Z) \to \chi(T^3)$ is $C^2$, and transverse to that given submanifold.
\item The asymptotic values of irreducible instantons miss the central classes, i.e. 
\[
\partial_+\big (\M^*_{\sigma}(Z)\big) \cap \mathfrak{C} = \varnothing.
\]
\end{enumerate}
\end{thm}

It is natural to wonder whether the techniques in the proof of \autoref{str} can be applied to some other situations. We briefly discuss the difficulties that one might encounter. In general, one can replace the homological assumption on $Z$ by $b^+ = 0$, and the end of $Z$ by $[0, \infty) \times Y$ for an arbitrary closed $3$-manifold $Y$. Moreover, one may consider moduli spaces of perturbed finite energy instantons of higher charges, say $\kappa$. We denote the corresponding moduli space by $\M_{\kappa, \sigma}(Z)$. First, the asymptotic map $\partial_+: \M_{\kappa, \sigma}(Z) \to \chi(Y)$ still exists, since the proof of \autoref{asyom} makes no use of the charge-zero assumption. Then the asymptotic map $\partial_+$, restricted to the irreducible stratum $\M^*_{\kappa, \sigma}(Z)$, is generically transverse to the stratum of $\chi(Y)$ consisting of ‘regular’ points following the argument of \autoref{TIS}. 

The first difference shows up in the compactness of $\M_{\kappa, \sigma}(Z)$. In the presence of higher charge instantons, the phenomena of Uhlenbeck’s bubbling and energy-escape along the end will occur when one passes to subsequences. Moreover, holonomy perturbations obstruct the standard local bootstrapping argument (cf. \cite{U82, DK90}) for small energy instantons over a geodesic four ball. So even taking ‘ideal connections’ and ‘broken trajectories’ into account, the convergence of connection forms will only be in $L^p_1$-topology for $p \geq 2$, which is in contrast to our case where we can work with $L^2_k$-connections with $k$ arbitrarily large, and still get compactness. This issue was explained in \cite{K04, KM11}. 

The second major difference comes from the interaction between the irreducible stratum $\M^*_{\kappa, \sigma}(Z)$ and the reducible stratum $\M^{\Red}_{\kappa, \sigma}(Z)$ under generic perturbations. The local description in (c) of \autoref{str} relies on the assumption $H_*(Z; \Z) \cong H_*(D^2 \times T^2; \Z)$ which enables one to understand the ‘Kuranishi obstruction map’ in its proof (cf. \autoref{nbr}) explicitly under generic perturbations. When the homology of $Z$ is more complicated or the instanton charge $\kappa$ is large, the dimension of the moduli space $\M_{\kappa, \sigma}(Z)$ is large, which may result in complicated stratifications even in generic cases. The best scenario one may expect is that near the reducible stratum the moduli space $\M_{\kappa, \sigma}(Z)$ looks like a ‘cone bundle neighborhood’ as described in \cite[Chapter 14]{MMR}. But as we have seen in \autoref{str}, ‘bifurcations’ already showed up over the reducible stratum, which forms a more singular picture compared to cone bundle neighborhoods.

\vspace{1mm}

\subsection*{Outline}
Here we give an outline of this article. \autoref{pre} introduces the set-up for the moduli space. \autoref{s3} discusses holonomy perturbations with exponential decay along cylindrical ends. \autoref{s4} develops regularity estimates for instantons perturbed by holonomy perturbations. \autoref{EAM} establishes the existence of the asymptotic value for the perturbed instantons, i.e. \autoref{asyom}. \autoref{tims} deduces the transversality of the irreducible moduli space and the asymptotic map. \autoref{trl} is devoted to describing the reducible locus together with its neighborhood under small generic perturbations. \autoref{sffo} and \autoref{fsfo} prove the surgery formula \autoref{surq} and the excision formula \autoref{exif} respectively. 

\subsection*{Acknowledgment}
The author would like to express his gratitude to his advisor Daniel Ruberman for helpful discussions and encouragement. He also wants to thank Cliff Taubes for sharing his expertise and communicating a proof of \autoref{p5.7}. A further acknowledgment is to Simon Donaldson for enlightening discussions. Finally, the author wants to thank the anonymous referees for their valuable comments on improving exposition and mathematics. 

\section{Unperturbed Moduli Spaces}\label{pre}

In this section, we set up the definitions and notations for the moduli space of instantons over $4$-manifolds and flat connections over $3$-manifolds. For more details one may consult \cite{DK90, D02}. 

\subsection{Moduli Spaces of Instantons over $4$-Manifolds}

\begin{dfn}
A Riemannian $4$-manifold $(Z, g)$ with cylindrical end consists of the following data:
\begin{enumerate}
\item[(i)] a compact $4$-manifold $M \subset Z$ whose boundary consists of a closed $3$-manifold $Y$;
\item[(ii)] a cylindrical end $[0, \infty) \times Y$ attached to the boundary $\partial M$ so that 
\[
Z = M \cup [0, \infty) \times Y;
\]
\item[(iii)] a Riemannian metric $g$ whose restriction to a collar neighborhood of the end takes the form 
\[
g|_{(-1, \infty) \times Y}  = dt^2 + h, 
\]
where $h$ is a metric on $Y$, and $t$ is the coordinate function on $(-1, \infty)$.
\end{enumerate} 
\end{dfn}

Let $(Z, g)$ be a smooth $4$-manifold with cylindrical end as above. We denote by $E \to Z$ and $E’ \to Y$ the trivial $\C^2$-bundles over $Z$ and $Y$ respectively. We also identify $E|_{[0, \infty) \times Y} = \pi^* E'$, where $\pi: [0, \infty) \times Y \to Y$ is the projection map. 

Fix an integer $k \geq 3$. We write $\A_{k, loc}$ for the space of $L^2_{k, loc}$ $SU(2)$-connections on $E$. The gauge group $\G_{k+1, loc}$ consists of $L^2_{k+1, loc}$ automorphisms of the associated principle bundle $P$, which is identified with $L^2_{k+1, loc}(Z, SU(2))$. The gauge action is given by
\begin{equation}\label{e2.1}
\begin{split}
\G_{k+1, loc} \times \A_{k, loc} & \longrightarrow A_{k, loc} \\
(g, A) & \longmapsto g\cdot A := A- (d_Ag)g^{-1}
\end{split}
\end{equation}
We write $\Stab(A) \subset \G$ for the isotropy group of $A$ under gauge transformations. As pointed out in \cite[Lemma 4.2.8]{DK90}, $\Stab(A)$ is the centralizer of the holonomy group of $A$ in $SU(2)$. We say $A$ is irreducible if $\Stab(A)=\Z/2$, and reducible if $\Stab(A) \supset U(1)$. In particular when $\Stab(A)=SU(2)$, we say $A$ is central. The configuration space $\A_{k, loc}$ decomposes into the irreducible and reducible parts:
\[
\A_{k, loc}=\A^*_{k, loc} \cup \A^{\Red}_{k, loc}. 
\]
Given a connection $A$ with $F_A \in L^2(Z, \Lambda^2\otimes \SU(2))$, we denote its Chern--Weil integral by
\begin{equation}
\kappa(A):={1 \over {8\pi^2}} \int_Z \tr(F_A \wedge F_A),
\end{equation}
and its energy by 
\begin{equation}
\mathcal{E}(A):=\int_Z |F_A|^2 d\vol.
\end{equation}
Usually $\kappa(A)$ is referred to as the instanton charge when $A$ is anti-self-dual. We write $F_A^+$ for the projection of the curvature $2$-form $F_A$ onto the $+1$-eigenspace of the Hodge star. 

\begin{dfn}
The moduli space of finite-energy  charge-zero instantons is 
\[
\M(Z):= \{A \in \A_{k, loc}: F^+_A=0, \mathcal{E}(A) < \infty, \kappa(A)=0 \} / \G_{k+1, loc}.
\]
\end{dfn}

In the definition, we have omitted $k$ in the notation $\M(Z)$. This is because each class $[A] \in \M(Z)$ admits smooth representatives \cite{DK90} so that the moduli space $\M(Z)$ is actually independent of $k$. 

\begin{lem}\label{l2.3}
Each connection $[A] \in \M(Z)$ is flat.
\end{lem}

\begin{proof}
This is more or less a well-known fact that charge-zero instantons are flat. The reason lies on the observation that
\[
|F_A|^2 = |F_A^+|^2 + |F_A^-|^2 \quad \text{ and } \quad \tr(F_A \wedge F_A) = |F_A^-|^2 - |F_A^+|^2.
\]
So when $F^+_A = 0$, $\kappa(A) = 0$ implies that $F_A^- = 0$ as well. 
\end{proof}

Note that the gauge action $\G_{k+1, loc}$ on $\A$ is not free. To get rid of this issue, one can consider the based moduli space defined as follows. Fix a basepoint $z_0=(0, y_0) \in \{ 0 \} \times Y \subset Z$. Then the gauge group $\G_{k+1, loc}$ acts freely on the product $\A_{k, loc} \times E_{z_0}$, where $E_{z_0}$ is the fiber of $E$ over $z_0$. We then define the based moduli space to be 
\begin{equation}
\begin{split}
\tilde{\M}(Z):= & \bigl\{ (A, v) \in \A_{k, loc} \times E_{z_0} : \\
& F^+_A=0, \mathcal{E}(A) < \infty, \kappa(A)=0  \bigr\} \Bign/ \G_{k+1, loc}.
\end{split}
\end{equation}

\subsection{Moduli Space of Flat connections over $3$-Manifolds}

Let $l=k-1/2 \geq 5/2$. Over the trivial $\C^2$-bundle $E' \to Y$, one can consider the space of $SU(2)$-connections $\A_l(Y)$ of class $L^2_l$, and the gauge group $\G_{l+1}(Y)$ consisting of $L^2_{l+1}$ maps $u: Y \to SU(2)$. The moduli space over $Y$ consists of equivalence classes of flat connections on $E'$: 
\begin{equation}
\M(Y):= \{ B \in \A_{l}(Y): F_B=0 \}\Bign/ \G_{l+1}(Y).
\end{equation}
Usually we write $B$ for a general connection on $E'$, and $\Gamma$ for a general flat connection. Via the holonomy morphism the moduli space $\M(Y)$ is identified with the character variety 
\begin{equation}
\chi(Y):=\Hom(\pi_1(Y), SU(2)) \bign/\Ad.
\end{equation}
Using a basepoint $y_0 \in Y$, one can also consider the based moduli space
\[
\tilde{\M}(Y):=\{ (B, v) \in \A_l(Y) \times E'_{y_0} : F_B=0\} / \mathcal{G}_{l+1}(Y)
\]
Via the holonomy morphism the based moduli space is identified with the representation variety 
\begin{equation}
\mathcal{R}(Y):=\Hom(\pi_1(Y), SU(2)). 
\end{equation}

Let us write $\Gamma_0$ for the product connection on $E’$. Over $\A_l(Y)$ the Chern--Simons functional \cite{CS74} is an analytic function defined by 
\begin{equation}
\cs(B)=- \int_Y \tr\left(\frac{1}{2} b \wedge db + { 1 \over 3} b \wedge b \wedge b\right),
\end{equation}
where $b=B - \Gamma_0 \in L^2_l(Y, T^*Y \otimes \SU(2))$. With respect to the $L^2$-inner product on $1$-forms, the gradient and Hessian of the Chern--Simons functional are computed as (cf. \cite{D02})
\begin{equation}\label{e2.9}
\grad \cs(B)  = *F_B \text{ and } \Hess \cs|_B (b) =* d_B b. 
\end{equation}
Thus the critical points of the Chern--Simons functional consist of flat connections on $Y$. Moreover the restricted Hessian $*d_B|_{\ker d_B^*}$ has real, discrete, and unbounded spectrum (cf. \cite[Lemma 2.1.1]{MMR}). Given a gauge transformation $u \in \G_{l+1}(Y)$ and $B \in \A_l(Y)$, we have (cf. \cite{D02})
\begin{equation}
\cs(u\cdot B) - \cs (B)  = -4\pi^2 \deg u.
\end{equation}
Thus the Chern--Simons functional descends to an $S^1$-valued function on the quotient space $\B(Y)$. 

Over a cylinder $[t_0, t_1] \times Y = I \times Y$, the ASD equation can be written as the equation of downward gradient flowlines in the following sense. Let $A$ be an $SU(2)$-connection over $I \times Y$ in temporal gauge, i.e. $A = B(t) + dt$ with $B(t)$ a path of connections on $Y$. Then the equation $F^+_A = 0$ is equivalent to 
\begin{equation}\label{e2.11}
\dot{B}(t) = - *F_{B(t)} = - \grad \cs(B(t)). 
\end{equation}
More generally, a connection $A$ over $I \times Y$ takes the form $A = B(t) + \beta(t)dt$ where $\beta(t)$ is a path of $\SU(2)$-valued $0$-forms on $Y$. Then the ASD equation takes the form (cf. \cite{MMR}):
\[
\dot{B}(t) = -*F_{B(t)} + d_{B(t)} \beta(t). 
\]
In either case, the energy of $A$ is related to the drop of the Chern-Simons functional (cf. \cite{D02})
\begin{equation}\label{e2.12}
\cs(B(t_0)) - \cs(B(t_1)) = -\frac{1}{2}\int_{I \times Y} \tr(F_A \wedge F_A) = \frac{1}{2}\|F_A\|^2_{L^2(I \times Y)}.
\end{equation}

\section{Holonomy Perturbations}\label{s3}
To get regularity for moduli spaces and transversality for asymptotic maps, we need to introduce perturbations. Since the manifolds we are interested in have $b_+=0$, we cannot use merely metric perturbations as in \cite{FU, MMR}. Following the lines in \cite{D87, K04}, we adopt holonomy perturbations instead. Since we are dealing with manifolds with cylindrical end, the holonomy perturbation will be a mixture of two kinds. The first arises from perturbations on a compact region, while over the end $[0, \infty) \times Y$ it comes from the gradient of some ‘cylinder functions’ over $Y$ with an exponentially decay assumption imposed. 

\subsection{Perturbations over a Compact Region}

Let $Z = M \cup [0, \infty) \times Y$ be a $4$-manifold with cylindrical end as before. Over $Z$, we have the trivialized $\C^2$-bundle $E$. We start with an embedded open four ball $N \subset Z$ and a smooth map $q: S^1 \times N \to Z$ satisfying
\begin{enumerate}
\item[\upshape (i)] $q$ is a submersion; 
\item[\upshape (ii)] $q(1, x)=x$ for any $x \in N$. 
\end{enumerate}
To each $q$, we assign $\Omega_q \subset Z$ a compact domain that contains the image of $q$. For each $x \in N$, the assignment $q(-, x)$ gives us a loop
\[
q_x: S^1 \longrightarrow Z
\]
based at $x$. We denote by $\hol_{q_x} (A) \in \SU(2)$ the traceless part of the holonomy $\Hol_{q_x}(A)$ of $A$ around $q_x$.  Let $\omega \in \Omega^+(N)$ be a self-dual $2$-form supported on $N$. We form an $\SU(2)$-valued $2$-form 
\begin{equation}
V_{q, \omega}(A):=\omega \otimes \hol_q(A) \in \Omega^+(Z; \SU(2))
\end{equation}
supported on $N \subset Z$. Let us write $A_0$ for the product connection over $E$. We have the following estimate for $V_{q, \omega}$. 

\begin{lem}\label{l3.1}
Let $q: S^1 \times N \to Z$ be a submersion and $\omega \in \Omega^+(N)$ a self-dual $2$-form as above. Then for any $L^2_{k, loc}$-connection $A$ over $Z$, we have 
\[
\|V_{q, \omega}(A)\|_{C^0} \leq K_0 \|\omega\|_{C^k},
\]
where $K_0$ is a constant depending only on $q$. Moreover, 
\[
\|V_{q, \omega}(A)\|_{L^2_j(Z)} \leq K_j \|\omega\|_{C^k} \left( 1 + \|a\|^j_{L^2_j(\Omega_q)}\right), \quad 1 \leq j \leq k
\]
where $a = A - A_0 \in L^2_{k, loc}(Z, T^*Z \otimes \mathfrak{su}(2))$, and $K_j$ is a constant depending only on $j$ and $q$. 
\end{lem}

\begin{proof}
Since $q$ is a submersion, we know that 
\[
\|q^*(A) - q^*(A_0)\|_{L^2_j(S^1 \times N)} \leq c. \|A - A_0\|_{L^2_j(\Omega_q)}, \quad 0 \leq j \leq k.
\]
Let’s write $q^*(A) - q^*(A_0) = B(s) + \beta(s)ds$, where $s$ is the coordinate on $S^1$, $B(s)$ a family of $\SU(2)$-valued $1$-forms on $N$, and $\beta(s)$ a family of $\SU(2)$-valued $0$-forms on $N$. Then the holonomy around the loop $q_x$ is given by 
\[
\Hol_{q_x}(A) = \exp(-\int_0^1 \beta(s) ds) \in SU(2).
\]
Taking derivatives of $\Hol_{q_x}(A)$ along the $x$-variable results in 
\[
\nabla^j \Hol_{q_x}(A) = \Hol_{q_x}(A) \cdot \sum_{r_1a_1+...+r_ka_k = j} c_{\pmb{r}, \pmb{a}} \prod_{l=1}^k \left( \int_0^1 \nabla^{r_l} \beta(s) ds\right)^{a_l},
\]
where $c_{\pmb{r}, \pmb{a}}$ is some constant depending only on the tuples $\pmb{r}=(r_1, ..., r_k)$ and $\pmb{a} = (a_1, ..., a_k)$. Thus we conclude
\[
\| \nabla^j \Hol_{q_x}(A)\|_{L^2(N)} \leq c. \|\Hol_{q_x}(A)\| \left(1+ \int_0^1\|\beta(s)\|_{L^2_j(N)}ds \right)^j,
\]
where the constant only depends on $j$. Note that 
\[
 \int_0^1\|\beta(s)\|_{L^2_j(N)}ds  \leq c. \|q^*(A) - q^*(A_0)\|_{L^2_j(S^1 \times N)}  \leq c. \|a\|_{L^2_j(\Omega_q)}. 
\]
The statement now follows immediately from applying Leibniz’s rule to $\omega  \otimes \Hol_q(A)$ and the fact that the $L^2$-norm of $\Hol_q(A)$ is uniformly bounded. 
\end{proof}

Therefore, the holonomy perturbation defines a continuous $\G_{k+1, loc}$-equivariant map
\[
V_{q, \omega}: \A_{k, loc} \longrightarrow L^2_{k, loc}(Z, \Lambda^+ \otimes \SU(2)). 
\]
It turns out that this map is also smooth, which follows from the following estimates derived in \cite{K04}.

\begin{prop}[{\cite[Proposition 3.1]{K04}}]\label{p3.2} Let $q: S^1 \times N \to Z$ be a submersion and $\omega \in \Omega^+(N)$ a self-dual $2$-form as above. Then for any $L^2_{k, loc}$-connection $A=A_0+a$ over $Z$ and $0 \leq j \leq k$, we have
\[
\|D^n V_{q, \omega}|_A (a_1, ..., a_n)\|_{L^2_j(Z)} \leq K_{n, j} \|\omega\|_{C^j} \left( 1 + \|a\|^j_{L^2_j(\Omega_q)}\right) \prod_{i=1}^n \|a_i\|_{L^2_j(\Omega_q)},
\]
where $D^n V_{q, \omega}$ is the $n$-th differential of $V_{q, \omega}$, and $K_{n, j}$ is a constant depending only on $q$. 
\end{prop}

\begin{proof}
In \cite{K04}, the original statement didn’t include the $L^2_j$-norm of $a$. However, the dependence was pointed out in the paragraph below \cite[Proposition 3.5]{KM11}. Following the argument of Kronheimer \cite{K04}, we explain why there should be dependence on the connection form $a = A - A_0$ when higher derivatives are involved. 

Let’s consider a Hilbert bundle $\mathcal{H} \to N$ whose fiber consists of square-integrable $\SU(2)$-valued $1$-forms over $S^1 \times \{x\}$, i.e.
\[
\mathcal{H}_x := L^2(S^1 \times \{x\}, T^*S^1 \otimes \SU(2)).
\]
Then taking traceless holonomy gives rise to a fiber-preserving map
\[
\begin{split}
\hol: \mathcal{H} &\longrightarrow \underline{\SU}(2)\\
\alpha_x & \longmapsto \hol_{S^1 \times \{x\}}(d + \alpha_x),
\end{split}
\]
where $\underline{\SU}(2)$ is the trivial bundle over $N$ with fiber $\SU(2)$, and $\hol$ means the holonomy of $d+\alpha_x$ around $S^1 \times \{x\}$ followed by a projection onto its traceless part. Due to \cite[Lemma 3.2]{K04}, this map extends to a smooth map 
\[
\hol: L^2_k(N, \mathcal{H}) \longrightarrow L^2_k(N, \underline{\SU}(2)). 
\]
On the other hand, we have the restriction map:
\[
\begin{split}
r: \A_k(S^1 \times N) & \longrightarrow L^2_k(N, \mathcal{H}) \\
A & \longmapsto (\alpha:  x  \mapsto (A-A_0)|_{S^1 \times \{x\}}).
\end{split}
\]
Such a restriction makes sense, because $k \geq 2$. From its construction, we also know
\[
\|r(A)\|_{L^2_j}  \leq \|A-A_0\|_{L^2_j},
\]
since all higher derivatives along the $S^1$-direction are not counted in the right-hand side. Now the holonomy perturbation map can be written as $V_{q, \omega}(A) = \omega \otimes \hol \circ r(A)$, whose differential takes the form 
\[
DV_{q, \omega}|_A (a_1) = \omega \otimes D\hol|_{r(A)} \circ Dr|_A(a_1). 
\]
\cite[Lemma 3.2]{K04} tells us that $|D\hol|_{r(A)}(\alpha)| \leq c_1 \|\alpha\|_{L^2}$ fiberwise, where $c_1$ is independent of $A$. Thus we conclude that 
\[
\|DV_{q, \omega}|_A(a_1)\|_{L^2} \leq c. \|\omega\|_{C^0} \|a_1\|_{L^2},
\]
where the constant is independent of $A$. Similar $L^2$-estimate holds for higher differentials of $V_{q, \omega}$, which proves the claim when $j=0$. 

Now let’s first consider the case when $j=1$. We compute that
\[
\nabla DV_{q, \omega}|_A(a_1)  = \nabla \omega \otimes D\hol|_{r(A)} \circ Dr|_A(a_1) 
+  \omega \otimes \nabla \left(D\hol|_{r(A)} \circ Dr|_A(a_1) \right). 
\]
The first term on the right-hand side still gets bounded by $\|\omega\|_{C^1} \|a_1\|_{L^2}$ due to \cite[Lemma 3.2]{K04}. However, the second term will involve $\|A-A_0\|_{L^2_1(N)}$ just as the proof of \autoref{l3.1}. More concretely, let’s write $A-A_0 = B(s) + \beta(s)ds$ and $a_1 = B_1(s) + \beta_1(s)ds$ as before. Then 
\[
\begin{split}
D\hol|_{r(A)} \circ Dr|_A(a_1) & = D\hol|_{r(A)}(\beta_1(s)ds) \\
& = -\left(\int_0^1 \beta_1(s) ds\right) \cdot \exp^0 \left(- \int_0^1 \beta(s) ds\right),
\end{split}
\]
where $\exp^0$ means the composition of the exponential map with the projection onto its traceless part. Thus when we take the covariant derivative $\nabla$ along the base direction, the $L^2_1$-norm of $A-A_0$ will appear due to the Leibniz’s rule, which then contributes to the $L^2_1$-norm of $a$ in our formula. Such a computation can be made for $\nabla^j$ for $j > 1$ in the same manner. This proves our statement. 
\end{proof}

Now we take a countable family of embedded open balls $\{N_{\alpha}\}_{\alpha \in \N}$ in a way that 
\[
M \subset \bigcup_{\alpha} N_{\alpha} \subset M \cup [0, 1] \times Y,
\]
together with submersions $\{q_{\alpha}\}_{\alpha \in \N}$ satisfying the condition that for each $x \in M$, the countable family of maps 
\[
\{q_{\alpha}(-, x) : \alpha \in \N, x \in N’_{\alpha} \}
\]
is $C^1$-dense in the space of smooth loops in $Z$ based at $x$, where $N’_{\alpha}$ is the ball obtained by shrinking $N_{\alpha}$ to half of its radius. We fix 
\[
C_{\alpha}:=\sup \{K_{n, \alpha} : 0 \leq n \leq \alpha \},
\]
where $K_{n, \alpha}$ is the constant for $q_{\alpha}$ in \autoref{p3.2} with $j=k$, and a family of cut-off functions $\beta_{\alpha}: Z \to \R$ with $\supp \beta_{\alpha} \subset N_{\alpha}$ and $\beta_{\alpha}|_{N’_{\alpha}} \equiv 1$. 

\begin{dfn}\label{d3.3}
We denote by $\mathcal{P}_1$ the space of all families $\pmb{\omega}$ so that $\sum_{\alpha} C_{\alpha} \|\omega_{\alpha}\|_{C^k}$ is convergent. For each $\pmb{\omega} \in \mathcal{P}_1$, we define the holonomy perturbation 
\[
\sigma_{\pmb{\omega}}= \sum_{\alpha} \beta_{\alpha} V_{q_{\alpha}, \omega_{\alpha}}. 
\]
\end{dfn}

It’s clear that $\mathcal{P}_1$ is a Banach space with respect to the norm $\|\pmb{\omega}\| := \sum_{\alpha} C_{\alpha} \|\omega\|_{C^k}$. Our convergence requirement implies that $\sum_{\alpha} \beta_{\alpha} V_{q_{\alpha}, \omega_{\alpha}}$ converges uniformly in the $C^n$-topology of any compact domain of $M \cup [0, 1] \times Y \subset Z$ for each fixed $n$. Then it follows from \autoref{p3.2} that $\sigma_{\pmb{\omega}}$ defines a smooth $\G_{k+1, loc}$-equivariant map
\[
\sigma_{\pmb{\omega}}: \A_{k, loc} \longrightarrow L^2_{k, loc}(Z, \Lambda^+ \otimes \SU(2)). 
\] 
Due to our choices of submersions $q_{\alpha}$, we know $\supp \sigma_{\pmb{\omega}}(A) \subset M \cup [0, 1] \times Y$. 

\subsection{Perturbations over Cylindrical Ends}

To perturb the ASD equation over the end $[0, \infty) \times Y$, we need to recall the notion of ‘cylinder function’ on $\A_l(Y)$. Such construction appeared originally in the work of Floer \cite{F88}. We mainly follow the exposition in \cite[Section 3.2]{KM11}. 

Suppose we have an $r$-tuple $\pmb{\rho} = (\rho_1, ..., \rho_r)$ of smooth immersions $\rho_i: S^1 \times D^2 \to Y$ which coincide over $[-\epsilon, \epsilon] \times D^2$ for some small constant $\epsilon$ independent of $i$, i.e. 
\begin{equation}
\rho_i(s, z) = \rho_j(s, z), \quad \text{ for all } s \in [-\epsilon, \epsilon], \; z \in D^2,
\end{equation}
where $s$ is the coordinate of $S^1$ parametrized by $e^{i2\pi s}$. Similarly we write $\rho_{i, z}: S^1 \to Y$ for the loop given by $\rho_i(-, z)$. Let $\eta: SU(2)^r \to \R$ be a smooth function which is invariant under the diagonally $SU(2)$-adjoint action on $SU(2)^r$. We also fix a non-negative $2$-form $\nu \in \Omega^2(D^2)$ supported near the center of $D^2$ so that $\int_{D^2} \nu = 1$. 

\begin{dfn}
Given $(\pmb{\rho}, \eta, \nu)$ as above, we define
\[
\begin{split}
\tau_{\pmb{\rho}, \eta}: \A_l(Y) & \longrightarrow \R \\
B & \longmapsto \int_{D^2} \eta\left(\Hol_{\rho_{1, z}}(B), ..., \Hol_{\rho_{r, z}}(B)\right) \nu
\end{split}
\]
as the associated cylinder function over $\A_l(Y)$. 
\end{dfn}

To relate the cylinder functions to the ASD equation, we consider its gradient with respect to the $L^2$-inner product on $L^2_l(Y, T^*Y \otimes \SU(2))$. The computation was carefully explained in \cite{D02, KM11}. We denote by $\partial_i \eta$ the $i$-th partial derivative of $\eta$. After trivializing the tangent bundle of $SU(2)$ via left translation and identifying the dual $\SU(2)^*$ with $\SU(2)$ via the Killing form, one can regard this map as
\[
\partial_i \eta: SU(2)^r \longrightarrow \SU(2).
\]
We now extend the $\SU(2)$-valued functions $\partial_i \eta(\Hol_{\rho_z}B)$ over $D^2$ to a function $H_i(B): S^1 \times D^2 \to \SU(2)$ via parallel transport under $B$. As mentioned in \cite{KM11}, the functions $H_i(B)$’s do not have to be continuous. We also extend the $2$-form $\nu$ to an $S^1$-invariant $2$-form on $S^1 \times D^2$, still denoted by $\nu$. The the formal gradient of the cylinder function $\tau$ can be computed as (cf. \cite[Page 876]{KM11})
\begin{equation}\label{e3.3}
\grad \tau(B) = * \left(\sum_{i=1}^r \rho_{i,*} (H_i(B) \otimes \nu) \right) \in \Omega^1(Y, \SU(2)). 
\end{equation}
Due to the $SU(2)$-invariance of $\eta$, the sum in (\ref{e3.3}) defines at least a continuous $\SU(2)$-valued $1$-form. It turns out that $\grad \tau(B) \in L^2_l(Y, T^*Y \otimes \SU(2))$. We record the key estimates below, which can be derived in a similar fashion as \autoref{l3.1} and \autoref{p3.2}. 

\begin{lem}[{\cite[Proposition 3.5]{KM11}}]\label{l3.5}
Let $\tau$ be a cylinder function as above, and $b = B-\Gamma_0 \in L^2_l(Y, T^*Y\otimes \SU(2))$ the connection form of a general connection $B$. Then we have the following estimates. 
\begin{enumerate}
\item There exists a constant $K’_0$ so that 
\[
\|\grad \tau(B)\|_{C^0} \leq K’_0. 
\]
\item For each $1 \leq j \leq l$, there exists a constant $K’_j$ so that 
\[
\|\grad \tau(B)\|_{L^2_j} \leq K’_j\left(1+ \|b\|^j_{L^2_j} \right). 
\]
\item For each positive integer $n \geq 1$ and $0 \leq j \leq l$, there exists a constant $K’_{n,j}$ so that 
\[
\|D^n \grad\tau|_B(b_1, ..., b_n)\|_{L^2_j} \leq K’_{n, j}\left(1+\|b\|^j_{L^2_j} \right)\prod_{i=1}^n \|b_i\|_{L^2_j}.
\]
\end{enumerate}
All constants appeared above are independent of the connection $B$. 
\end{lem}

Now we fix a countable family of cylinder functions $\pmb{\tau}=\{\tau_{\alpha}\}_{\alpha \in \N}$ satisfying the following.
\begin{enumerate}
\item[\upshape (i)] For each integer $r>0$, there is a countable sub-family of $r$-tuples of smooth immersions $\pmb{\rho}_{\alpha}$ corresponding to $\tau_{\alpha}$ that are dense in the space of all $r$-tuples of smooth immersions satisfying (\ref{e3.3}) with respect to $C^1$-topology. 
\item[\upshape (ii)] For each integer $r>0$, there is a countable sub-family of smooth $SU(2)$-invariant functions $\eta_{\alpha}: SU(2)^r \to \R$ corresponding to $\tau_{\alpha}$ so that this family is dense in the space of all smooth $SU(2)$-invariant functions on $SU(2)^r$ with respect to $C^{\infty}$-topology. 
\end{enumerate}
Similarly as before, we fix $C’_{\alpha}:=\sup\{K’_{n, \alpha}: 0\leq n \leq \alpha\}$, where $K’_{n, \alpha}$ is a constant given by \autoref{l3.5} corresponding to the cylinder function $\tau_{\alpha}$. We denote by $\mathcal{P}_2$ the space of all real sequences $\pmb{\pi}=\{\pi_{\alpha}\}_{\alpha \in \N}$ so that $\sum_{\alpha} C’_{\alpha} |\pi_{\alpha}|$ is convergent, which form a Banach space with respect to the norm $\|\pmb{\pi}\| = \sum_{\alpha} C’_{\alpha} |\pi_{\alpha}|$. Finally, we fix a smooth function on $\delta: [-1, \infty) \to \R$ so that $\delta|_{[-1, -1+\epsilon)} \equiv 0$ for some small $\epsilon > 0$, and $\delta(t) = e^{-\mu t}$, $t \in [0, \infty)$, for some fixed positive number $\mu > 0$.

\begin{dfn}\label{d3.6}
Given such choices of $\pmb{\tau}$, $\pmb{\pi}$, and $\delta$ as above, we associate an $\SU(2)$-valued self-dual $2$-form to each connection $A \in \A_k(Z)$ by 
\[
\sigma_{\pmb{\pi}}(A):= \delta(t) \cdot \left(dt \wedge \sum_{\alpha} \pi_{\alpha} \grad \tau_{\alpha}(A|_{\{t\} \times Y}) \right)^+ 
\]
\end{dfn}

Due to \autoref{l3.5}, the sum $\tau_{\pmb{\pi}} := \sum_{\alpha} \pi_{\alpha} \tau_{\alpha}$ defines a smooth $SU(2)$-invariant function on $\A_l(Y)$. We can write the holonomy perturbation alternatively as $\sigma_{\pmb{\pi}} = \delta \cdot (dt \wedge \grad \tau_{\pmb{\pi}})^+$. Analogous to \autoref{l3.5}, we have the following estimates for $\sigma_{\pmb{\pi}}$. 

\begin{lem}\label{l3.7}
Let $\pmb{\pi} \in \mathcal{P}_2$, $a = A-A_0 \in L^2_{k, loc}(Z, T^*Z \otimes \SU(2))$ the connection $1$-form for a general connection $A \in \A_k(Z)$, and $I = (t_1, t_2) \subset [0, \infty)$ a sub-interval of finite length. Then we have the following estimates.
\begin{enumerate}
\item There exists a constant $K''_0$ so that 
\[
\| \sigma_{\pmb{\pi}}(A) \|_{C^0(I\times Y)} \leq e^{-\mu t_1}  K''_0 \|\pmb{\pi}\|.  
\]
\item For each $1 \leq j \leq k$, there exists a constant $K''_j$ so that 
\[
\|\sigma_{\pmb{\pi}}(A)\|_{L^2_j(I\times Y)} \leq e^{-\mu t_1} K''_j \|\pmb{\pi}\| \left(1+ \|a\|^j_{L^2_j(I \times Y)} \right). 
\]
\item For each positive integer $n \geq 1$ and $0 \leq j \leq k$, there exists a constant $K''_{n,j}$ so that 
\[
\|D^n \sigma_{\pmb{\pi}}|_A(a_1, ..., a_n)\|_{L^2_j(I\times Y)}\leq e^{-\mu t_1}K''_{n, j} \|\pmb{\pi}\| \cdot  P_j(a),
\]
where $P_j(a) = \left(1+\|a\|^j_{L^2_j(I\times Y)} \right)\prod_{i=1}^n \|a_i\|_{L^2_j(I\times Y)}$.
\end{enumerate}
All constants appeared above are independent of the connection $A$ and the length of $I$. 
\end{lem}

\begin{proof}
All the estimates follow from \cite[Proposition 3.15]{KM11} and the Leibniz rule applied to the product of $\delta(t) = e^{-\mu t}$ with $(dt \wedge \grad \tau_{\pmb{\pi}})^+$. 
\end{proof}

As a consequence of \autoref{l3.7}, the holonomy perturbations in $\mathcal{P}_2$ define $\G_{k+1, loc}$-equivariant maps 
\[
\sigma_{\pmb{\pi}} : \A_{k, loc} \longrightarrow L^2_{k, loc}(Z, \Lambda^+ \otimes \SU(2)),
\]
with $\supp \sigma_{\pmb{\pi}} \subset [0, \infty) \times Y$. 

Finally, we write $\mathcal{P}_{\mu} := \mathcal{P}_1 \oplus \mathcal{P}_2$ for the Banach space of perturbations arising as the direct sum of both types (the subscript $\mu$ here is to emphasize that the decay rate along the end is $e^{-\mu t}$). We shall write $\sigma$ for a general element in $\mathcal{P}_{\mu}$, which by definition takes the form $\sigma_{\pmb{\omega}} + \sigma_{\pmb{\pi}}$. For the ease of notations, we simply write $\|\sigma\|_{\mathcal{P}}$ for the norm $\|\pmb{\omega}\| + \|\pmb{\pi}\|$. Then each $\sigma \in \mathcal{P}_{\mu}$ defines a smooth, gauge-equivariant map from the space of connections to the space of $\SU(2)$-valued self-dual $2$-forms. 

\section{Gauge Fixing and Regularity}\label{s4}

In this section, we discuss the choice of gauge fixing and regularity of perturbed ASD connections under this gauge. With the current definition of the holonomy perturbation, the regularity involves a major difference from the metric perturbed case. Following the idea of Uhlenbeck \cite{U82}, regularity of ASD connections is improved via the bootstrapping argument on small balls where the energy is sufficiently small. However, holonomy perturbations only admit a uniform $C^0$-bound that is independent of the locally chosen gauge. So one cannot expect to gain regularity via a local argument, while gauge-fixing for $SU(2)$-connections is always local due to its non-abelian nature. Due to this reason, we have required an exponential decay on the perturbations. Nevertheless, it stills weakens some statements in \cite{MMR}. So the main purpose of this section is to justify their statements in our setting under certain modifications. For the sake of clarity and completeness in exposition, we also rewrite some parts of the proofs. We shall first introduce the Morgan--Mrowka--Ruberman gauge fixing, then derive regularity estimates for perturbed ASD connections. 

\subsection{The Morgan--Mrowka--Ruberman Gauge Fixing}
We recall that each connection $A$ on the cylinder $[0, \infty) \times Y$ takes the form 
\[
A= B(t) + \beta(t)dt, 
\]
where $B(t) = A|_{\{t\} \times Y}$ is the restriction of the connection $A$ to a cross-section $\{t\} \times Y$, and $\beta(t) \in L^2_l(Y, \SU(2))$ is a time-dependent $\SU(2)$-valued function on $Y$. Then
\begin{equation}
F^+_A=\frac{1}{2}\left(*(*F_{B}+\dot{B}-d_B\beta) + dt \wedge (*F_B+\dot{B} - d_B \beta)\right),
\end{equation}
where $*$ means the Hodge star on the $3$-manifold $Y$. Thus the ASD equation on the cylindrical end is equivalent to the equation
\begin{equation}\label{e4.2}
\dot{B}=-*F_B+d_B \beta. 
\end{equation}

If one chooses the temporal gauge, i.e. trivialize the bundle $E|_{[0, \infty) \times Y}$ along the $t$-direction via parallel transport of $A$, then we can write $A = B(t) + dt$. In this way, the ASD equation (\ref{e4.2}) agrees with the downward gradient flow equation of the Chern--Simons functional as in (\ref{e2.11}). However, temporal gauge usually does not interact well with elliptic estimates since we don’t have any control of $B(t)$. To this end, another gauge-fixing choice was introduced in \cite{MMR}.

Let $\Gamma \in \A(Y)$ be a flat connection. We write 
\[
\mathcal{K}_{\Gamma}:= \ker d_{\Gamma}^*\subset L^2_l(Y, T^*Y \otimes \SU(2)) \text{ and } \mathcal{S}_{\Gamma}:= \Gamma+ \mathcal{K}_{\Gamma} \subset \A(Y).
\]
We refer to $\mathcal{S}_{\Gamma}$ as a slice at $\Gamma$. An $\Stab(\Gamma)$-invariant open $L^2_j$-neighborhood $U_{\Gamma}$ of $\Gamma$ in $\mathcal{S}_{\Gamma}$ is called a local slice $L^2_j$-neighborhood. In most cases, we shall work with $j=l$, and will simply call $U_{\Gamma}$ a local slice neighborhood unless otherwise noted. 

We remark that the isotropy group $\Stab(\Gamma) \subset SU(2)$ acts on $\mathcal{S}_{\Gamma}$ in the following way. We identify $L^2_l(Y, T^*Y \otimes \SU(2))$ with the tangent space $T_{\Gamma} \A(Y)$ which is preserved under the action of $\Stab(\Gamma)$ given by differentiating gauge transformations. Explicitly, an element $u \in \Stab(\Gamma)$ acts on $L^2_l(Y, T^*Y \otimes \SU(2))$ by $u \cdot b = ubu^{-1}$. Since $d_{\Gamma} u = 0$, we conclude that $d^*_{\Gamma} (u\cdot b) = 0$ if $d^*_{\Gamma} b = 0$. 

\begin{dfn}[{\cite[Definition 2.4.2]{MMR}}]\label{d4.1}
Let $I \subset [0, \infty)$ be a sub-interval. We say a connection $A=B(t)+ \beta(t)dt$ on $I \times Y$ is in standard form with respect to $\Gamma$ if for all $t \in I$ one has 
\[
B(t) \in \mathcal{S}_{\Gamma} \text{ and } \beta(t) \in (\ker \Delta_{\Gamma})^{\perp},
\]
where $\Delta_{\Gamma}:=d^*_{\Gamma}d_{\Gamma}: L^2_l(Y, \SU(2)) \to L^2_{l-2}(Y, \SU(2))$ is the Laplacian twisted by $\Gamma$. 
\end{dfn}

Let us write $\cs_{\Gamma}:=\cs|_{\mathcal{S}_{\Gamma}}$ for the restriction on the Chern--Simons functional on the slice $\mathcal{S}_{\Gamma}$. If one uses the metric on $\mathcal{S}_{\Gamma}$ given by the $L^2$-inner product on $\mathfrak{su}(2)$-valued $1$-forms, the gradient of $\cs_{\Gamma}$ at $B$ is given by 
\[
\grad_{\Gamma} cs_{\Gamma}(B) = \Pi_{\Gamma} (*F_B),
\]
where $\Pi_{\Gamma}: L^2_l(Y, T^*Y \otimes \SU(2)) \to \ker d^*_{\Gamma}$ is the $L^2$-orthogonal projection. So the downward gradient flow equation of $\cs_{\Gamma}$ does not agree with the ASD equation (\ref{e4.2}) in general. To compensate for this issue, Morgan--Mrowka--Ruberman \cite[Section 2.3]{MMR} considered a different metric on $\mathcal{S}_{\Gamma}$ locally near $\Gamma$. Let $U_{\Gamma}$ be a local slice neighborhood of $\Gamma$. By choosing $U_{\Gamma}$ sufficiently small, we can guarantee that the operator $d_B: (\ker \Delta_{\Gamma})^{\perp} \to L^2_{l-1}(Y, T^*Y\otimes \SU(2))$ is injective for all $B \in U_{\Gamma}$. We denote by 
\[
\mathcal{K}’_B := \left( d_B\left((\ker \Delta_{\Gamma})^{\perp} \right) \right)^{\perp}.
\]
the complement of the image of $(\ker \Delta_{\Gamma})^{\perp}$ under $d_B$. Since $\mathcal{K}_B = \ker d^*_B= (\im d_B)^{\perp}$, we see that $\mathcal{K}_B \subset \mathcal{K}’_B$. When $\Gamma$ is irreducible, it follows from the fact $\ker \Delta_{\Gamma} = 0$ that $\mathcal{K}_B = \mathcal{K}’_B$. Denote by $\Pi^{\Gamma}_B: \mathcal{K}_{\Gamma} \to \mathcal{K}’_B$ the projection. The local metric on $U_{\Gamma}$ is defined to be 
\begin{equation}
\langle b_1, b_2 \rangle_{\Gamma} = \langle \Pi^{\Gamma}_B(b_1), \Pi^{\Gamma}_B(b_2) \rangle_{L^2},
\end{equation} 
where $b_1, b_2 \in T_B U_{\Gamma} \simeq \mathcal{K}_{\Gamma}$. We call this metric $\langle - , - \rangle_{\Gamma}$ the (local) slice metric at $\Gamma$. This local slice metric is commensurate with the usual metric on $U_{\Gamma}$ given by the $L^2$-inner product. 

\begin{lem}\label{l4.2}
One can choose $U_{\Gamma}$ sufficiently small so that 
\[
\frac{1}{2} \|b’\|^2_{L^2} \leq \|b’\|^2_{\Gamma} \leq 2 \|b’\|^2_{L^2}
\]
for all $b’ \in T_B U_{\Gamma}$ and $B \in U_{\Gamma}$. 
\end{lem}

\begin{proof}
At $\Gamma$, the slice metric agrees with the $L^2$-inner product. The result then follows immediately from the smoothness of the slice metric. Alternatively, one can think of $d_B\left( (\ker \Delta_{\Gamma})^{\perp}\right)$ as the graph of a bounded linear map $m_B: d_{\Gamma} \left( (\ker \Delta_{\Gamma})^{\perp}\right) \to \ker d^*_{\Gamma}$. The operator norm $\|m_B\|$ depends continuously on $B$ and vanishes when $B = \Gamma$. Choosing $U_{\Gamma}$ so that $\|m_B\| \leq 1$ for all $B \in U_{\Gamma}$ gives us the desired bound on the metrics. 
\end{proof}

Since the operator $d_B$ is $\Stab(\Gamma)$-invariant in the sense that 
\[
d_{u \cdot B} (u \cdot b’) = u (d_B b’ ) u^{-1}, \quad u \in \Stab(\Gamma) \text{ and } b’ \in \mathcal{K}_{\Gamma}.  
\]
We conclude that the slice metric is also $\Stab(\Gamma)$-invariant. We shall write $\grad_{\Gamma} \cs_{\Gamma}$ for the gradient of $\cs_{\Gamma}$ computed with respect to the slice metric. The strength of this metric is realized in the following lemma. 

\begin{lem}[{\cite[Lemma 2.5.1]{MMR}}]\label{l4.3}
Let $\Gamma$ be a flat connection on $E'$. Then there exists a local slice neighborhood $U_{\Gamma}$ and a unique smooth $\Stab(\Gamma)$-equivariant map $\Theta: U_{\Gamma} \to L^2_l(Y, \SU(2))$ such that for all $B \in U_{\Gamma}$ one has 
\[
*F_B - d_B (\Theta(B))  \in \mathcal{K}_{\Gamma} \text{ and } \Theta(B) \in (\ker \Delta_{\Gamma})^{\perp}.
\]
Furthermore the map $\Theta$ has the following properties. Let $B=\Gamma+b \in U_{\Gamma}$ be a connection in the slice neighborhood of $\Gamma$.
\begin{enumerate}
\item[\upshape (i)] The gradient of $\cs_{\Gamma}$ at $B$ with respect to $\langle - , - \rangle_{\Gamma}$ is 
\[
\grad_{\Gamma} \cs_{\Gamma} (B) = *F_B - d_B (\Theta(B)).
\]
\item[\upshape (ii)] One has the estimate:
\[
\|\Theta(B)\|_{L^2} \leq \frac{1}{2} \|\grad_{\Gamma} \cs_{\Gamma} (B)\|_{L^2}.
\]
\end{enumerate}
\end{lem}

\begin{proof}
The proof is given originally in \cite{MMR}. To emphasize the role of $\Stab(\Gamma)$-equivariance, we rewrite part of the proof. 

Let’s consider the smooth map 
\begin{equation}
\begin{split}
\eta: \mathcal{K}_{\Gamma} \times (\ker \Delta_{\Gamma})^{\perp} &\longrightarrow \im d^*_{\Gamma}\\
(b, \xi) & \longmapsto d^*_{\Gamma}(*F_B - d_B \xi)
\end{split}
\end{equation}
where $B=\Gamma+b$, and $\im d^*_{\Gamma} \subset L^2_{l-2}(Y, \SU(2))$. The partial derivative of $\eta$ with respect to its second factor is 
\[
D_2\eta|_{(0,0)} = -\Delta_{\Gamma}: L^2_l \cap (\ker \Delta_{\Gamma})^{\perp} \to L^2_{l-2} \cap \im d^*_{\Gamma},
\]
which is a $\Stab(\Gamma)$-equivariant isomorphism. Since $\eta|_{\mathcal{K}_{\Gamma} \times\{0\}}$ is $\Stab(\Gamma)$-equivariant, the implicit function theorem then implies that one can find a $\Stab(\Gamma)$-invariant neighborhood $U_{\Gamma}$ of $\mathcal{K}_{\Gamma}$ around the origin and a unique $\Stab(\Gamma)$-equivariant map $\Theta: U_{\Gamma} \to (\ker \Delta_{\Gamma})^{\perp}$ so that 
\[
\eta(b, \Theta(b)) = 0,
\]
which is equivalent to $*F_B - d_B(\Theta(B)) \in \mathcal{K}_{\Gamma}$ with $B = \Gamma+b$. 

The computation of the gradient $\grad_{\Gamma} \cs_{\Gamma}$ was carried out in \cite[Lemma 2.5.1]{MMR}. The point is to establish 
\[
\langle *F_B, b’ \rangle = \langle *F_B - d_B(\Theta(B)), b’ \rangle_{\Gamma}, \quad b’ \in \mathcal{K}_{\Gamma}. 
\]

Finally, we explain why the estimate in (ii) can be achieved. It was proved in \cite[Lemma 2.5.1]{MMR} that 
\begin{equation}\label{e4.5}
\|\Theta(B)\|_{L^2_{7/4}} \leq K_0 \|b\|_{L^2_{3/2}} \|\grad_{\Gamma} \cs_{\Gamma}(B)\|_{L^2},
\end{equation}
where the constant $K_0$ only depends on $U_{\Gamma}$. Actually, the dependence comes from the operator norm of the inverse of the operator
\[
d^*_{\Gamma}d_B: (\ker \Delta_{\Gamma})^{\perp} \cap L^{7/4}_2 \longrightarrow \im d^*_{\Gamma} \cap L^{7/4}, \quad B \in U_{\Gamma}. 
\]
In particular, we can first choose $U’_{\Gamma}$ sufficiently small so that $\|(d^*_{\Gamma}d_B)^{-1}\|$ on $ (\ker \Delta_{\Gamma})^{\perp} \cap L^{7/4}_2$ is uniformly bounded for all $B \in U’_{\Gamma}$. The choice of $U’_{\Gamma}$ provides us an constant $K’_0$. Then we can choose $U_{\Gamma} \subset U’_{\Gamma}$ so that $\|b\|_{L^2_{3/2}} \leq 1/2K’_0$ for all $\Gamma+b \in U_{\Gamma}$, since $l > 3/2$ from our assumption. With this choice of $U_{\Gamma}$, (\ref{e4.5}) implies the estimate in (ii). 
\end{proof}

\subsection{Regularity Estimates}

Now we add the holonomy perturbations to this gauge-fixing picture. Combined with the anti-self-duality condition, we can get various regularity estimates. 

Concerned with the cylindrical end, we may choose perturbations $\sigma = \sigma_{\pmb{\pi}} \in \mathcal{P}_2$. Given $A \in \A_{k, loc}(Z)$, we write
\begin{equation}\label{e4.6}
\sigma(A)|_{\{t \} \times Y} = *\rho_A(t) +dt \wedge \rho_A(t),
\end{equation}
where $2\rho_A(t) = e^{-\mu t} \grad \tau_{\pmb{\pi}} (A|_{\{t\} \times Y})$ is a smooth path of $\SU(2)$-valued $1$-form on $Y$. Suppose $A|_{[0, \infty) \times Y} = B(t) +\beta(t) dt $ is in standard form with respect to $\Gamma$. The perturbed ASD equation restricted to the end $[0, \infty) \times Y$ reads as 
\begin{equation}\label{e4.7}
\dot{B}(t)=-*F_{B(t)} +d_B \beta(t) + 2\rho_A(t). 
\end{equation}
The estimates in \autoref{l3.7} for $\sigma \in \mathcal{P}_2$ implies that 
\begin{equation}
\|\rho_A(t)\|_{C^0} \leq c. \|\sigma\|_{\mathcal{P}} e^{-\mu t},
\end{equation}
where the constant depends on neither $A$ nor $\sigma$. 

\begin{lem}\label{l4.4}
Let $\Gamma$ be a flat connection on $E’$, $U_{\Gamma} \subset \mathcal{S}_{\Gamma}$ a neighborhood of $\Gamma$ provided by \autoref{l4.3}. Suppose $A$ is a perturbed ASD connection on $E$ such that $A|_{I \times Y}$ is in standard form with respect to $\Gamma$ and $B(t) \in U_{\Gamma}$ for all $t \in I$. Then 
\[
\|\beta(t) - \Theta(B(t))\|_{C^1} \leq c. \|\sigma\|_{\mathcal{P}} e^{-\mu t},
\]
where the constant depends only on $\Gamma$ and $U_{\Gamma}$. 
\end{lem}

\begin{proof}
Using (i) of \autoref{l4.3} and (\ref{e4.7}), the perturbed ASD equation on $I \times Y$ reads as  
\[
\dot{B} = -\grad_{\Gamma} \cs_{\Gamma}(B) + d_B\big(\beta(t) - \Theta(B(t)) \big) + 2\rho_A(t).
\]
By construction $\beta(t) - \Theta(B(t)) \in (\ker \Delta_{\Gamma})^{\perp}$ and $\dot{B} + \grad_{\Gamma} \cs_{\Gamma}(B) \in \ker d^*_{\Gamma}$. Note that over $Y$ one has $L^4_2 \hookrightarrow C^1$ on $Y$. Since $d^*_{\Gamma}d_B: L^4_{2} \cap (\ker \Delta_{\Gamma})^{\perp} \to L^4 \cap \im d^*_{\Gamma}$ is uniformly invertible for $B$ near $\Gamma$, we have 
\[
\begin{split}
\|\beta(t) - \Theta(B(t))\|_{C^1} &\leq c. \|\beta(t) - \Theta(B(t))\|_{L^4_2} \\
& \leq c. \|d^*_{\Gamma} d_B\left( \beta(t) - \Theta(B(t))\right)\|_{L^4} \\
& \leq c. \|d^*_{\Gamma}\rho_A(t)\|_{L^4}.
\end{split}
\]
Recall that $2\rho_A(t) = e^{-\mu t} \grad \tau_{\pmb{\pi}} (B(t))$. Using the embedding $L^2_l \hookrightarrow L^4_1$ for $l \geq 5/2$,  (2) of \autoref{l3.5} implies that 
\[
\|d^*_{\Gamma} \rho_A(t)\|_{L^4}  \leq c. \|\rho_A(t)\|_{L^4_1}
 \leq c. \|\rho_A(t)\|_{L^2_l} 
 \leq c. \|\sigma\|_{\mathcal{P}} \cdot e^{-\mu t},
\]
where the constant depends only on $\Gamma$ and $U_{\Gamma}$. 
\end{proof}

To simplify notations, we write
\begin{equation}
p_A(t) := d_B \big( \beta(t) - \Theta(B(t)) \big) + 2\rho_A(t). 
\end{equation}
Then an ASD connection $A=B(t) + \beta(t) dt$ in standard form satisfies
\begin{equation}\label{pas}
\dot{B}(t)= - \grad_{\Gamma} \cs_{\Gamma}(B(t)) + p_A(t),
\end{equation}
where the extra perturbation term admits the bound 
\begin{equation}\label{e4.11}
\|p_A(t)\|_{C^0} \leq \mathfrak{c}_0 \|\sigma\|_{\mathcal{P}} e^{-\mu t}
\end{equation}
for some constant $\mathfrak{c}_0> 0$ independent of $A$ and $\sigma$. Since the $C^0$-norm of the perturbation $p_A(t)$ is gauge-invariant, $\mathfrak{c}_0$ only depends on the gauge equivalence class of the flat connection $\Gamma$ and its neighborhood $U_{\Gamma}$. Since the $SU(2)$-representation variety of $Y$ is compact, the constant $\mathfrak{c}_0$ can be chosen to be independent of $\Gamma$. Moreover, we see that the unperturbed ASD equation restricts on a cylinder as the downward gradient flow equation of the Chern--Simons functional with respect to the Morgan--Mrowka--Ruberman gauge fixing condition and the local slice metric. 

Analogous to the unperturbed case, we can use the curvature on a tube $I \times Y$ to control the curvature on a cross-section $\{t \} \times Y$. The following lemma is crucial in the proof of the existence of asymptotic value of a perturbed gradient flowline. 

\begin{lem}\label{l4.5}
For any $\sigma$-perturbed ASD connection $A$ of finite energy on $E$, one can find $T_0 > 0$ so that for all $t > T_0$ one has 
\[
\| F_{B(t)} \|_{L^2} \leq \mathfrak{c}_1 \left( \|F_A\|_{L^2(I \times Y)} + \|\sigma\|_{\mathcal{P}} e^{-\mu t}\right) 
\]
where $B(t) = A|_{\{ t\} \times Y}$, $I = (t-1, t+1)$, and $\mathfrak{c}_1$ is a positive constant independent of $A$. 
\end{lem}

\begin{proof}
We first work locally over a geodesic four ball $D$ of $I \times Y$. Uhlenbeck's gauge fixing \cite{U82} tells us that whenever $\|F_A\|_{L^2(I \times Y)}$ is sufficiently small one can find a gauge transformation $g_D$ over $D$ of class $L^2_{k+1}$ so that 
\[
d^*a_{g_D} = 0, \quad  *a_{g_D}|_{\partial D} = 0, \quad \|a_{g_D}\|_{L^2_1} \leq c. \|F_A\|_{L^2(D)},
\]
where $a_{g_D} = g_D \cdot A - A_0$ is the connection $1$-form after applying the gauge transformation to $A$. Let $I'=[t-1/2, t+1/2]$. We may use finitely many such geodesic balls to cover $I' \times Y$. Then the gauge patching argument in \cite{W04} implies that one can find an $L^2_2$ gauge transformation, still denoted by $g_D$, on $I' \times Y$ together with a flat connection $A_D$ so that the connection form $g_D \cdot A - A_D$ admits a uniform $L^2_1$ bound over $I' \times Y$:
\begin{equation}
\|g_D \cdot A - A_D \|_{L^2_1(I' \times Y)} \leq c. \|F_A\|_{L^2(I \times Y)},
\end{equation}
where the constant is independent of $A$. Since the character variety of $I' \times Y$ is compact, one can find a constant $C_D > 0$ so that $\|A' - A_0\|_{L^2_1(I' \times Y)} \leq C_D$ for all flat connections $A'$, up to gauge transformations. So composing $g_D$ possibly with a further gauge transformation corresponding to $A_D$, we have 
\begin{equation}
\| g_D \cdot A - A_0\|_{L^2_1(I' \times Y)} \leq c. \left(\|F_A\|_{L^2(I \times Y)} + 1 \right),
\end{equation}
where the constant is still independent of $A$. Moreover, one can arrange that $d^*(g_D \cdot A - A_0)|_{D'} = 0$, where $D'$ stands for the concentric ball with half the radius of $D$. We may choose $D$ in the beginning so that $D' \subset I' \times Y$. Let us write $a_D = g_D \cdot A - A_0$. Over $D'$, the perturbed ASD equation takes the form 
\begin{equation}\label{e4.14}
d^+a_D + a_D \wedge a_D = \sigma(g_D \cdot A)|_{D'}.
\end{equation}
When $\|F_A\|_{L^2(I \times Y)}$ is sufficiently small, the standard bootstrapping argument (cf. \cite[Theorem 2.3.8]{DK90}) implies that over a smaller ball $D''\Subset D'$ one has 
\[
\begin{split}
\|a_D\|_{L^2_2(D'')} & \leq c. \left(\|F_A\|^2_{L^2(D')} + \|\sigma(g_D   \cdot A)\|_{L^2_1(I' \times Y)} \right)  \\
& \leq c. \left( \|F_A\|^2_{L^2(D')} + \|\sigma\|_{\mathcal{P}}  e^{-\mu t} \|a_D\|_{L^2_1(I' \times Y)} \right) \\
& \leq c. \left(\|F_A\|_{L^2(I \times Y)} + \|\sigma\|_{\mathcal{P}} e^{-\mu t} \right). 
\end{split}
\]
where the second inequality follows from estimates in \autoref{l3.7}, and we have absorbed the higher order terms of $\|F_A\|_{L^2}$ to a linear term since It is small.  

Now let $S''_t =  D'' \cap \{t\} \times Y$. Then it follows from the equation $F_{B(t)} = db(t) + b(t) \wedge b(t)$ that 
\[
\|F_{B(t)}\|_{L^2(S''_t)} \leq c. \left( \|b(t)\|_{L^2_1(S''_t)} + \|b(t)\|^2_{L^2_1(S''_t)} \right),
\]
where we have used the Sobolev multiplication $L^2_1 \times L^2_1 \hookrightarrow L^2$. Note that the Sobolev restriction gives us 
\[
\|b(t)\|_{L^2_1(S''_t)} \leq c. \|a_D\|_{L^2_2(D'')}. 
\]
Absorbing higher order terms to linear terms, we conclude that 
\[
\|F_{B(t)}\|_{L^2(S''_t)} \leq c. \left(\|F_A\|_{L^2(I \times Y)} + \|\sigma\|_{\mathcal{P}} e^{-\mu t}\right),
\]
where the constant is independent of $A$. The statement now follows once we cover $\{t\} \times Y$ with finitely many such geodesic balls $D''$. 
\end{proof}

\begin{rem}\label{r4.6}
In \cite[Lemma 3.5.1]{MMR}, the corresponding result was stated for $\|F_{B(t)}\|_{L^2_1}$. The author thinks such an estimate cannot be achieved even in the unperturbed case. The reason lies on the fact that the $L^2_1$-norm is not gauge-invariant, while the right-hand side of the inequality in the statement is gauge-invariant. So if one applies gauge transformations with large derivatives, the $L^2_1$-norm of $F_{B(t)}$ can blow up unless $B(t)$ is flat. As a consequence, the corresponding argument in \cite[Theorem 4.3.1]{MMR}, which establishes the existence of the asymptotic map, requires some slight improvement which will be covered later in the proof of our case. 
\end{rem}

We postpone the argument for achieving Morgan--Mrowka--Ruberman gauge fixing to the next section. Here we emphasize an intermediate step. Suppose for each $B(t) = A|_{\{ t\} \times Y}$ with $t$ fixed, we can find a gauge transformation $u(t)$ over $Y$ so that $u(t) \cdot B(t) - \Gamma \in \ker d^*_{\Gamma}$ for some fixed flat connection $\Gamma$. We wish to patch these gauge transformations together to get an $L^2_{k+1}$ gauge transformation $g$ on $I \times Y$ so that $g \cdot A$ is in standard form with respect to $\Gamma$. Besides the standard patching argument (cf. \cite{DK90, W04}), we still need to deal with the regularity of the path $u(t)$ and $g \cdot A$ as discussed in \cite[Chapter 3]{MMR}. The idea is to first solve the problem with a slightly weaker regularity, then to bootstrap the perturbed ASD connection to get back the lost regularity. The rest of this section is devoted to proving the following result, which corresponds to \cite[Theorem 2.6.3]{MMR}. 

\begin{prop}\label{p4.7}
Let $\Gamma$ be a smooth flat connection on $E’$. Then one can find a sufficiently small local slice neighborhood $U_{\Gamma}$ of $\Gamma$ with the following satisfied. Suppose $A$ is a perturbed ASD $L^2_{k, loc}$-connection on $E$ whose restriction $B(t) = A|_{\{t\} \times Y}$ is $L^2_{l+1}$ gauge equivalent to a connection in $U_{\Gamma}$ for all $t \in I$. Then one can find a sufficiently large constant $T > 0$ and $L^2_{k+1, loc}$ gauge transformations $g$ on $I’ \times Y$ for any $I’ \Subset I \cap [T, \infty)$ so that $g \cdot A|_{I’ \times Y}$  is in standard form with respect to $\Gamma$. Moreover, such a gauge transformation is unique up to composing with elements in $\Stab(\Gamma)$. 
\end{prop}

To prove \autoref{p4.7}, we briefly recall the anisotropic Sobolev spaces $L^2_{m, s}(I \times Y)$ discussed in \cite{H85}. Let’s first assume $m$ and $s$ are both non-negative integers. Then $L^2_{m, s}(I \times Y)$ is defined to be the completion of $C^{\infty}_0(I \times Y)$ with respect to the norm 
\begin{equation}
\|f \|_{L^2_{m, s}}:= \sum_{i+j \leq m+s, \; i \leq m} \| \partial^{(i)}_t \nabla^{(j)} f \|_{L^2},
\end{equation}
where $\partial^{(i)}_t$ means to take the $i$-th derivative along the $t$-direction, and $\nabla^{(j)}$ means to take the $j$-th derivative on $Y$. When $n$ or $m$ fails to be an integer, one can still define $L^2_{m, s}$ using Fourier transforms as Bessel potential spaces, see \cite[Appendix B]{H85} for more details. Besides the theorems of Sobolev embedding and multiplication, which can be found in \cite[Theorem 3.1.2]{MMR}, we record the restriction and extension theorem for later use.

\begin{thm}[{\cite[Theorem B1.11]{H85}}]\label{t4.8}
Suppose $m  > 1/2$. Then the restriction gives rise to a continuous map 
\[
r: L^2_{m, s}(I \times Y) \longrightarrow L^2_{m+s-1/2}( \{t\} \times Y),
\]
and there is a continuous extension map
\[
e: L^2_s(\{t\} \times Y) \longrightarrow L^2_{m, s-m+1/2}(I \times Y)
\]
so that $r \circ e = \id$. 
\end{thm} 

The following lemma is useful in patching gauge transformations together. Essentially, it’s part of the slice theorem stating that a neighborhood in $\ker d^*_{\Gamma} / \Stab(\Gamma)$ provides a local chart of $[\Gamma]$ in the quotient $\A/\G$. 

\begin{lem}\label{l4.9}
Let $\Gamma$ be a flat connection on $E’$. Then one can find a sufficiently small local slice neighborhood $U_{\Gamma}$ of $\Gamma$ so that for any connection $B \in U_{\Gamma}$ and $u \in \G_{l+1}$
\[
u \cdot B \in U_{\Gamma} \Longleftrightarrow u \in \Stab(\Gamma).
\]
\end{lem}

\begin{proof}
Let’s write $B = \Gamma + b$ and $u\cdot B = \Gamma + b_u$ with $b, b_u \in L^2_l(Y, T^*T \otimes \SU(2))$. Then $b_u = b - (d_B u)u^{-1}$ gives us 
\begin{equation}\label{e2.16}
d_{\Gamma} u = ub - b_u u
\end{equation}
Note that $u \in \Stab(\Gamma)$ is equivalent to $d_{\Gamma} u = 0$. So it’s clear that $b_u \in \ker d^*_{\Gamma}$ provided $u \in \Stab(\Gamma)$. 

Now we assume $b_u \in \ker d^*_{\Gamma}$. Then (\ref{e2.16}) tells us that 
\[
d^*_{\Gamma}d_{\Gamma} u  = -*(d_{\Gamma} u\wedge *b) + b_u \wedge *d_{\Gamma} u. 
\]
Since $l \geq 5/2$, the Sobolev multiplication tells us 
\[
\| \Delta_{\Gamma} u\|_{L^2_{l-1}} \leq c. \left(\|b\|_{L^2_{l-1/2}} + \|b_u\|_{L^2_{l-1/2}} \right) \|d_{\Gamma} u\|_{L^2_l}. 
\]
On the other hand, the elliptic estimate of $\Delta_{\Gamma}$ implies that 
\[
\|d_{\Gamma} u \|_{L^2_l} \leq c. \|\Delta_{\Gamma} u \|_{L^2_{l-1}}. 
\]
So we can choose $U_{\Gamma}$ sufficiently small, which gives small bound on $\|b\|_{L^2_l}$ and $\|b_u\|_{L^2_l}$ so that $\|d_{\Gamma} u\|_{L^2_{l-1}} = 0$. Thus $d_{\Gamma} u = 0$. 
\end{proof}

We first prove a weaker version of \autoref{p4.7}. 

\begin{lem}[{\cite[Lemma 3.1.3]{MMR}}]\label{l4.10}
Let $\Gamma$ be a smooth flat connection on $E’$ and $U_{\Gamma}$ a local slice neighborhood provided by \autoref{l3.7}. Suppose $A$ is a $L^2_{1, k-1, loc}$-connection over $I \times Y$ such that $A|_{\{t\} \times Y}$ is $L^2_{l+1}$ gauge equivalent to a connection in $U_{\Gamma}$ for all $t \in I$. Then there exists a $L^2_{1, k, loc}$ gauge transformation $g$ over $I \times Y$ so that $g \cdot A$ is in standard form with respect to $\Gamma$. Moreover, such a gauge transformation is unique up to composing with elements in $\Stab(\Gamma)$. 
\end{lem}

\begin{proof}
As explained in \cite{MMR}, the proof for the existence of $g$ consists of two parts. The first part is to find a $L^2_{1,k, loc}$ gauge transformation $g_1$ over $I \times Y$ so that $g_1 \cdot A|_{\{t\} \times Y} \in U_{\Gamma}$. The second part is to find $g_2$ so that $(g_2g_1) \cdot A$ solves the equation imposed by the assumption in \autoref{d4.1}. For the sake of completeness in exposition, we illustrate more details here.  

Let’s write $A=B(t) + \beta(t)dt$ and fix $t_o \in I$ so that $B_o:= u_o \cdot B(t_o) \in U_{\Gamma}$ for some $u_o \in \G_{l+1}(Y)$. \autoref{t4.8} provides us with a $g_o \in \G_{k+1}(I_o \times Y)$ so that $g_o|_{\{t_o\} \times Y} = u_o$ for some neighborhood $I_o$ of $t_o$. Denote by $u_o(t) = g_o|_{\{t\}\times Y}$, $B_o(t) := u_o(t) \cdot B(t)$, and $b_o(t)=B_o(t) - \Gamma$. Then we get a map 
\[
\begin{split}
\varphi: I_o \times (\ker \Delta_{\Gamma})^{\perp} \cap L^2_{l} &\longrightarrow \im d^*_{\Gamma} \cap L^2_{l-2} \\
(t, \xi) & \longmapsto d^*_{\Gamma}(b_o(t) - d_{B_o(t)} \xi)
\end{split}
\]
whose partial derivative along the second factor at $(t_o, 0)$ is an isomorphism $D_2\varphi|_{(t_o, 0)} = d^*_{\Gamma}d_{B_o}$. Since $A$ is a $L^2_{1, k-1}$-connection over $I_o \times Y$, it follows from \autoref{t4.8} that its restriction $A|_{\{t\} \times Y} = B(t)$ is an $L^2_1$-path of $L^2_{l-1}$-connections. Thus $\varphi$ is an $L^2_1$-map in its first factor, and a smooth map in its second factor. The implicit function theorem then gives us a neighborhood $I’_o \subset I_o$ of $t_o$ and a $L^2_1$-path $\xi(t)$ of $L^2_{l, loc}$ $0$-forms satisfying $e^{\xi(t)} \cdot B_o(t) \in U_{\Gamma}$ for $t \in I_o$. This $L^2_1$-path $\xi(t)$ combines as a $L^2_{1, k, loc}$ $0$-form $\zeta$ over $I’_o \times Y$. Setting $g = e^{\zeta}u_o$, then $(g \cdot A)|_{\{t\} \times Y} = e^{\xi(t)} \cdot B_o(t) \in U_{\Gamma}$ as desired. 

Now we need to patch together these local gauge transformations. Suppose we have two open intervals $I_1$ and $I_2$ over which there are $L^2_1$-paths of $L^2_{l, loc}$ gauge transformations $u_1(t)$ and $u_2(t)$ respectively so that $u_i(t) \cdot B(t) \in U_{\Gamma}$ for $i=1, 2$. Fix $t_* \in I_1 \cap I_2$. \autoref{l3.7} implies that $g_*:= u_1(t_*) u^{-1}_2(t_*) \in \Stab(\Gamma)$. We can find a small neighborhood $I_* \subset I_1 \cap I_2$ of $t_*$ so that $u_1(t)u_2^{-1}(t) = e^{\xi(t)} g_*$ for some $L^2_1$-path $\xi(t)$ of $L^2_{l, loc}$ $0$-forms. We also choose a cut-off function $\psi: I_1 \cup I_2 \to \R$ so that $\psi|_{I_2 \backslash {I_1 \cap I_2}} \equiv 0$. Then we define an $L^2_1$-path 
\[ 
u(t) :=
\begin{cases}
u_1(t) & \text{ if $ t \in I_1 \backslash I_1 \cap I_2$} \\
e^{\psi(t)\xi(t)} g_*u_2(t) & \text{ if $ t \in I_1 \cap I_2$} \\
g_*u_2(t) & \text{ if $ t \in I_2 \backslash I_1\cap I_2$} \\
\end{cases}
\]
Since $g_* \in \Stab(\Gamma)$, we know that $g_*u_2(t) \cdot B(t) \in U_{\Gamma}$. Thus $u(t)$ gives rise to an $L^2_{1, k, loc}$ gauge transformation over $I_1 \cup I_2$. 

Now we can assume $A = B(t) + \beta(t)dt$ satisfies $B(t) \in U_{\Gamma}$. We look for an $L^2_1$-path of $L^2_{l+1}$-maps $u(t): Y \to \Stab(\Gamma)$ so that 
\[
\beta(t) - \left( \dot{u}(t) + [\beta(t), u(t)] \right) u^{-1}(t) \in \left( \ker \Delta_{\Gamma} \right)^{\perp},
\]
which can be achieved by setting $u(t)$ to be a solution of the ODE $\dot{u}(t) = - u(t)\beta^h(t)$, where $\beta^h(t) \in \ker \Delta_{\Gamma}$ is the orthogonal projection of $\beta(t)$ onto $\ker \Delta_{\Gamma}$. Since $\Stab(\Gamma)$ preserves $U_{\Gamma}$, such a path combines to a $L^2_{1,k,loc}$ gauge transformation $v$ so that $v \cdot A$ is in standard form with respect to $\Gamma$. 

In the end, we prove the uniqueness up to $\Stab(\Gamma)$. Suppose $g_1$ and $g_2$ are two gauge transformations satisfying all requirements. Then the transition transformation $g = g_1  g_2^{-1}$ has to take value in $\Stab(\Gamma)$ since both $(g_1 \cdot A)|_{\{t\} \times Y}$ and $(g_2 \cdot A)|_{\{t\} \times Y}$ stay in $U_{\Gamma}$. Comparing the $dt$-parts of $g_1 \cdot A$ and $g_2 \cdot A$ via $g$, one sees that $\dot{u}(t)u^{-1}(t) \perp \ker \Delta_{\Gamma}$ for $u(t) = g|_{\{t\} \times Y}$. However, $u(t) \in \Stab(\Gamma)$ implies that $\dot{u}(t)u^{-1}(t)$ lies in the Lie algebra of $\Stab(\Gamma)$ which is $\ker \Delta_{\Gamma}$. Thus $u(t)$ has to be a constant path. 
\end{proof}

Given a flat connection $\Gamma$ over $Y$, let’s write $A_{\Gamma}$ for the corresponding translation-invariant connection over $I \times Y$. For perturbed ASD connections, we have the following standard regularity estimate. 

\begin{lem}\label{l4.11}
Let $\Gamma$ be a smooth flat connection on $E’$. One can find a sufficiently small local slice neighborhood $U_{\Gamma}$ of $\Gamma$ with the following satisfied. Given a finite-energy $\sigma$-perturbed ASD connection $A$ on $E$ of class $L^2_{k, loc}$, one can find a constant $T'$ sufficiently large so that whenever the restriction $A|_{I \times Y}=B(t) + \beta(t)dt$ is in standard form with respect to $\Gamma$ and $B(t) \in U_{\Gamma}$ for $I = (T-1, T+1)$ and $T > T'$, we have 
\[
\| A - A_{\Gamma} \|_{L^2_k(I’ \times Y)} \leq \mathfrak{c}_2 \left(\|A - A_{\Gamma}\|_{L^2(I \times Y)} + \|\sigma\|_{\mathcal{P}} e^{-\mu T}  \right),
\]
where $I’ =[T-1/2, T+1/2]$, and $\mathfrak{c}_2$ is a constant depending only on $U_{\Gamma}$.
\end{lem}

\begin{proof}
The proof goes through the standard bootstrapping argument. The only subtle point is that the Morgan--Mrowka--Ruberman gauge fixing does not put $A$ into the Coulomb slice of $A_{\Gamma}$. We need to make use of the gradient flow equation and ellipticity of $d^*_{\Gamma}d_{\Gamma}$ on $(\ker \Delta_{\Gamma})^{\perp}$ to bound the error. 

Let $\eta$ be a cut-off function on $I \times Y$ so that $\eta|_{I' \times Y} \equiv 1$. We write $A - A_{\Gamma} = b(t) + \beta(t)dt = a$ over $I \times Y$. Then we have
\[
d^+_{A_{\Gamma}}  a = \sigma(A)  -  (a\wedge a)^+ \quad \text{ and } \quad d^*_{A_{\Gamma}} a = -\dot{\beta}(t). 
\]
Let us write $a_{\eta}:= \eta a$. The elliptic estimate for $d^*_{A_{\Gamma}} \oplus d^*_{A_{\Gamma}}$ implies that 
\[
\begin{split}
\|a_{\eta}\|_{L^2_k(I \times Y)} & \leq c. \left( \|d^+_{A_{\Gamma}} a_{\eta} \|_{L^2_{k-1}(I \times Y)} + \|\eta\dot{\beta}(t)\|_{L^2_{k-1}(I \times Y)} \right) \\
&  \leq c. \Big(  \| a \|_{L^2_{k-1}} + \|a_{\eta} \wedge a_{\eta}\|_{L^2_{k-1}} + \|\eta \sigma(A)\|_{L^2_{k-1}} + \|\eta\dot{\beta}(t)\|_{L^2_{k-1}} \Big).
\end{split}
\]
Note that 
\begin{equation}\label{e4.17}
\|\dot{\beta}(t)\|_{L^2_{k-1}(I \times Y)} \leq c. \int_{T-1}^{T+1} \sum_{i+j \leq k-1} \|\partial^{i+1}_t \beta(t)\|_{L^2_k(Y)} dt. 
\end{equation}
Choosing $U_{\Gamma}$ sufficiently small, one can guarantee that 
\[
d^*_{\Gamma}d_B: (\ker \Delta_{\Gamma})^{\perp} \cap L^2_j \to  (\ker \Delta_{\Gamma})^{\perp} \cap L^2_{j-2}
\]
is uniformly invertible. Since $\partial^{i+1}_t \beta(t) \perp \ker \Delta_{\Gamma}$, we conclude that 
\[
\begin{split}
\|\partial^{i+1}_t \beta(t)\|_{L^2_j} & \leq c. \|d^*_{\Gamma} d_{B(t)} \partial^{i+1}_t \beta(t)\|_{L^2_{j-2}} \\ 
& \leq c. \left( \|d^*_{\Gamma} * \partial^{i+1}_t F_{B(t)} \|_{L^2_{j-2}} + \|d^*_{\Gamma}  \partial^{i+1}_t \rho_A(t)\|_{L^2_{j-2}} \right),
\end{split}
\]
where in the second inequality we have used the gradient equation (\ref{e4.6}). Note that $F_{B(t)} = d_{\Gamma} b(t)  + b(t) \wedge b(t)$, and $\rho_A(t) = 1/2 e^{-\mu t} \grad \tau_{\pmb{\pi}}(B(t))$. \autoref{l3.5} then implies that 
\[
\|\partial^{i+1}_t \beta(t)\|_{L^2_j}  \leq c. \Big( \|\partial^{i+1}_t (b(t) \wedge b(t))\|_{L^2_{j-1}} + e^{-\mu t}  \|\sigma\|_{\mathcal{P}} \big(1 + \|b(t)\|^{j-1}_{L^2_{j-1}} \big)\sum_{l=1}^j \|\partial^l_t b(t)\|_{L^2_{j-1}}\Big). 
\] 
Replace $\dot{\beta}(t)$ by $\eta \dot{\beta}(t)$ and substituting it back to (\ref{e4.17}), we conclude that 
\[
\begin{split}
\|\eta \dot{\beta}(t)\|_{L^2_{k-1}(I \times Y)} & \leq c. \bigg(\|a_{\eta} \wedge a_{\eta}\|_{L^2_{k-1}}  + e^{-\mu T}\|\sigma\|_{\mathcal{P}} \Big( 1 + \sum_{l=1}^k \|a_{\eta}\|^l_{L^2_{k-1}} \Big) \bigg). 
\end{split}
\]
In this way, we have arrived at a recursive inequality
\begin{equation}\label{e4.18}
\|a_{\eta}\|_{L^2_k(I \times Y)} \leq c. \bigg(\|a\|_{L^2_{k-1}} +  \|a_{\eta} \wedge a_{\eta}\|_{L^2_{k-1}}  + e^{-\mu T} \|\sigma\|_{\mathcal{P}} \Big( 1 + \sum_{l=1}^k \|a_{\eta}\|^l_{L^2_{k-1}} \Big) \bigg). 
\end{equation}
Due to the Sobolev multiplication $L^4 \times L^4 \hookrightarrow L^2$, we need the initial input that $\|a\|_{L^4(I \times Y)}$ is sufficiently small to do the rearrangement on the term $a_{\eta} \wedge a_{\eta}$. Plainly we have 
\[
\|a\|_{L^4(I \times Y)} \leq c. \int_{T-1}^{T+1} \|b(t)\|_{L^4} + \|\beta(t)\|_{L^4} dt. 
\]
The above argument that uses the ellipticity of $d^*_{\Gamma}d_{B(t)}$ implies that 
\[
\|\beta(t)\|_{L^2_1} \leq c. \left(\|b(t) \wedge b(t)\|^2_{L^4} + e^{-\mu t} \|\sigma\|_{\mathcal{P}} \right).  
\]
Since $L^2_1 \hookrightarrow L^4$ over $Y$, we conclude that 
\[
\|a\|_{L^4(I \times Y)} \leq c. \int_{T-1}^{T+1} \|b(t)\|_{L^4} + \|b(t) \wedge b(t)\|^2_{L^4} + e^{-\mu t} \|\sigma\|_{\mathcal{P}} dt,
\]
which can be chosen to be as small as we desire by shrinking $U_{\Gamma}$ and enlarging $T_1$. Now It is routine to run the bootstrapping argument over (\ref{e4.18}), which gives us the claimed estimate. 
\end{proof}

\begin{proof}[Proof of \autoref{p4.7}]
Let $A$ be perturbed ASD $L^2_{k, loc}$-connection satisfying the given assumptions. Since $L^2_{k,loc} \hookrightarrow L^2_{1,k-1, loc}$, \autoref{l4.10} implies that one can find a $L^2_{1, k, loc}$ gauge transformation $v$ on $I \times Y$ so that $A_g:=g \cdot A$ is in standard form with respect to $\Gamma$. Now \autoref{l4.11} implies that $A_g \in L^2_{k, loc}$ once $T$ is chosen to be sufficiently large and $U_{\Gamma}$ sufficiently small. If we write $A = d+a$ and $A_g = d+a_g$, then the gauge transformation satisfies 
\[
dg = ga - a_g g. 
\]
Since $a, a_g \in L^2_{k, loc}$ and $g \in L^2_{1, k, loc}$, we conclude that $g \in L^2_{k+1, loc}$ as claimed. 
\end{proof}

\section{The Asymptotic Behaviors}\label{EAM}
In this section we take on the task of deducing the existence of the asymptotic map described in \autoref{asyom}. Following the idea of Morgan--Mrowka--Ruberman \cite{MMR}, we shall give a refined description of the asymptotic map using ‘center manifolds’. In the end, we will also address the compactness of the moduli space $\M_{\sigma}(Z)$. 

\subsection{The Asymptotic Map} For later use, we state the existence of the asymptotic maps for based instantons. 

\begin{thm}\label{t5.1}
Let $A$ be a finite energy perturbed ASD connection on the bundle $\pi: E \to Z$, and $z_0=(0, y_0) \in Z$ a fixed basepoint. Given $v\in E_{z_0}:=\pi^{-1}(z_0)$ we get a path $v(t) \in E'_{y_0}$ by parallel transporting $v$ via $A$ along the path $[0, \infty) \times \{y_0\}$. Let $B(t)=A(t)|_{\{t\} \times Y}$. Then the path of equivalence classes $[(B(t), v(t)]$ admits a limit $[B_o, v_o] \in \mathcal{R}(Y)$. This defines a continuous $SU(2)$-equivariant map 
\begin{equation}
\tilde{\partial}_+: \tilde{\M}_{\sigma}(Z) \longrightarrow \mathcal{R}(Y).
\end{equation}
\end{thm}

We shall first prove the existence of the asymptotic map 
\[
\partial_+: \M_{\sigma}(Z) \to \chi(Y)
\]
in the unbased moduli space, which follows the strategy in \cite{MMR}. The main point is to prove that any perturbed gradient flowline $B(t)$ has finite length. The idea of the proof goes as follows. We first show that the length of $B(t)$ is bounded in terms of the energy of the ASD connection under the extra assumption that $B(t)$ is in standard form with respect to a flat connection $\Gamma$ for all $t$. Then we argue that one can choose a small  slice neighborhood $V_{\Gamma}$ for each flat connection $\Gamma$ so that whenever $B(t)$ enters $V_{\Gamma}$ for some $t$ sufficiently large, $B(t)$ cannot exit a slightly larger slice neighborhood $U_{\Gamma}$ henceforth. In the end, we can apply Uhlenbeck’s weak compactness to deduce that when the energy of $B(t)$ is small enough, there exists a moment $t_0$ when the perturbed gradient flowline $B(t_0)$ enters the small slice neighborhood $V_{\Gamma}$ for some $\Gamma$ up to gauge transformations. Since the length of a flowline is invariant under gauge transformations, the finiteness is thus proved. 

\begin{dfn}
Let $\Gamma$ be a smooth flat connection on $E'$. Denote by $\mu_{\Gamma}$ the smallest nonzero absolute value of eigenvalues of the restricted Hessian $*d_{\Gamma}|_{\ker d^*_{\Gamma}}$.
\end{dfn}

Since $\chi(Y)$ is compact, and $\mu_{\Gamma}$ is gauge invariant, we can fix a finite number $\mu > 0$ satisfying 
\begin{equation}
\mu \geq \max \{ \mu_{\Gamma}: \Gamma \text{ is a smooth flat connection} \}.
\end{equation}
Now we start in earnest to give a length estimate for perturbed gradient flowlines.

\begin{lem}[{\cite[Proposition 4.2.1]{MMR}}]\label{l5.3}
Let $\Gamma$  be a smooth flat connection on $E'$. Then there exists a local slice neighborhood $U_{\Gamma}$ of $\Gamma$, and constant $\theta \in (0, 1/2]$ so that for any connection $B \in U_{\Gamma}$ one has 
\begin{equation}\label{e5.3}
\bigl\lvert\cs(B)-\cs(\Gamma)\bigr\rvert^{1- \theta} \leq \sqrt{2} \cdot \bigl\| \grad_{\Gamma} \cs_{\Gamma}(B) \bigr\|_{L^2}. 
\end{equation}
\end{lem}

\begin{rem}\label{r5.4}
Compared with the original statement in \cite[Proposition 4.2.1]{MMR}, we have an extra factor $\sqrt{2}$ in (\ref{e5.3}). If one replaces $\| \grad_{\Gamma} \cs_{\Gamma}(B) \|_{L^2}$ by $\| \grad_{\Gamma} \cs_{\Gamma}(B) \|_{\Gamma}$, then there is no $\sqrt{2}$ in front. This is caused by our choice of $U_{\Gamma}$ in \autoref{l4.2}. This estimate is known as the Łojasiewicz type inequality whose infinite dimensional version was originally proved by Leon Simon \cite{S83}. 
\end{rem}

\begin{lem}\label{l5.5}
Let $\Gamma$, $U_{\Gamma}$, and $\theta$ be as in \autoref{l5.3}. Let $A$ be a perturbed instanton on $E$ whose restriction to $[t_1, t_2] \times Y$ is in standard form with respect to $\Gamma$. Suppose
\begin{equation}\label{cpd}
\bigl\| p_A(t) \bigr\|_{L^2} \leq \alpha \cdot \bigl\| \grad_{\Gamma} \cs_{\Gamma}(B(t)) \bigr\|_{L^2},
\end{equation}
for some constant $\alpha \in (0, 1/2)$. Then 
\[
\int_{t_1}^{t_2} \| \dot{B}(t) \|_{L^2} dt \leq {\sqrt{2} \over {\theta^2 \sqrt{1- \alpha^2}}} \cdot \Bigl\lvert \cs(B(t_1)) - \cs(B(t_2)) \Bigr\rvert^{\theta}. 
\]
\end{lem}

\begin{proof}
The inequality (\ref{cpd}) implies that 
\[
\| p_A(t) \|_{\Gamma} \leq 2\alpha \| \grad_{\Gamma} \cs_{\Gamma}(B(t)) \|_{\Gamma},
\]
from which we get 
\begin{equation*}
\begin{split}
\| \dot{B}(t) \|^2_{\Gamma} + \| \grad_{\Gamma} \cs_{\Gamma} (B(t)) \|^2_{\Gamma} & = -2 \langle \dot{B}(t), \grad_{\Gamma} \cs_{\Gamma}(B(t)) \rangle_{\Gamma} + \| p_A(t) \|^2_{\Gamma} \\ 
& \leq  -2 \left\langle \dot{B}(t), \grad_{\Gamma} \cs_{\Gamma}(B(t)) \right\rangle_{\Gamma} \\
& + 4\alpha^2 \| \grad_{\Gamma} \cs_{\Gamma}(B(t)) \|^2_{\Gamma}. 
\end{split}
\end{equation*}
Now the inequality between the arithmetic and geometric means implies that 
\begin{equation*}
\begin{split}
2\sqrt{1- 4\alpha^2} \|\dot{B}(t)\|_{\Gamma} \|\grad_{\Gamma} \cs_{\Gamma} (B(t)) \|_{\Gamma} & \leq \| \dot{B}(t) \|_{\Gamma}^2 + (1 -4\alpha^2) \| \grad_{\Gamma} \cs_{\Gamma} (B(t)) \|_{\Gamma}^2 \\
& \leq -2 \left\langle \dot{B}(t), \grad_{\Gamma} \cs_{\Gamma}(B(t)) \right\rangle_{\Gamma} \\
& = -2 {d \over dt} \cs(B(t)). 
\end{split}
\end{equation*}
In particular we conclude that the Chern--Simons functional $\cs$ is non-increasing along a flowline solving (\ref{pas}) under our assumption. 

Let us assume that $\cs(B(t_1)) >\cs(B(t_2) > \cs(\Gamma)$. The other cases can be proved similarly. Then applying \autoref{l5.3} (also \autoref{r5.4}) we get 
\begin{equation*}
\begin{split}
-{d \over dt}  (\cs(B(t)) - \cs(\Gamma))^{\theta} & \geq \theta \sqrt{1-4\alpha^2}  |\cs(B(t))  - \cs(\Gamma)|^{\theta -1} \|\dot{B}(t)\|_{\Gamma} \cdot \|\grad_{\Gamma} \cs_{\Gamma} (B(t)) \|_{\Gamma} \\
& \geq \theta\sqrt{1-4\alpha^2} \|\dot{B}(t)\|_{\Gamma} \geq \frac{\theta \sqrt{1-4\alpha^2}}{\sqrt{2}} \|\dot{B}(t)\|_{L^2}.
\end{split}
\end{equation*}
Thus integrating both sides we get 
\begin{equation*}
\begin{split}
\int_{t_1}^{t_2} \|\dot{B}(t)\|_{L^2} dt & \leq {\sqrt{2} \over {\theta \sqrt{1-4\alpha^2}}} \cdot \Big( \big(\cs(B(t_1)) -\cs(\Gamma)\big)^{\theta} - \big(\cs(B(t_2)) -\cs(\Gamma) \big)^{\theta} \Big) \\ 
& \leq {\sqrt{2} \over {\theta^2 \sqrt{1-4\alpha^2}}} \cdot \Bigl\lvert \cs(B(t_1)) - \cs(B(t_2)) \Bigr\rvert^{\theta}. 
\end{split}
\end{equation*}
\end{proof}

When the assumption (\ref{cpd}) is violated, the Chern--Simons functional does not necessarily decrease along the flowline $B(t)$ corresponding to a perturbed ASD connection $A$. Nevertheless, one can still establish a monotonicity result after adding a term of exponential decay to the Chern--Simons functional. 

\begin{lem}\label{l4.6}
Let $\Gamma$ and $U_{\Gamma}$ be as in \autoref{l5.3}, and $A$ a perturbed ASD connection on $E$ whose restriction $A|_{I \times Y} = B(t) \times \beta(t)dt$ is in standard form with respect to $\Gamma$ and $B(t) \in U_{\Gamma}$ for all $t \in I$. Then the function 
\[
\cs_A(t):= \cs(B(t)) + {\mathfrak{c}^2_0 \|\pmb{\omega}\|^2_W \over {2\mu}} e^{-2\mu t}
\]
is non-increasing over $I$, where $\mathfrak{c}_0$ is the constant in (\ref{e4.11}). 
\end{lem}

\begin{proof}
We compute that 
\[
- {d \over {dt}} \cs_A(t)  = -\left\langle \dot{B}(t), \grad_{\Gamma} \cs_{\Gamma}(B(t)) \right\rangle_{\Gamma} + \mathfrak{c}^2_0\bigl\|\pmb{\omega}\bigr\|^2_W e^{-2\mu t}. 
\]
(\ref{e4.11}) tells us that 
\[
- {d \over {dt}} \cs_A(t) 
\geq - \left\langle \dot{B}(t), \grad_{\Gamma} \cs_{\Gamma}(B(t)) \right\rangle_{\Gamma} + \frac{1}{2}\bigl\|p_A(t)\bigr\|^2_{\Gamma}.
\]
Appealing to (\ref{pas}), the right-hand side of the above inequality is
\[
\frac{1}{2}\left(\| \dot{B}(t) \|^2_{\Gamma} + \| \grad_{\Gamma} \cs_{\Gamma} (B(t)) \|^2_{\Gamma}\right)
\]
which is non-negative. 
\end{proof}

Now we give an estimate of the length of a perturbed flowline $B(t)$, which is based on discussion with Cliff Taubes. 

\begin{prop}\label{p5.7}
Let $\Gamma$, $U_{\Gamma}$, and $\theta$ be chosen to satisfy the assumption in \autoref{l4.4} and \autoref{l5.3}. Let $A$ be a perturbed instanton on $E$ of the form in (\ref{pas}) such that $B(t) \in U_{\Gamma}$ for $t \in [0, \infty)$. Then one can find $T_1 > 0$ such that 
\begin{equation}
\int_{T_1+1}^{\infty} \|\dot{B}(t)\|_{L^2} dt \leq \mathfrak{c}_3 \left(\|F_A\|_{L^2((T_1, \infty) \times Y)} + e^{-\mu\theta T_1}\right),
\end{equation}
where $\mathfrak{c}_3$ only depends on $\Gamma$, $U_{\Gamma}$, $\|\sigma\|_{\mathcal{P}}$, $\mu$, and $\theta$.
\end{prop}

\begin{proof}
Since $\|F_A\|_{L^2(Z)}$ is finite, \autoref{l4.5} implies that one can find $T_1 > 0$ so that
\begin{equation}\label{e5.6}
\|F_{B(t)} \|_{L^2} \leq c. \left(\|F_A \|_{L^2((T_1, \infty) \times Y)} + e^{-\mu t}\right), \text{ when } t \geq T_1+1.
\end{equation}

Now consider the following two cases:
\begin{enumerate}[label=\protect\circled{\arabic*}]
\item $\| p_A(t) \|_{L^2} \leq {1 \over 4} \| \grad_{\Gamma} \cs_{\Gamma}(B(t)) \|_{L^2}$.
\item $\| p_A(t) \|_{L^2} \geq {1 \over 4} \| \grad_{\Gamma} \cs_{\Gamma}(B(t)) \|_{L^2}$.
\end{enumerate}
If $\circled{1}$ holds for all $t \in [T_1, \infty)$, then \autoref{l5.5} tells us that for any $T' > T_1$ we have 
\[
\begin{split}
\int_{T_1}^{T'} \|\dot{B}(t)\|_{L^2} dt & \leq c. |\cs(B(T_1) - \cs(B(T'))|^{\theta} \\
& \leq c. \left(\int_{T_1}^{T'} \int_Y \tr(F_A \wedge F_A) \right)^{\theta}\\
& = c. \left(\int_{T_1}^{T'} \int_Y |F_A|^2 +2\tr(F_A^+ \wedge F_A^+) \right)^{\theta} \\
& \leq c. \left(\|F_A\|^{\theta}_{L^2((T_1, T') \times Y)} + e^{-2\mu \theta T_1}\right).
\end{split}
\]
Letting $T’ \to \infty$, we conclude that 
\[
\int_{T_1+1}^{\infty} \|\dot{B}(t)\|_{L^2} dt \leq c. \left(\|F_A\|^{\theta}_{L^2((T_1, \infty) \times Y)} + e^{-2\mu \theta T_1}\right).
\]

If $\circled{2}$ holds for all $t \in [T_1, \infty)$, then the exponential decay on both $p_A(t)$ and $\grad_{\Gamma} \cs_{\Gamma}(B(t))$ implies the result. 

For the remaining case, we may assume $\circled{2}$ holds at $t=T_1$. Let $[a_0, b_0], ..., [a_n, b_n], ...$ be a sequence of intervals with integer-valued end points such that 
\begin{enumerate}[label=(\roman*)]
\item $a_0 > T_1$, and $a_i > b_{i-1}$ for $i \geq 1$. 
\item $\circled{1}$ holds for all $t \in [a_i, b_i]$, $i \geq 0$. 
\item For any $k$ with $b_i \leq k < a_{i+1}$, there exists $t_k \in [k, k+1]$ such that $\circled{2}$ holds at $t=t_k$. 
\end{enumerate}
We need to estimate the the length of $B(t)$ over $[a_i, b_i]$ and $[b_i, a_{i+1}]$ separately. \\
\noindent{\bf \em {Step 1. }} Let us first consider the case over $[a_i, b_i]$ where $\circled{1}$ always holds. We choose 
\[
t_{a_i} = \max \{ t \in [a_i - 1 , a_i] : \circled{2} \text{ holds at } t \}
\]
and 
\[
t_{b_i} = \min \{t \in [b_i, b_i+1]: \circled{2} \text{ holds at } t\}. 
\]
From \autoref{l5.3} and \autoref{l5.5} we know that 
\[
\begin{split}
\int_{t_{a_i}}^{t_{b_i}} \|\dot{B}(t) \|_{L^2} dt & \leq c. \left(| \cs(B(a_i) - \cs(\Gamma) |^{\theta} + |\cs(B(b_i)) - \cs(\Gamma)|^{\theta} \right) \\
& \leq c. \Bigl(\| \grad_{\Gamma} \cs_{\Gamma}(B(t_{a_i})) \|^{\theta \over {1-\theta}} \\
&  + \| \grad_{\Gamma} \cs_{\Gamma}(B(t_{b_i}) \|^{\theta \over {1-\theta}} \Bigr) \\
& \leq c. \left(e^{- \mu t_{a_i} \cdot {\theta \over{1- \theta}}}+ e^{- \mu t_{b_i} \cdot {\theta \over{1- \theta}}}\right) \\
& \leq c.  e^{- {\mu \theta \over {1-\theta}} \cdot  (a_i -1)}
\end{split}
\]

\noindent{\bf \em {Step 2. }} Next we consider the case over $[b_i, a_{i+1}]$. Let $k$ be an integer in $[b_i, a_{i+1})$, and $t_k \in [k, k+1]$ such that $\circled{2}$ holds at $t=t_k$. Applying Hölder's inequality we get 
\[
\begin{split}
\int_{t_k}^{k+2} \|\dot{B}(t)\|_{L^2} dt & \leq \sqrt{2}\int_{t_k}^{k+2} \|\dot{B}(t)\|_{\Gamma} dt \\
&\leq \sqrt{2 (k+2-t_k)} \left(\int_{t_k}^{k+2} \| \dot{B}(t)\|^2_{\Gamma} dt \right)^{\frac{1}{2}} \\
& \leq 2 \left( \int_{t_k}^{k+2} \langle \dot{B}(t), -\grad_{\Gamma} \cs_{\Gamma}(B(t)) + p_A(t) \rangle_{\Gamma} dt\right)^{\frac{1}{2}} \\
& \leq 2 \bigg(|\cs(B(t_k)) - \cs(B(k+2))|^{\frac{1}{2}} \\
& + \left|\int_{t_k}^{k+2} \langle -\grad_{\Gamma} \cs_{\Gamma}(B(t)) + p_A(t), p_A(t) \rangle_{\Gamma} dt \right|^{\frac{1}{2}} \bigg).
\end{split}
\]
The estimate (ii) in \autoref{l4.3} implies that for $B \in U_{\Gamma}$ 
\[
\| \grad_{\Gamma} \cs_{\Gamma}(B) \|_{L^2} \leq 2\|F_B\|_{L^2}.
\]
Continuing with the computation above, we get 
\[
\begin{split}
\int_{t_k}^{k+2} \|\dot{B}(t)\|_{L^2} dt & \leq c. \bigg(|\cs(B(t_k)) - \cs(\Gamma)|^{\frac{1}{2}} + |\cs(B(k+2)) - \cs(\Gamma)|^{\frac{1}{2}} \\
& \hspace{18mm} + e^{-\mu t_k} \left(\int_{t_k}^{k+2} \|F_{B(t)}\|^2_{L^2} dt\right)^{\frac{1}{2}} + e^{-\mu t_k} \bigg) \\
&\leq c. \Big(\|\grad_{\Gamma} \cs_{\Gamma}(B(t_k))\|^{1 \over {2- 2 \theta}}_{L^2} + e^{- \mu t_k} \|F_A\|_{L^2((T_1, \infty) \times Y)} + e^{- \mu t_k}  \Big) \\
&\leq c.\Big(e^{-{\mu \over {2- 2 \theta}} t_k } + e^{- \mu t_k} \|F_A\|_{L^2((T_1, \infty) \times Y)} + e^{-\mu t_k}\Big).
\end{split}
\] 
In the first inequality, we used \autoref{l5.3} to bound the difference $|\cs(B(t_k)) - \cs(\Gamma)|$ and \autoref{l4.6} to bound the difference $|\cs(B(t_k)) - \cs(B(k+2))|$. In the second inequality, we used (\ref{e5.6}) to bound $\|F_{B(t)}\|$ via $\|F_A\|_{L^2((T_1, \infty) \times Y)}$. 

Combining Step 1 and Step 2 we conclude that if neither of $\circled{1}$ nor $\circled{2}$ holds for all $t \in [0, \infty)$, the length of the perturbed gradient flowline after $T_1$ is bounded by
\[
\begin{split}
\int_{T_1}^{\infty} \|\dot{B}(t) \| & \leq c. \sum_{n=N}^{\infty} \bigg( e^{-{\mu \theta \over {1- \theta}} n} + e^{-{\mu \over {2- 2 \theta}} n }  + e^{-\mu n} + e^{- \mu n} \|F_A\|_{L^2((T_1, \infty) \times Y)}\bigg)\\
& \leq c. \left ( \|F_A\|_{L^2((T_1, \infty) \times Y)} + e^{- \mu \theta T_1} \right),
\end{split}
\]
where the constant only depends on $\Gamma$, $U_{\Gamma}$, $\|\sigma\|_{\mathcal{P}}$, $\mu$, and $\theta$.
\end{proof}

Now we have completed the first step in the sketched proof of length finiteness. The second step is to provide a refined choice of local slice neighborhood for $\Gamma$ where a gauge-fixing argument can be made to achieve the assumption of \autoref{p4.7}. As we mentioned in \autoref{r4.6}, the failure of getting an $L^2_1$-bound causes a flaw in the proof of \cite[Theorem 4.3.1]{MMR}. To fix their argument, we shall start by working over neighborhoods $U_{\Gamma}$ consisting of connections of weaker regularity, then apply the bootstrapping argument to compensate the loss of regularity for perturbed gradient flowlines. To this end, we first make a simple observation.

\begin{lem}\label{l5.8}
Let $\Gamma$ be a smooth flat connection on $E’$. Then one can find a sufficiently small local slice $L^2_1$-neighborhood $U’_{\Gamma}$ with the following satisfied. Given any $L^2_1$-neighborhood $V’_{\Gamma} \Subset U’_{\Gamma}$ and $\epsilon > 0$, one can find $\epsilon’ > 0$ so that for any connection $B \in V’_{\Gamma}$ satisfying $\|F_B\|_{L^2} < \epsilon’$ there exists a flat connection $\Gamma’ \in U’_{\Gamma}$ with 
\[
\|B - \Gamma’\|_{L^2_1} < \epsilon. 
\]
\end{lem}

\begin{proof}
Suppose on the contrary that for any $L^2_1$- neighborhood $U’_{\Gamma}$ one can find an $L^2_1$-neighborhood $V’_{\Gamma} \Subset U’_{\Gamma}$ and a sequence $B_n$ of connections in $V’_{\Gamma}$ satisfying $\|F_{B_n}\|_{L^2} \to 0$ as $n \to \infty$, but the $L^2_1$-limit set of $\{B_n\}$ does not contain any flat connections in $U’_{\Gamma}$. 

We write $b_n = B_n - \Gamma$. Since $\|b_n \|_{L^2_1}$ is bounded and the embedding $L^2_1 \hookrightarrow L^2$ is compact, we can take a subsequence, still denoted by $B_n$, that converges strongly in $L^2$ and weakly in $L^2_1$ to a $L^2_1$-connection $\Gamma’  \in \overline{V}’_{\Gamma} \subset U’_{\Gamma}$. Let’s write $b_n’ = B_n - \Gamma’$ and $\gamma = \Gamma’ - \Gamma$. Then we have 
\[
d_{\Gamma} b’_n = F_{B_n} - b’_n \wedge b’_n - [\gamma, b’_n]. 
\]
By choosing $U_{\Gamma}$ sufficiently small, the elliptic estimate for $d^*_{\Gamma} + d_{\Gamma}$ implies that 
\begin{equation}\label{e4.8}
\| b’_n \|_{L^2_1} \leq c. \|F_{B_n}\|_{L^2}, 
\end{equation}
where the constant only depends on $U’_{\Gamma}$. Since $\|F_{B_n}\|_{L^2}$ converges to $0$, we see that $B_n$ converges strongly in $L^2_1$ to $\Gamma’$, which also implies that $\Gamma'$ is flat. This is a contradiction. 
\end{proof}

\begin{prop}\label{p5.9}
Let $\Gamma$ be a smooth flat connection on $E’$. Then one can find a pair of local slice neighborhoods $V_{\Gamma} \Subset U_{\Gamma}$, a sufficiently small constant $\epsilon_2 > 0$, and a sufficiently large constant $T_2 > 0$ so that the following hold. Suppose $A$ is a $\sigma$-perturbed ASD $L^2_{k, loc}$-connection of finite energy over $Z$ satisfying 
\begin{enumerate}
\item[\upshape (i)] $\|\sigma\|_{\mathcal{P}} \leq 1$;
\item[\upshape (ii)] $\|F_A\|_{L^2([T_2, \infty) \times Y)} < \epsilon_2$; 
\item[\upshape (iii)] $A|_{\{T_2\} \times Y} = B(T_2)$ is $L^2_{l+1}$ gauge equivalent to a connection in $V_{\Gamma}$.
\end{enumerate}
Then one can find a $L^2_{k+1, loc}$ gauge transformation $g$ on $[T_2, \infty) \times Y$ so that $g \cdot A$ is in standard form with respect to $\Gamma$ and $(g \cdot A)|_{\{t \} \times Y} \in U_{\Gamma}$  for all $t \in [T_2, \infty)$. 
\end{prop}

\begin{proof}
The proof consists of several steps. The idea is to derive the result for local slice $L^2_1$-neighborhoods $V’_{\Gamma} \Subset U’_{\Gamma}$ using arguments in the proof of \cite[Theorem 4.3.1]{MMR}, then apply \autoref{l4.11} to improve the regularity.  

\vspace{3mm}

\noindent{\bf \em {Step 1. }} Note that the proof of \autoref{p4.7} together with its companion lemmas makes no use of higher norms $L^2_j$ with $j \geq 1$. So we may choose a local slice $L^2_1$-neighborhood of $U’_{\Gamma}$ of $\Gamma$ where the conclusion of \autoref{p4.7} still applies. Give $\epsilon > 0$, we let 
\[
\begin{split}
U’_{\Gamma}(\epsilon):= \biggl\{ B \in U’_{\Gamma} : \exists & \text{ a flat connection } \Gamma’ \in U’_{\Gamma} \\
& \text{ satisfying } \|B - \Gamma’\|_{L^2_1},  \|\Gamma’ - \Gamma\|_{L^2_1} < \epsilon \biggr\}. 
\end{split}
\]
We fix $\epsilon_a > 0$ so that $U’_{\Gamma}(\epsilon_a) \Subset U’_{\Gamma}$. We let $V’_{\Gamma} = \{ B \in U’_{\Gamma}(\epsilon_a): \|B - \Gamma\|_{L^2_1} < \epsilon_b\}$ for some $\epsilon_b > 0$ to be determined later. For now, we only require that $V’_{\Gamma} \Subset U’_{\Gamma}(\epsilon_a)$. 

\vspace{2mm}

\noindent{\bf \em {Step 2. }} Fix $\epsilon_c > 0$ and $T_c > 0$ to be determined later. Let $A$ be a $\sigma_{\pmb{\omega}}$-perturbed ASD $L^2_{k, loc}$-connection on $E$ so that 
\[
\|\sigma\|_{\mathcal{P}} \leq 1, \quad \|F_A\|_{L^2([T_c-1, \infty) \times Y)} < \epsilon_c, \quad A|_{\{T_c\} \times Y} = B(T_c) \in V’_{\Gamma}.
\]
By the argument of \autoref{l4.10}, one can apply $L^2_{k+1, loc}$ gauge transformations on $A$ so that $A|_{\{t\} \times Y} \in U’_{\Gamma}$ for $t \in (T_c-\epsilon_d, T_c+\epsilon_d)$. Iterating this procedure, we can find a maximal moment $T_m \in (T_c, \infty)$ so that $A|_{\{t\} \times Y} \in U’_{\Gamma}$ for all $t \in (T_c, T_m)$ after an $L^2_{k+1, loc}$ gauge transformation. 

\vspace{2mm}

\noindent{\bf \em {Step 3. }} We claim that one can choose $\epsilon_b$, $\epsilon_c$, and $T_c$ appropriately so that $T_m = \infty$. Suppose $T_m < \infty$. Then we can find $T_d \in (T_c, T_m)$ so that $B(T_d) \in U’_{\Gamma} \backslash \overline{U}’_{\Gamma}(\epsilon_a)$. Then \autoref{p4.7} implies that 
\begin{equation}
\begin{split}
\|B(T_d) - B(T_c)\|_{L^2} & \leq \int_{T_c}^{T_d} \|\dot{B}(t)\|_{L^2} \\
& \leq \frak{c}_3 \left(\|F_A\|_{L^2((T_c-\epsilon_d,\infty) \times Y)} + e^{-\mu \theta (T_c-\epsilon_d)}  \right). 
\end{split}
\end{equation}
Here $T_c - \epsilon_d$ plays the role of $T_1$ in \autoref{p4.7}, which will cause the dependence of $\mathfrak{c}_3$ on $\epsilon_d$. Since $\epsilon_d$ is fixed and can be made to be independent of $B(T_c)$ by choosing $V’_{\Gamma}$ sufficiently small relative to $U’_{\Gamma}$, $\mathfrak{c}_3$ will not depend on $\epsilon_b$, $\epsilon_c$, nor $T_c$. Applying \autoref{l4.5} to $(T_c-\epsilon_d, \infty) \times Y$, we get
\begin{equation}\label{e5.10}
\|F_{B(T_d)}\|_{L^2} \leq \frak{c}_1 \left(\|F_A\|_{L^2((T_c-\epsilon_d, \infty) \times Y)} + e^{-\mu T_c} \right). 
\end{equation}
\autoref{l5.8} tells us we can choose appropriate $\epsilon_c$ and $T_c$ so that there exists a flat connection $\Gamma’ \in U’_{\Gamma}$ satisfying 
\begin{equation}
\|B(T_d) - \Gamma’\|_{L^2_1} \leq \epsilon_a. 
\end{equation}
Since $B(T_c) \in V’_{\Gamma}$, we know that $\|B(T_c) - \Gamma\|_{L^2_1} \leq \epsilon_b$. Thus 
\begin{equation}
\begin{split}
\|\Gamma’ - \Gamma\|_{L^2} & \leq \|\Gamma’ - B(T_d) \|_{L^2} + \|B(T_c) - \Gamma\|_{L^2} + \|B(T_d) - B(T_c)\|_{L^2} \\
&\leq \epsilon_a + \epsilon_b + \frak{c}_3 \left( \|F_A\|_{L^2((T_c - \epsilon_d, \infty) \times Y)} + e^{-\mu\theta(T_c-\epsilon_d)} \right).
\end{split}
\end{equation}
Since $\Gamma’$ is a flat connection, the elliptic estimate of $d^*_{\Gamma} + d_{\Gamma}$ implies that 
\begin{equation}\label{e5.13} 
\begin{split}
\| \Gamma’ - \Gamma\|_{L^2_1} & \leq c. \|\Gamma’ - \Gamma\|_{L^2} \\
& \leq c. \bigg(\epsilon_a + \epsilon_b +\mathfrak{c}_3 \left( \|F_A\|_{L^2((T_c - \epsilon_d, \infty) \times Y)} + e^{-\mu\theta(T_c-\epsilon_d)} \right) \bigg).
\end{split}
\end{equation}
Thus we can choose $\epsilon_b$, $\epsilon_e$ small and $T_c$ large so that the right-hand side of (\ref{e5.13}) becomes smaller than $\epsilon_a$, which implies that $B(T_d) \in U’_{\Gamma}(\epsilon_a)$. This is a contradiction. 

\vspace{2mm}

\noindent{\bf \em {Step 4. }} Let’s take 
\[
V_{\Gamma} = \left\{ B \in \mathcal{S}_{\Gamma}: \|B - \Gamma\|_{L^2_l} < \epsilon_b \right\} \Subset U_{\Gamma} = \left\{ B \in \mathcal{S}_{\Gamma}: \|B - \Gamma\|_{L^2_l} < \epsilon_e \right\},
\]
where $\epsilon_b$ is the constant used to define $V’_{\Gamma}$ and $\epsilon_e$ is a constant to be determined later. From the discussion above, we know that given a perturbed ASD $L^2_{k, loc}$-connection $A$ with $A|_{\{T_c \} \times Y} \in V’_{\Gamma}$, one can find an $L^2_{k+1, loc}$ gauge transformation $g$ so that $(g \cdot A)|_{\{t\} \times Y} \in U’_{\Gamma}$ for all $t \in [T_c, \infty)$. Abusing notations, we simply write $A$ for the transformed connection $g \cdot A$. Let’s write $A=B(t) + \beta(t)dt$ for $t \in (T_c, \infty)$. Due to the uniform invertibility of $d^*_{\Gamma}d_B$ on $(\ker \Delta_{\Gamma})^{\perp}$ with respect to $B$ near $\Gamma$, the gradient flow equation (\ref{e4.7}) implies that 
\begin{equation}
\|\beta(t)\|_{L^2} \leq \|\beta(t)\|_{L^2_1} \leq c. \left( \|F_{B(t)}\|_{L^2} + e^{-\mu t} \right).  
\end{equation}
Thus given $T \in [T_c, \infty)$, we have
\[
\|A - A_{\Gamma}\|_{L^2((T-\epsilon_d/2, T+\epsilon_d/2) \times Y)} \leq c. \bigg( \int_{T-\epsilon_d}^{T+\epsilon_d} \|F_{B(t)}\|_{L^2} dt  + \int_{T-\epsilon_d}^{T+\epsilon_d} \|B(t) - \Gamma\|_{L^2}dt + e^{-\mu T} \bigg).
\]
By choosing $\epsilon_c$ and $U_{\Gamma}’$ smaller in the beginning, we can guarantee that the assumptions in \autoref{l4.11} are satisfied. Thus we conclude that 
\begin{equation}\label{e5.16}
\begin{split}
\|B(T) - \Gamma\|_{L^2_l} & \leq c. \|A-A_{\Gamma}\|_{L^2_k((T-\epsilon_d/2, T+\epsilon_d/2) \times Y)} \\
& \leq c. \bigg( \int_{T-\epsilon_d}^{T+\epsilon_d} \|B(t) - \Gamma\|_{L^2}dt + \int_{T-\epsilon_d}^{T+\epsilon_d} \|F_{B(t)}\|_{L^2} dt + e^{-\mu T} \bigg) \\
& \leq \mathfrak{c}_a \left( \epsilon_a + \epsilon_c + e^{-\mu T}\right).
\end{split}
\end{equation}
By setting $\epsilon_e = \mathfrak{c}_a \left( \epsilon_a + \epsilon_c + e^{-\mu T}\right) +1$, we conclude that $B(T) \in U_{\Gamma}$ for all $T \in [T_c, \infty)$. Back to the original statement, we can take $\epsilon_2 = \epsilon_c$ and $T_2 = T_c$. This completes the proof. 
\end{proof}

Now we are ready to complete the proof of \autoref{t5.1}. We first deduce the existence of the asymptotic maps for unbased moduli spaces. The proof of the based version will follow from the same argument as in \cite[Theorem 4.6.1]{MMR}. 

\begin{proof}[Proof of \autoref{t5.1}]
Let $A$ be a $\sigma$-perturbed ASD $L^2_{k, loc}$-connection on $E$ of finite energy. We write $A|_{[0, \infty) \times Y} = B(t) + \beta(t)dt$ as before. \autoref{l4.5} implies that $\|F_{B(t)}\|_{L^2} \to 0$ as $t \to \infty$. Uhlenbeck’s weak compactness (cf. \cite[Theorem A]{W04}) provides us with a sequence $T_n \to \infty$ so that $B(T_n)$ converges strongly in $L^2$ and weakly in $L^2_1$  to a flat connection $\Gamma$ after applying $L^2_2$-gauge transformations. After applying a further gauge transformation, one can guarantee that $\Gamma$ is a $C^{\infty}$ flat connection (cf. \cite{W05}). Since we start with $L^2_l$-connections $B(t)$, the gauge transformations we applied are actually in $L^2_{l+1}$ (see the proof of \autoref{p4.7}). Due to the $L^2$-convergence of $B(T_n)$, the Local Slice theorem \cite[Theorem F]{W04} implies that one can apply gauge transformations on $B(T_n)$ so that $B(T_n) \in V_{\Gamma}$ with $V_{\Gamma}$ the local slice neighborhood given by \autoref{p5.9} when $n$ is sufficiently large. Again these gauge transformations are in $L^2_{l+1}$. Thus we may fix a moment $T_o$ so that $B(T_o) \in V_{\Gamma}$ after applying an $L^2_{l+1}$ gauge transformation and $\|F_A\|_{L^2([T_o, \infty) \times Y)}$ is small enough to apply \autoref{p5.9}. Since we don’t require $\|\sigma\|_{\mathcal{P}} \leq 1$, the moment $T_o$ would depend on $\|\sigma\|_{\mathcal{P}}$ which is fixed in the beginning. Now invoking \autoref{p5.9}, we know that $A$ is in standard form with respect to  the smooth flat connection $\Gamma$ on $[T_o, \infty) \times Y$, possibly after applying a gauge transformation which, due to \autoref{p4.7}, is in class $L^2_{k+1, loc}$.

Let $U_{\Gamma}$ be the slice neighborhood given in \autoref{p5.9} so that $B(t) \in U_{\Gamma}$ when $t > T_o$. We claim that $B(t)$ converges to a flat connection $\Gamma_o \in U_{\Gamma}$ strongly in $L^2_{l+1}$. We first prove the claim for $L^2_1$-convergence. The proof of \autoref{l5.8} (cf. the derivation of (\ref{e4.8})) implies that any strongly $L^2$-convergent subsequence of $B(t)$ actually converges strongly in $L^2_1$. Suppose $\{t_n\}$ and $\{s_n\}$ are two subsequences in $[T_o, \infty)$ so that $B(t_n)$ and $B(s_n)$ converges strongly to $\Gamma_t$ and $\Gamma_s$ respectively in $L^2_1$. Then 
\[
\|\Gamma_t - \Gamma_s\|_{L^2} \leq \|\Gamma_t - B(t_n)\|_{L^2} + \|B(t_n) - B(s_n)\|_{L^2} + \|B(s_n) - \Gamma_s\|_{L^2}. 
\]
We may choose $t_n < s_n$ for all $n$ by passing to subsequences. Then \autoref{p5.9} implies that 
\[
\|B(t_n) - B(s_n)\|_{L^2} \leq \int_{t_n}^{s_n} \|\dot{B}(t)\|_{L^2} dt \to 0 \text{ as } n \to \infty. 
\]
Letting $n \to \infty$, we conclude that $\Gamma_t = \Gamma_s$. Thus the $L^2_1$ limit set of $B(t)$ consists of a single flat connection, say $\Gamma_o \in U_{\Gamma}$. Since $\Gamma$ is smooth, applying the bootstrapping argument to the equation $F_{\Gamma_o} = 0$ gives us that $\Gamma_o$ is smooth as well. Due to the strong $L^2_1$-convergence of $B(t)$, we may apply a further gauge transformation so that $A$ is in standard form with respect to $\Gamma_o$ over $[T’_o, \infty)$ for some $T’_o$ sufficiently large. Then \autoref{l4.11} can be applied to deduce the estimate (\ref{e5.16}) with respect to $\Gamma_o$, which implies that $B(t)$ converges strongly to $\Gamma_o$ in $L^2_l$. 

In \autoref{r1.8}, we mentioned that the convergence of $B(t)$ to $\Gamma_o$ is actually in $C^{\infty}$-topology. This follows from \autoref{l4.11} directly, since once $L^2_k \hookrightarrow C^0$ the bootstrapping argument using (\ref{e4.18}) would not involve a rearrangement of the quadratic term, and one can bound the $L^2_{k+1}$-norm for $k$ arbitrarily large without shrinking $U_{\Gamma_o}$ further. 

In this way, we have assigned a flat connection $[\Gamma_o] \in \chi(Y)$ to a perturbed instanton $[A] \in \M_{\sigma}(Z)$. This assignment is denoted by 
\[
\partial_+: \M_{\sigma}(Z) \longrightarrow \chi(Y),
\]
which we refer to as the asymptotic map. The continuity of $\partial_+$ follows verbatim from the argument in \cite[Page 71]{MMR}. 

Now we consider the asymptotic map for based instantons.  Let $[A, v] \in \tilde{\M}_{\sigma}(Z)$ be a based instanton. From the discussion above, we can choose a representative $(A, v)$ so that $A$ is in standard form with respect to a smooth flat connection $\Gamma$ over $[T, \infty)$ for $T$ large and $A|_{\{t\} \times Y} \to \Gamma$ strongly in $L^2_l$. Let $v(t) \in E_{(t, y_0)}$ be the path obtained by the parallel transport of $v \in E_{z_0}$ along the path $[0, \infty) \times \{y_0\}$ using the connection $A$. We can write $v(t) = \sigma(t) \cdot v$ with $\sigma: [0, \infty) \to SU(2)$ satisfying $\sigma(0) = 1$. Then $\sigma(t)$ satisfies the ODE $\dot{\sigma} + \beta_{y_0}(t) \sigma(t) = 0$, where $\beta_{y_0}(t) = \beta(t)|_{(t, y_0)}$. From this expression we see that $|\dot{\sigma}(t)| = |\beta_{y_0}(t)|$ pointwise. Since $SU(2)$ is compact, it suffices to show that the length of $\sigma(t)$ is finite to ensure the existence of its limit as $t \to \infty$. We note that 
\[
\begin{split}
\|\beta(t)\|_{C^0} & \leq c. \left( \|\beta(t) - \Theta(B(t))\|_{C^0} + \|\Theta(B(t)) \|_{L^2_{7/4}} \right) \\
& \leq c. \left(\|\sigma\|_{\mathcal{P}} e^{-\mu t} + \|b(t)\|_{L^2_{3/2}} \|\grad_{\Gamma} \cs_{\Gamma}(B(t))\|_{L^2} \right), 
\end{split}
\]
where the first inequality makes use of the Sobolev embeddings $L^2_l$, $L^2_{7/4} \hookrightarrow C^0$, and the second inequality makes use of \autoref{l4.4} and (\ref{e4.5}). From the gradient flow equation (\ref{pas}), we know that 
\[
\|\grad_{\Gamma} \cs_{\Gamma}(B(t))\|_{L^2} \leq c. \left(\|\dot{B}(t)\|_{L^2} + \|\sigma\|_{\mathcal{P}} e^{-\mu t} \right). 
\]
Since $\|b(t)\|_{L^2_{3/2}} \to 0$ as $t \to \infty$, we conclude that 
\[
\int_T^{\infty} |\dot{\sigma}(t)| dt \leq \int_T^{\infty} \|\beta(t)\|_{C^0} \leq c. \int_T^{\infty} \|\dot{B}(t)\|_{L^2} + \|\sigma\|_{\mathcal{P}} e^{-\mu t} dt,
\]
which is finite due to \autoref{p5.9}. 

Let’s denote by $\sigma_o = \lim_t \sigma(t)$ and $v_o = \sigma_o \cdot v$. Then the assignment $[\Gamma, v_o]$ to $[A, v]$ gives rise the to asymptotic map
\[
\tilde{\partial}_+: \tilde{\M}_{\sigma}(Z) \longrightarrow \mathcal{R}(Y),
\]
whose continuity follows from the same argument in \cite{MMR}. 
\end{proof}

\subsection{A Refinement of the Asymptotic Map}
From the discussion above, we know a perturbed ASD connection $A$ of finite energy on an end-cylindrical manifold is asymptotic to a limit flat connection $\Gamma$ after gauge transformations. Since the limit set $\chi(Y)$ is a finite dimensional analytic variety, a natural question to ask is whether we can use genuine gradient flowlines over a finite-dimensional object to approximate the  perturbed flowlines given by $A$. To this end, we recall the notion of ‘center manifold’ in \cite[Definition 5.1.2]{MMR}.
\begin{dfn}
Let $H=H_0 \oplus H^{\perp}_0$ be an orthogonal decomposition of a Hilbert space $H$ with $H_0$ a finite dimensional subspace. Let $U \subset H$ be a neighborhood of $0$ in $H$, and $\nu: U \to H$ a vector field over $U$. A $C^k$-center manifold for the pair $(U, \nu)$ is a submanifold $\mathcal{H} \subset H$ arising as the graph of a $C^k$-map $f: U_0 \to H^{\perp}_0$, where $U_0 \subset H_0$ is a neighborhood of $0$ in $H_0$ satisfying the following conditions:\\
(1) $\mathcal{H} \subset U$, and $T_0 \mathcal{H} = H_0$,\\
(2) $\nu_{(x, f(x))} \in T_{(x, f(x))} \mathcal{H}$ for any $x \in U_0$,\\
(3) $\Crit (\nu) \cap U' \subset \mathcal{H}$, where $U' \subset U$ is a smaller open neighborhood of $0$ in $H$. Here $Crit(\nu)$ is a set of critical points of the vector field $\nu$. 
\end{dfn}
Roughly speaking a center manifold is a finite-dimensional submanifold that is locally preserved by the flow of $\nu$ and contains all nearby critical points. Now we let $\Gamma$ be a smooth flat connection on $Y$ with a local slice neighborhood $U_{\Gamma}$ satisfying \autoref{l4.3}. We take $U \subset \mathcal{K}_{\Gamma}$ so that $U_{\Gamma}=\{\Gamma\} + U$. The deformation complex of the space of flat connections at $\Gamma$ is given by 
\[
L^2_{l+1}(Y, \SU(2)) \xrightarrow{d_{\Gamma}} L^2_l(Y, T^*Y \otimes \SU(2)) \xrightarrow{d_{\Gamma}} L_{l-1}^2(Y, \Lambda^2 T^*Y \otimes \SU(2)).
\]
We identify $H^1(Y; \ad \Gamma)=\ker d_{\Gamma} \cap \ker d^*_{\Gamma}$ as a subspace in $\mathcal{K}_{\Gamma}$. Let $H^{\perp}_{\Gamma} \subset \mathcal{K}_{\Gamma}$ be the $L^2$-orthogonal complement of $H^1(Y; \ad \Gamma)$. Corollary 5.1.4 in \cite{MMR} ensures the existence of a $\Stab(\Gamma)$-invariant $C^2$-center manifold $\mathcal{H}_{\Gamma}$ for the pair $(U, -\grad_{\Gamma} cs_{\Gamma})$. We write $W^s_{\Gamma} \subset \mathcal{H}_{\Gamma}$ for the stable set of $-\grad_{\Gamma} cs_{\Gamma}$ on the center manifold, i.e. for any $B \in W^s_{\Gamma}$ the flowline of $-\grad_{\Gamma} cs_{\Gamma}$ starting at $B$ converges to some point in $\mathcal{H}_{\Gamma}$. The most important property of the center manifold is that any perturbed ASD connection on the end can be approximated exponentially closely by a gradient flowline on the center manifold after a sufficiently long time.  

\begin{prop}\label{EPA}
Let $\mathcal{H}_{\Gamma}$ be a center manifold of $\Gamma$ with respect to $(U, -\grad_{\Gamma} cs_{\Gamma})$. Then one can find a local slice neighborhood $V_{\Gamma} \Subset U_{\Gamma}$ of $\Gamma$, and positive constants $T_{\Gamma}, \epsilon_{\Gamma}, K_{\Gamma} > 0$ so that the following hold. Let $A$ be a $\sigma$-perturbed ASD $L^2_{k, loc}$-connection of finite energy over $E$ with $\|\sigma\|_{\mathcal{P}} \leq 1$. Suppose the restricted connection $A|_{[T_{\Gamma}, \infty) \times Y} = B(t) + \beta(t)dt$ satisfies 
\begin{enumerate}
\item$B(t) \in V_{\Gamma}$ for all $t  \geq T_{\Gamma} -1$. 
\item $\|F_A|_{[T_{\Gamma}-1, \infty) \times Y}\|_{L^2} < \epsilon_{\Gamma}.$
\end{enumerate}
Then there is a unique downward gradient flowline, $B_{\Gamma}: [T_{\Gamma}-1, \infty) \to \mathcal{H}_{\Gamma}$, of $\cs_{\Gamma}$ on the center manifold with the following property.  The ASD connection $A^c_{\Gamma}=B_{\Gamma}(t) + \Theta(B_{\Gamma}(t)) dt$ induced from $B_{\Gamma}$ satisfies
\[
\|A - A^c_{\Gamma}\|_{L^2_k([t-1/2, t+1/2] \times Y)} < K_{\Gamma} e^{-{\mu_{\Gamma} \over 2}(t-T_{\Gamma})}, \; \forall t \geq T_{\Gamma},
\]
where $\mu_{\Gamma}$ is the smallest nonzero absolute value of eigenvalues of the Hessian $*d_{\Gamma}|_{\ker d^*_{\Gamma}}$. 
\end{prop}

\begin{proof}
This is Theorem 5.2.2 in \cite{MMR} when one adopts metric perturbations. Its proof carries through our case without any change. From the finite length of $B(t)$ and finite energy of $A$, (1) and (2) follow immediately. Over $[T_{\Gamma}-1, \infty) \times Y$, we decompose $A=\Gamma+b(t)+c(t)+\beta(t)dt$, where $b(t) \in \mathcal{H}_{\Gamma}$, $c(t) \in H_{\Gamma}^{\perp}$. \cite[Lemma 5.4.1]{MMR} tells us that 
\[
\|c(t)\|_{L^2} \leq c. e^{-{\mu_{\Gamma} \over 2}(t-T_{\Gamma})}. 
\]
From (\ref{pas}) and $\mu > {\mu_{\Gamma} \over 2}$, we conclude that 
\[
\| \dot{b}(t) + \grad_{\Gamma} cs_{\Gamma}(B(t))\|_{L^2} \leq c.  e^{-{\mu_{\Gamma} \over 2}(t-T_{\Gamma})}
\]
Then \cite[Lemma 5.3.1]{MMR} gives us the unique gradient flowline $B_{\Gamma}=\Gamma + b_{\Gamma}: [T_{\Gamma} -1, \infty) \to \mathcal{H}_{\Gamma}$ such that 
\[
\| b_{\Gamma}(t) - b(t) \|_{L^2} \leq c. e^{-{\mu_{\Gamma} \over 2}(t-T_{\Gamma})}. 
\]
From \autoref{l4.4}, it follows that 
\[
\begin{split}
\|\beta(t) - \Theta(B_{\Gamma}(t))\|_{L^2} & \leq \|\beta(t) - \Theta(B(t))\|_{L^2} + \|\Theta(B(t)) - \Theta(B_{\Gamma}(t))\|_{L^2} \\
& \leq c. e^{-{\mu_{\Gamma} \over 2}(t-T_{\Gamma})}. 
\end{split}
\]
Now the result follows from the standard bootstrapping argument by applying \autoref{l4.11}.
\end{proof}

\begin{rem}\label{rt0}
After we fix a slice neighborhood $V_{\Gamma}$ of each flat connection $\Gamma$ that satisfies \autoref{EPA}, the constants $T_{\Gamma}, \epsilon_{\Gamma}, K_{\Gamma}$ depend continuously on $[\Gamma]$ and $[A]$. The compactness of $\chi(Y)$ and $\M_{\sigma}(Z)$ (to be proved in the next subsection) implies that these parameters can be chosen uniformly. For later use, we denote the uniform bound on $T_{\Gamma}$’s by $T_{\dagger}$. 
\end{rem}
We write $\M_{\sigma}(Z, V_{\Gamma})$ for the set of equivalence classes $[A]$ of ASD connections on $Z$ for which one can pick up a representative $A$ whose restriction on the end $[T_{\dagger}, \infty) \times Y$ satisfies \autoref{EPA}. Then the assignment 
\begin{equation}
\begin{split}
Q_{\Gamma}: \M_{\sigma}(Z, V_{\Gamma}) & \longrightarrow W^s_{\Gamma} \\
[A] & \longmapsto B_{\Gamma}(T_{\dagger})
\end{split}
\end{equation}
defines a continuous map following the same argument as in \cite[Proposition 5.2.2]{MMR}. Note that any two gauge transformations transforming the restriction of $A$ on the end into a connection of standard form with respect to $\Gamma$ differ by a constant gauge transformation in $\Stab(\Gamma)$, thus the map $Q_{\Gamma}$ is well-defined. The map $Q_{\Gamma}$ refines the asymptotic map $\partial_+$ in the sense that $\partial_+$ is the composition of $Q_{\Gamma}$ with the map sending $B_{\Gamma}(T_{\dagger})$ to the limit point in $V_{\Gamma}$ following the downward gradient flowline of $\cs_{\Gamma}$. We also have the based version 
\begin{equation}
\tilde{Q}_{\Gamma}: \tilde{\M}_{\sigma}(Z, V_{\Gamma}) \to W^s_{\Gamma} \times_{\Stab{\Gamma}} E'_{y_0}.
\end{equation}

\subsection{Compactness of $\M_{\sigma}(Z)$}

In the presence of mere metric perturbations, the compactness of the moduli space of instantons over compact $4$-manifolds is well-understood due to the work of Uhlenbeck \cite{U82,U82b}. For the case of end-cylindrical $4$-manifolds, the compactification involves two parts. One is the bubbling phenomenon, namely energy concentration of a subsequence of instantons on a discrete subset of points. As pointed out in \cite{K04}, the extra subtlety caused by holonomy perturbations comes from its global effect on the part outside the geodesics balls where one runs Uhlenbeck's convergence argument. As a consequence, away from those bubbling points only $L^p_1$-convergence, $p \geq 2$, can be achieved rather than $C^{\infty}$-convergence in the metric-perturbed case. The other possibility is the escape of energy of a subsequence of instantons along the cylindrical end, which could also be regarded as an energy concentration phenomenon but occurring at ‘infinity’. Due to the exponential decay assumption of perturbations, the convergence of this type is actually in $C^{\infty}$ class. For the charge-zero moduli spaces of our concern, we are going to argue that neither of the phenomena could happen when perturbations are sufficiently small.

\begin{prop}\label{p5.13}
One can find a constant $\epsilon_3 > 0$ so that $\M_{\sigma}(Z)$ is compact if $\|\sigma\|_{\mathcal{P}} \leq \epsilon_3$. 
\end{prop}

The bubbling phenomenon can be easily excluded. Let’s take $[A] \in \M_{\sigma}(Z)$. Then $\kappa(A) = 0$ implies that 
\[
\|F^+_A\|_{L^2(Z)} = \|F^-_A\|_{L^2(Z)} \qquad \|F^+_A\|_{L^2(Z)} = \|\sigma(A)\|_{L^2(Z)}. 
\]
Combining the $C^0$-bound of the holonomy perturbation $\sigma$ in \autoref{l3.1} and \autoref{l3.5}, we get that 
\begin{equation}\label{e5.20}
\|\sigma(A)\|_{L^2(Z)} \leq \left(K_0 \sqrt{\vol(M)} + K’_0 \sqrt{\vol(Y)}e^{-\mu}/\mu\right)\|\sigma\|_{\mathcal{P}}. 
\end{equation}
Thus the energy $\mathcal{E}(A) = 2\|\sigma(A)\|_{L^2(Z)}$ admits a uniform bound by $\|\sigma\|_{\mathcal{P}}$. The occurrence of a bubbling point from a convergent sequence of instantons would occupy energy of amount $8\pi^2$. By choosing $\|\sigma\|_{\mathcal{P}}$ sufficiently small so that the right-hand side of (\ref{e5.20}) is less than $4\pi^2$, the bubbling phenomenon cannot happen. More directly, one can argue using Uhlenbeck’s gauge fixing \cite{U82}. Over each geodesics ball $D$, one can find a threshold $\epsilon_D > 0$ so that whenever $\mathcal{E}(A|_{D}) < \epsilon_D$ the $L^2_2$-norm of the connection form over $D’ \Subset D$, up to gauge transformations, admits a uniform bound by $\|\mathcal{\sigma}\|_{\mathcal{P}}$ and $\mathcal{E}(A|_D)$. Thus one can find a strongly $L^2_1$-convergent subsequence over $D$. Since $Z$ has bounded geometry, the threshold $\epsilon_D$ can be chosen to be uniform for a good collection of geodesics balls that cover $Z$. Then for any compact region of $Z$, one can apply the gauge patching argument to achieve global $L^2_1$-convergence.

Let’s write $Z_T = M \cup [0, T] \times Y$ for any $T \geq 1$. Recall a perturbation takes the form $\sigma = \sigma_{\pmb{\omega}} + \sigma_{\pmb{\pi}}$ with $\supp \sigma_{\pmb{\omega}} \subset Z_1$ and $\sigma_{\pmb{\pi}}$ defined slicewise over the end. Thus for any connection $A$ over $Z_T$ or $I \times Y$ with $I \subset [1, \infty)$ a compact interval, the perturbation $\sigma(A)$ is well-defined. Let’s write
\[
\begin{split}
\M_{\sigma}(Z_T): & =\left\{ A\in \A_k(Z_T) : F^+_A = \sigma(A) \right\}\Bign/\G_{k+1} \\
\M_{\sigma}(I \times Y): &=\left\{ A \in \A_k(I \times Y): F^+_A = \sigma(A) \right\}\Bign/\G_{k+1}
\end{split} 
\]
for the moduli spaces of perturbed instantons of class $L^2_k$ over $Z_T$ and $I \times Y$ respectively. Note that we don’t impose any assumption on the instanton charge of these moduli spaces. 

\begin{lem}\label{l5.14}
There exists a constant $\epsilon > 0$ so that the moduli spaces of perturbed instantons with energy no larger than $\epsilon$
\[
\begin{split}
\M^{\epsilon}_{\sigma}(Z_T):&= \left\{[A] \in \M_{\sigma}(Z_T): \mathcal{E}(A) \leq \epsilon \right\}\\
 \M^{\epsilon}_{\sigma}(I \times Y):&= \left\{[A] \in \M_{\sigma}(I \times Y): \mathcal{E}(A) \leq \epsilon \right\} 
\end{split}
\]
are both compact, where $T \geq 1$, and $I \subset [1, \infty)$ is a compact sub-interval. 
\end{lem}

\begin{proof}
Let $A_n$ be a sequence of perturbed ASD connections whose energy $\mathcal{E}(A_n)$ is uniformly bounded by $\epsilon_4$. The argument above shows that by choosing $\epsilon$ sufficiently small, one can find gauge transformations $g_n$ of class $L^2_{k+1}$ so that, after passing to a subsequence, $g_n \cdot A_n$ converges strongly in $L^2_1$ to an $L^2_2$ connection $A_o$. It follows from \cite[Proposition 3.5]{K04} that $A_o$ solves the perturbed ASD equation $F^+_{A_o} = \sigma(A_o)$. Choosing $\epsilon$ even smaller, we can apply the bootstrapping argument as in the proof of \autoref{l4.5} to deduce that $A_o$ actually lies in the class $L^2_k$, which shows that the limit $[A_o] \in \M^{\epsilon}_{\sigma}$. 
\end{proof}

Now it remains to exclude the occurrence of energy escape along the cylindrical end. In the non-perturbed or metric perturbed case, the energy of a downward gradient flowline is twice the drop of the Chern--Simons functional. Since the character variety $\chi(Y)$ is compact, one can obtain a minimum by which the Chern--Simons functional can possibly drop along non-constant flowlines. Once the energy of instantons is less than twice of this minimum, there is no energy escape. However for our perturbed case, there could be a small room for the Chern--Simons functional to increase along the end. It turns out that such an error cannot contribute to ‘broken’ flowlines supporting the loss of energy. 

\begin{proof}[Proof of \autoref{p5.13}]
Let $[A_n] \in \M_{\sigma}(Z)$ be a sequence of instantons perturbed by $\sigma$. We write $\partial_+([A_n]) = [\Gamma_n] \in \chi(Y)$ for the asymptotic values. Since $\chi(Y)$ is compact, passing to a subsequence, $[\Gamma_n]$ converges to $[\Gamma_o] \in \chi(Y)$ for which we can choose a smooth flat connection $\Gamma_o$ as a representative. Choosing $\|\mathcal{\sigma}\|_{\mathcal{P}}$ small, and applying \autoref{p5.9}, we can find a local slice neighborhood $U_{\Gamma_o}$ of $\Gamma_o$ and a moment $T_o > 0$ such that $A_n|_{[T_o, \infty) \times Y}$ is in standard form with respect to $\Gamma_o$ and $B_n(t):= A_n|_{\{t\} \times Y} \in U_{\Gamma_o}$ for $t \in [T_o, \infty)$ and all $n$ in the subsequence, after possibly gauge transformations. 

Now applying \autoref{l4.11} to the intervals $I_m:=[T_o+m, T_o+m+1]$ with $m \in \mathbb{N}$, the uniform $L^2_{k+1}$-bound gives us a $L^2_k$-convergent subsequence of $A_n|_{I_m \times Y}$ converging to $A^m_o$ for each $m$, which solves the perturbed ASD equation by \cite[Proposition 3.5]{K04}. Then it follows from \autoref{l5.14} and a diagonal process, we get an $SU(2)$-connection $A_o$ of class $L^2_k$ that solves the perturbed equation $F^+_{A_o} = \sigma(A_o)$ such that, after passing to subsequences, $A_n$ converges to $A_o$ in $L^2_{k, loc}$-topology. 

It remains to show that $\kappa(A_o) = 0$. To this end, we write $\Gamma’_o = \partial_+ (A_o) \in U_{\Gamma}$ for the asymptotic flat connection of $A_o$, and claim that $\Gamma_o’ = \Gamma_o$. Suppose this is not true. By shrinking $U_{\Gamma_o}$ if necessary at the start, we can arrange that $\Gamma_o$ and $\Gamma’_o$ stay in the same connected component of $U_{\Gamma_o}$ thus share the same value of the Chern--Simons functional. Then we can find neighborhoods of $V_{\Gamma_o}$ and $V_{\Gamma’_o}$ of $\Gamma_o$ and $\Gamma’_o$ respectively in $U_{\Gamma_o}$ such that for all $B \in V_{\Gamma_o}$ and $B’ \in V_{\Gamma_o’}$
\[
\text{ the $L^2_l$-distance } \dist(V_{\Gamma_o}, V_{\Gamma’_o}) \geq \epsilon_V \text{ and } |\cs(B) - \cs(B’)| < \epsilon’_V
\]
with respect to some positive constants $\epsilon_V \gg \epsilon_V’$. We write $B_o(t) :=A_o|_{\{t\} \times Y}$ and fix $T’_0$ so that $B_o(t) \in V_{\Gamma’_o}$ for all $t > T’_0$. The $L^2_{k, loc}$ convergence of $A_n$ implies that we can choose an increasing sequence $T’_{\alpha} \to \infty$, $\alpha \in \N$, and a corresponding subsequence $n_{\alpha} \to \infty$ such that for $B_{n_{\alpha}} \in V_{\Gamma’_o}$ for all $t \in [T’_{\alpha}-1, T’_{\alpha}+1]$. Moreover, the convergence of $B_n(t) \to \Gamma_n$ and $\Gamma_n \to \Gamma_o$ gives us a sequence $T_{\alpha}$ such that $B_{n_{\alpha}}(t) \in V_{\Gamma}$ for all $t > T_{\alpha} - 1$. Note that the sequence $\{T_{\alpha}\}$ must be unbounded, otherwise $B_o(t) \in V_{\Gamma}$ for any $t$ greater than the uniform bound of $\{T_{\alpha}\}$ due to the $L^2_{k,loc}$-convergence of $A_n$ to $A_o$. Passing to a further subsequence, we may assume $T_{\alpha} \to \infty$. Now we know 
\[
\begin{split}
\epsilon_V & \leq \|B_{n_{\alpha}}(T_{\alpha}) - B_{n_{\alpha}}(T’_{\alpha})\|_{L^2_l} \\
& \leq c. \left( \|B_{n_{\alpha}}(T_{\alpha}) - B_{n_{\alpha}}(T’_{\alpha})\|_{L^2} + e^{-\mu T’_{\alpha}} \right) \\
& \leq c. \left( \int_{T’_{\alpha}}^{T_{\alpha}} \|\dot{B}_{n_{\alpha}}(t)\|_{L^2} dt + e^{-\mu T’_{\alpha}} \right) \\
& \leq c. \left(\|F_{A_{n_{\alpha}}}\|_{L^2([T’_{\alpha}, T_{\alpha}] \times Y)} + e^{-\mu \theta T’_{\alpha}} \right),
\end{split}
\]
where the second inequality used the regularity estimate \autoref{l4.11}, and the fourth inequality used the length estimate \autoref{p5.7}. Now we know 
\begin{equation}\label{e5.21}
\begin{split}
\cs(B(T’_{\alpha})) - \cs(B(T_{\alpha})) & = \frac{1}{2} \int_{[T’_{\alpha}, T_{\alpha}] \times Y} \tr F_A \wedge F_A \\
& = \frac{1}{2}\|F_A\|_{L^2}- \|F^+_A\|_{L^2} \\
& \geq \frac{1}{2} \|F_A\|_{L^2} - c. e^{-\mu T’_{\alpha}} \|\sigma\|_{\mathcal{P}}. 
\end{split}
\end{equation}
Note that $|\cs(B(T_{\alpha})) - \cs(B(T’_{\alpha}))| < \epsilon’_V$ from our construction. We conclude that 
\begin{equation}\label{e5.22}
\epsilon_V \leq \mathfrak{c} \left(\epsilon_V’ + \|\sigma\|_{\mathcal{P}}e^{-\mu T’_{\alpha}} + e^{-\mu \theta T’_{\alpha}} \right),
\end{equation}
where the constant $\mathfrak{c}$ is independent of the choices of $V_{\Gamma_o}$ and $V_{\Gamma’_o}$. By shrinking $V_{\Gamma_o}$ and $V_{\Gamma’_o}$, we can arrange that $\epsilon_V > 2\mathfrak{c} \epsilon’_V$. Then (\ref{e5.22}) leads to a contradiction from the fact that $T’_{\alpha} \to \infty$. This completes the proof of the claim. 

Finally, we go back to show that $\kappa(A_o) = 0$. Equivalently this is $\mathcal{E}(A_o) = 2\|\sigma(A_o)\|_{L^2(Z)}$. Since the $L^2$-norm of the perturbation $\sigma(A)$ has uniform exponential decay along the end, the $L^2_{k, loc}$-convergence of $A_n$ implies that $\|\sigma(A_n)\|_{L^2(Z)} \to \|\sigma(A_o)\|_{L^2(Z)}$. Since $B_n(t) \to \Gamma_o$, (\ref{e5.21}) implies that for any $T > T_o$
\[
\mathcal{E}(A_n|_{[T, \infty) \times Y}) \leq c. \left(\cs(B_n(T)) - \cs(\Gamma_o) + e^{-\mu T}\|\sigma\|_{\mathcal{P}} \right),
\]
where the constant is independent of $n$. The same inequality holds for $A_o$ due to the convergence $B_o(T) \to \Gamma_o$. Thus 
\begin{equation}\label{e5.23}
\begin{split}
\Bigl\lvert\mathcal{E}(A_n) - \mathcal{E}(A_o)\Bigr\rvert & \leq \Bigl\lvert\mathcal{E}(A_n|_{Z_T}) - \mathcal{E}(A_o|_{Z_T})\Bigr\rvert \\
& + c.  \Big(\cs(B_n(T)) + \cs(B_o(T)) - 2\cs(\Gamma_o) + 2e^{-\mu T}\|\sigma\|_{\mathcal{P}} \Big)
\end{split}
\end{equation}
Now for any given $\epsilon > 0$, we may choose $T_{\epsilon}$ and $N_{\epsilon}$ so that the second term in the right-hand side of (\ref{e5.23}) is less than $\epsilon/2$ for $T=T_{\epsilon}$ and all $n > N_{\epsilon}$. Then by making $n$ larger, we can arrange the first term in the right-hand side of (\ref{e5.23}) to be less than $\epsilon/2$ as well. This proves that $\mathcal{E}(A_n) \to \mathcal{E}(A_o)$. Then the result follows from the fact that $\mathcal{E}(A_n) = 2\|\sigma(A_n)\|_{L^2(Z)}$ for all $n$. 
\end{proof}

\section{Transversality on the Irreducible Moduli Space}\label{tims}
Following the idea of Morgan--Mrowka--Ruberman \cite{MMR}, to improve the regularity of the refined map $Q_{\Gamma}$ one can first embed the moduli space $\tilde{\M}_{\sigma}(Z, V_{\Gamma})$ into a larger one which they refer to as a ‘thickened moduli space’, then prove transversality results there. 

The thickened moduli space is defined with the help of thickening data about smooth flat connections on $E'$, which we now recall from \cite{MMR}. The motivation for introducing the thickening data is to resolve the issue that the gradient vector field $\grad_{\Gamma} cs_{\Gamma}$ is incomplete on a local slice neighborhood of $\Gamma$. So one artificially truncates this vector field via a cut-off function. In this way all the local properties near $\Gamma$ are preserved, and one can apply analysis tools without worrying about the incompleteness. 

\begin{dfn}
Let $\Gamma$ be a smooth flat connection on $E'$. We choose 
\begin{enumerate}[label=(\alph*)]
\item a center manifold $\mathcal{H}_{\Gamma}$ of $\Gamma$, 
\item neighborhoods $V_{\Gamma} \subset V_{\Gamma}' \subset U_{\Gamma}$ as in \autoref{rt0}, 
\item a cut-off function $\varphi_{\Gamma}: \mathcal{H}_{\Gamma} \to [0,1]$ such that $\varphi_{\Gamma} \equiv 1$ on $V'_{\Gamma} \cap \mathcal{H}_{\Gamma}$ and $\supp \varphi_{\Gamma} \subset U'_{\Gamma} \cap \mathcal{H}_{\Gamma}$ for some $U'_{\Gamma} \subset U_{\Gamma}$.
\end{enumerate}
We refer to the triple $\mathcal{T}_{\Gamma} = (\mathcal{H}_{\Gamma}, V_{\Gamma}, \varphi_{\Gamma})$ as a set of thickening data about $\Gamma$. 
\end{dfn}

Given a thickening triple $\mathcal{T}_{\Gamma}$, we write 
\[
\mathcal{H}_{\Gamma}^{out} := \varphi_{\Gamma}^{-1}(0,1] \text{ and } \mathcal{H}_{\Gamma}^{in}:= \varphi_{\Gamma}^{-1}(1). 
\] 
We denote by 
\[
\Xi^{tr}_{\Gamma} :=- \varphi_{\Gamma} \cdot \grad_{\Gamma} cs_{\Gamma}|_{\mathcal{H}_{\Gamma}}
\]
the truncated downward gradient vector field over the center manifold $\mathcal{H}_{\Gamma}$. Then $\Xi^{tr}_{\Gamma}$ is a complete vector field over $\mathcal{H}_{\Gamma}$ despite that $\mathcal{H}_{\Gamma}$ is only defined near $\Gamma$. For each $h \in \mathcal{H}^{out}_{\Gamma}$, we let $B_h:[T_{\dagger}, \infty) \to \mathcal{H}_{\Gamma}$ be the unique flowline of $\Xi^{tr}_{\Gamma}$ such that $B_h(T_0)=h$. Now we extend the connection $B_h(t) + \Theta(B_h(t))dt$ smoothly to a connection $A_h$ on the entire manifold $Z$ so that over the compact part $A_h|_{Z \backslash [T_0, \infty) \times Y}$ depends on $h$ smoothly. To put the weighted Sobolev space into the package, we choose a weight $\delta_{\Gamma} \in (0, {\mu_{\Gamma} \over 2} )$ for each $[\Gamma] \in \chi(Y)$. 

\begin{dfn}\label{d2.17}
Given $h \in \mathcal{H}_{\Gamma}^{out}$, we fix a connection $A_h$ as above, and write
\[
\A_{k, \delta_{\Gamma} }(Z, \mathcal{T}_{\Gamma}, h):=\{ A \in \A_{k, loc}(Z) : A-A_h \in L^2_{k, \delta_{\Gamma}}(Z, T^*Z \otimes \SU(2))\}.
\] 
We denote the union by 
\[
\A_{k, \delta_{\Gamma}}(Z, \mathcal{T}_{\Gamma}) := \bigcup_{h \in \mathcal{H}_{\Gamma}^{out}} \A_{k, \delta_{\Gamma}}(Z, \mathcal{T}_{\Gamma}, h).
\]
The gauge group $\G_{k+1, \delta_{\Gamma}}(Z, \mathcal{T}_{\Gamma})$ that preserves $\A_{k, \delta_{\Gamma}}(Z, \mathcal{T}_{\Gamma})$ consists of all $L^2_{k+1, loc}$ gauge transformations $g$ such that there exists $\tau \in \Stab(\Gamma)$ satisfying 
\[
g|_{[T_0, \infty) \times Y} \circ \tau - \id \in L^2_{k+1, \delta_{\Gamma}}([T_0, \infty) \times Y, SU(2)).
\]
Finally we pick a cut-off function $\varphi: Z \to [0, 1]$ such that $\varphi|_{[T_0+1, \infty) \times Y} \equiv 1$ and $\varphi|_{[0, T_0] \times Y} \equiv 0$. 
The thickened moduli space with respect to the thickening data $\mathcal{T}_{\Gamma}$ perturbed by $\sigma \in \mathcal{P}_{\mu}$ is defined to be 
\[
\M_{\sigma}(Z, \mathcal{T}_{\Gamma}):= \Bigl\{ A \in \A_{k, \delta_{\Gamma}}(Z, \mathcal{T}_{\Gamma}) : F^+_A - \varphi F^+_{A_h} = \sigma(A), \kappa(A)=0 \Bigr\} \Bign/ \G_{k+1, \delta_{\Gamma}}(Z, \mathcal{T}_{\Gamma}). 
\]
The thickened based moduli space $\tilde{\M}_{\sigma}(Z, \mathcal{T}_{\Gamma})$ is defined similarly.
\end{dfn}

For the rest of this section, we are concerned with irreducible connections. The construction of the thickened moduli space gives us a map 
\begin{equation}
\begin{split}
P_{\Gamma}: \M^*_{\sigma}(Z, \mathcal{T}_{\Gamma}) & \longrightarrow \mathcal{H}_{\Gamma} \\
[A] & \longmapsto h,
\end{split}
\end{equation}
where $h$ is the element specified in \autoref{d2.17}. We also have the based version
\begin{equation}
\tilde{P}_{\Gamma}: \tilde{\M}^*_{\sigma}(Z, \mathcal{T}_{\Gamma}) \to \mathcal{H}_{\Gamma} \times_{\Stab{\Gamma}} E'_{y_0}. 
\end{equation}
Note that the perturbations $\sigma: \A_{k, loc} \to L^2_{k, \mu}$ are smooth maps. Following from \cite[Section 7.3]{MMR} the based thickened moduli space $\tilde{\M}^*_{\sigma}(Z, \mathcal{T}_{\Gamma})$ is a $C^2$-manifold and $\tilde{P}_{\Gamma}$ is a $C^2$-map. From \autoref{EPA}, when $\|\sigma\|_{\mathcal{P}} \leq 1$ we have an identification 
\[
\tilde{\M}^*_{\sigma}(Z, V_{\Gamma}) \simeq \tilde{P}_{\Gamma}^{-1} (W^s_{\Gamma} \cap V_{\Gamma} \times_{\Stab{\Gamma}} E'_{y_0}).
\]
Denote the embedding by $j: \tilde{\M}_{\sigma}(Z, V_{\Gamma}) \hookrightarrow \tilde{M}_{\sigma}(Z, \mathcal{T}_{\Gamma})$. We obtain the following commutative diagram: 
\begin{equation}\label{ic}
\begin{tikzcd}
 \tilde{\M}^*_{\sigma}(Z, V_{\Gamma})  \arrow[hookrightarrow]{r}{j} \ar[d, "\tilde{Q}_{\Gamma}"] & \tilde{\M}^*_{\sigma}(Z, \mathcal{T}_{\Gamma})  \ar[d, "\tilde{P}_{\Gamma}"] \\
W^s_{\Gamma}\times_{\Stab{\Gamma}} E'_{y_0}  \ar[hookrightarrow]{r} & \mathcal{H}_{\Gamma} \times_{\Stab{\Gamma}} E'_{y_0}
\end{tikzcd} 
\end{equation}

As mentioned above, we introduce the thickened moduli space mainly to establish the transversality result. The following result is a variance of \cite[Theorem 9.0.1]{MMR} in our case. 

\begin{prop}\label{TSC}
Let $\mathfrak{S} \subset \mathcal{H}_{\Gamma} \times_{\Stab{\Gamma}} E'_{y_0}$ be the union of a finite set of smooth submanifolds. Then with respect to a generic perturbation $\sigma$, $\tilde{P}_{\Gamma}$ is transverse to $\mathfrak{S}$. Moreover the dimension of the based thickened irreducible moduli space $\tilde{\M}^*_{\sigma}(Z, \mathcal{T}_{\Gamma})$  is given by 
\begin{equation}\label{dimm}
-{3 \over 2}(\chi(Z)+\sigma(Z))+{h^1_{\Gamma}- h^0_{\Gamma} \over 2}+{\rho(\Gamma) \over 2}+ 3,
\end{equation}
where $h^i_{\Gamma}=\dim H^i(Y; \ad \Gamma)$, $i=0, 1$, $\rho(\Gamma)$ is the Atiyah--Patodi--Singer $\rho$-invariant of the odd signature operator twisted by $\Gamma$ in \cite{APS2}, $\chi(Z)$ is the Euler characteristic, and $\sigma(Z)$ is the signature. 
\end{prop}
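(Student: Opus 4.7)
The plan is to run a universal-moduli-space / Sard--Smale argument adapted to the thickened setup. First I would form the universal based thickened moduli space
\begin{equation*}
\tilde{\M}^*(Z, \mathcal{T}_{\Gamma}) := \bigl\{\bigl([A,v], \pmb{\omega}\bigr) : F_A^+ - \varphi F_{A_h}^+ = \sigma_{\pmb{\omega}}(A),\ \kappa(A) = 0 \bigr\} / \G_{k+1,\delta_{\Gamma}}
\end{equation*}
as a subset of $\bigl(\A_{k,\delta_{\Gamma}}(Z,\mathcal{T}_{\Gamma}) \times E'_{y_0}\bigr) \times W$, and reduce its Banach-manifold structure to the surjectivity of the combined linearization
\begin{equation*}
\mathcal{L}(a, \dot h, \pmb{\eta}) := d_A^+ a - \varphi \cdot \delta_{\dot h} F_{A_h}^+ - D\sigma_{\pmb{\omega}}|_A(a) - \sigma_{\pmb{\eta}}(A)
\end{equation*}
onto $L^2_{k-1,\delta_{\Gamma}}(\Lambda^+ \otimes \SU(2))$ at every irreducible solution. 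The first two summands form a Fredholm operator by the weighted Sobolev estimates of \cite{MMR}, so the implicit function theorem applies as soon as surjectivity of $\mathcal{L}$ is established.

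Surjectivity of $\mathcal{L}$ will rest on the perturbation direction $\pmb{\eta} \in W$ alone. Let $\xi$ lie in the $L^2$-cokernel; then the $a$-equation yields $(d_A^+)^*\xi + (D\sigma_{\pmb{\omega}}|_A)^*\xi = 0$ and the $\pmb{\eta}$-equation forces $\langle \sigma_{\pmb{\eta}}(A), \xi\rangle = 0$ for every $\pmb{\eta} \in W$. Since the zeroth-order operator $(D\sigma_{\pmb{\omega}}|_A)^*$ has smooth kernel localized on the balls $N_\alpha$, Aronszajn's unique continuation theorem applied to the first-order elliptic system satisfied by $\xi$ implies that $\xi$ cannot vanish on any open subset unless $\xi \equiv 0$ on the component. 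If $\xi \not\equiv 0$, pick a ball $N_{\alpha_0}$ on which $\xi$ does not vanish; the $C^1$-density of the loop family $\{q_{\alpha}(-, x) : \alpha \in \N,\ x \in \Int(N_{\alpha})\}$ at each point of $Z$, irreducibility of $A$, and the estimates of Proposition \ref{ehp} then allow us to tune a single form $\omega_{\alpha_0}$ so that $\langle V_{q_{\alpha_0}, \omega_{\alpha_0}}(A), \xi\rangle_{L^2} \neq 0$, contradicting orthogonality. Hence $\mathcal{L}$ is surjective and $\tilde{\M}^*(Z, \mathcal{T}_{\Gamma})$ is a $C^2$-Banach submanifold. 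This surjectivity is the main obstacle of the proof: one has to intertwine unique continuation for a nonlocal perturbed linearization with the density of holonomy loops in the weighted Sobolev setup on a noncompact cylindrical-end manifold.

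Once this is in hand, I would apply the Sard--Smale theorem to the composite
\begin{equation*}
\tilde{\M}^*(Z, \mathcal{T}_{\Gamma}) \xrightarrow{\ \tilde{P}_{\Gamma} \times \pi_W \ } \bigl(\mathcal{H}_{\Gamma} \times_{\Stab(\Gamma)} E'_{y_0}\bigr) \times W
\end{equation*}
together with a standard parametric transversality argument: a residual subset of $\pmb{\omega} \in W$ then furnishes simultaneously smoothness of the fiber $\tilde{\M}^*_{\sigma_{\pmb{\omega}}}(Z, \mathcal{T}_{\Gamma})$ and transversality of $\tilde{P}_{\Gamma}$ to every member of the pre-assigned finite family of submanifolds in $\mathcal{H}_{\Gamma} \times_{\Stab(\Gamma)} E'_{y_0}$.

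For the dimension I would identify $\dim \tilde{\M}^*_{\sigma}(Z, \mathcal{T}_{\Gamma})$ with the Fredholm index of the full linearization at a smooth irreducible solution. By the Atiyah--Patodi--Singer theorem \cite{APS2} applied to the twisted ASD deformation operator on $(Z, g)$ with flat asymptotic limit $\Gamma$, the interior integrand produces the Pontryagin-free part of the closed-manifold formula, $-\tfrac{3}{2}(\chi(Z)+\sigma(Z))$; the eta invariant of the odd signature operator twisted by $\Gamma$, corrected by its untwisted counterpart, contributes $\rho(\Gamma)/2$; the extended $L^2$-kernel/cokernel defect at the boundary accounts for $(h^1_{\Gamma} - h^0_{\Gamma})/2$; and the remaining $+3 = \dim SU(2)$ records the framing freedom carried by the fiber $E'_{y_0}$.
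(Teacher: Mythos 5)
Your proposal follows essentially the same route as the paper: a universal moduli space with a Sard--Smale/parametric transversality argument, where surjectivity of the linearization comes from combining the Fredholmness of the weighted operator $d_A^+\oplus d_{A,\delta}^*$ with the density of the holonomy perturbation directions at irreducible connections (the paper outsources the latter to \cite[Lemma 13]{K04} and the index computation to \cite[Chapter 8]{MMR}, whereas you sketch both). The only slight imprecision is in the dimension bookkeeping: the term ${h^1_\Gamma - h^0_\Gamma \over 2}$ is not purely the APS boundary defect but the sum of that defect (which contributes $-{h^0_\Gamma + h^1_\Gamma\over 2}$) and the $h^1_\Gamma$-dimensional freedom in the thickening parameter $h\in\mathcal{H}_\Gamma^{out}$ — a minor point that does not affect the result.
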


\begin{proof}
It only remains to show the transversality of $\tilde{P}_{\Gamma}$. The computation of the formal dimension is the same as that in \cite[Chapter 8]{MMR}. Consider the map 
\[
\begin{split}
\mathcal{F}: \mathcal{P}_{\mu} \times \A^*_{k, \delta_{\Gamma}}(Z, \mathcal{T}_{\Gamma}) &\longmapsto L^2_{k-1, \delta_{\Gamma}}(Z, \Lambda^+\otimes \SU(2)) \\
(\sigma, A) & \longmapsto F^+_A - \varphi F^+_{A_h} - \sigma(A).
\end{split}
\]
Denote by $\mathcal{F}_h:=\mathcal{F}|_{\mathcal{P}_{\mu} \times \A_{k, \mu}(Z, \mathcal{T}_{\Gamma}, h)}$ the restricted map. Then the differential of $\mathcal{F}_h$ is 
\[
D\mathcal{F}_h|_{(\sigma, A)}(\pmb{\omega}, \pmb{\pi}, a) = d^+_Aa - D\sigma|_A a - \sigma_{\pmb{\omega}}(A) - \sigma_{\pmb{\pi}}(A),
\]
where $a \in L^2_{k,\mu}(T^*Z \otimes \SU(2))$, $\pmb{\omega} \in \mathcal{P}_1$, and $\pmb{\pi} \in \mathcal{P}_2$. Due to the unique continuation of instantons over the cylinder (c.f \cite[Section 4.3.4]{DK90} and \cite[Section 7]{KM07}), the irreducibility of $A$ implies that $A|_M$ and $A|_{[0, \infty) \times Y}$ are both irreducible. Then \cite[Lemma 13]{K04} implies that the image of $\sigma_{\pmb{\omega}}(A)$ is dense in $L^2_{k-1}(M, \Lambda^+ \otimes \SU(2))$, and \cite[Proposition 5.17]{D02} implies that the image of $\sigma_{\pmb{\pi}}(A)$ is dense in $L^2_{k-1, \delta_{\Gamma}}([0, \infty) \times Y, \Lambda^+\otimes \SU(2))$, as we vary $\pmb{\omega}$ and $\pmb{\pi}$. On the other hand, the operator 
\[
d^*_{A, \tau\delta_{\Gamma}} \oplus d_A^+: L^2_{k-1, \delta_{\Gamma}}(Z, \SU(2)) \to  L^2_{k, \delta_{\Gamma}}(Z, \Lambda^1 \otimes \SU(2)) \oplus L^2_{k-1, \delta_{\Gamma}}(Z, \Lambda^+ \otimes \SU(2))
\]
where $d^*_{A, \tau\delta_{\Gamma}} = e^{-\tau \delta_{\Gamma}} d^*_A e^{\tau \delta_{\Gamma}}$, and $\tau: Z \to \R$ is a smooth function such that $\tau|_{\{t\} \times Y} =t$, is Fredholm except at a discrete set of $\R$ from the theory of Atiyah--Patodi--Singer \cite{APS1}. We may choose $\delta_{\Gamma}$ in the first place to make this operator Fredholm. In particular we see that the image of $d^+_A$ is closed and has finite dimensional cokernel. Thus we conclude that $\mathcal{F}_h$ is a submersion. 

Consider the map 
\[
\tilde{P}_{\Gamma}': \mathcal{P}_{\mu} \times \big( \A^*_{k, \delta_{\Gamma}}(Z, \mathcal{T}_{\Gamma}) \times_{\G} E_{z_0} \big) \longrightarrow \mathcal{H}_{\Gamma} \times_{\Stab(\Gamma)} E'_{y_0} 
\]
sending $(\sigma, [A, v])$ to the pair $[h, v']$, where $v'$ is the limit of $v$ under the parallel transport by $A$. By the construction of $\A^*_{k, \delta_{\Gamma}}(Z, \mathcal{T}_{\Gamma})$, the map $\tilde{P}_{\Gamma}'$ is a submersion. Since $\mathcal{F}$ is gauge equivariant and $\mathcal{F}_h$ is a submersion, we conclude that the restricted map
\[
\tilde{P}'_{\Gamma}: \mathcal{F}^{-1}(0) \times_{\G} E_{z_0} \longmapsto \mathcal{H}_{\Gamma} \times_{\Stab(\Gamma)} E'_{y_0}
\]
is also a submersion. Denote by $\Pi: \mathcal{P}_{\mu} \times \big( \A^*_{k, \delta_{\Gamma}}(Z, \mathcal{T}_{\Gamma}) \times_{\G} E_{z_0} \big) \to \mathcal{P}_{\mu}$ the projection onto the first factor. Then the Sard--Smale theorem tells us that $\tilde{P}_{\Gamma}=\tilde{P}'_{\Gamma}|_{\Pi^{-1}(\sigma)}$ is transverse to a given smooth submanifold for a generic perturbation $\sigma$. 
\end{proof}

Before extracting more information of the asymptotic map from \autoref{TSC}, Let us first recall the Kuranishi obstruction map at a flat connection $\Gamma$. 
\begin{thm}[{\cite[Theorem 12.1.1]{MMR}}]\label{Kur}
Let $\Gamma$ be a $C^{\infty}$ flat connection on $Y$. Then there exists a $\Stab(\Gamma)$-invariant neighborhood $V$ of $0$  in $H^1(Y; \ad \Gamma)$, a $\Stab(\Gamma)$-invariant neighborhood $U$ of $\Gamma$ in $\mathcal{S}_{\Gamma}$, and $C^{\infty}$ $\Stab(\Gamma)$-equivariant maps  
\begin{equation}
\mathfrak{p}_{\Gamma}: V  \to U \text{ and } 
\mathfrak{o}_{\Gamma}: V  \to H^2(Y; \ad \Gamma) 
\end{equation}
satisfying 
\begin{enumerate}
\item[\upshape (i)] $\mathfrak{p}_{\Gamma}$ is an embedding whose differential at $0$ coincides with the inclusion map $H^1(Y; \ad \Gamma) \hookrightarrow \mathcal{K}_{\Gamma}$. 
\item[\upshape (ii)] The restriction of $\mathfrak{p}_{\Gamma}|_{\mathfrak{o}_{\Gamma}^{-1}(0)}$ is a homeomorphism onto the space of flat connections in $U$. 
\end{enumerate}
\end{thm}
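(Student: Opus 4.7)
The plan is to adapt the classical Kuranishi finite-dimensional model to the Coulomb slice $\mathcal{S}_\Gamma$. I would first express connections near $\Gamma$ in the slice as $\Gamma + b$ with $b \in \mathcal{K}_\Gamma = \ker d_\Gamma^*$, so that the flatness equation becomes the smooth map
\[
F \colon \mathcal{K}_\Gamma \longrightarrow L^2_{l-1}(\Lambda^2 T^*Y \otimes \SU(2)), \qquad F(b) = d_\Gamma b + \tfrac{1}{2}[b \wedge b].
\]
Hodge theory applied to the complex \eqref{3cx} supplies orthogonal decompositions $\mathcal{K}_\Gamma = H^1(Y;\ad\Gamma) \oplus K$ and $L^2_{l-1}(\Lambda^2 T^*Y \otimes \SU(2)) = \im d_\Gamma \oplus H^2(Y;\ad\Gamma) \oplus \im d_\Gamma^*$; on $K$ the twisted Laplacian $d_\Gamma^* d_\Gamma$ is invertible, so $d_\Gamma\colon K \to \im d_\Gamma$ is a topological isomorphism.

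Next I would invoke the implicit function theorem on $\Phi(b_1,b_2) := \pi_{\im d_\Gamma} F(b_1+b_2)$, whose partial derivative $\partial_{b_2}\Phi|_{(0,0)} = d_\Gamma|_K$ is an isomorphism. This produces a unique smooth $\Stab(\Gamma)$-equivariant map $\phi\colon V \to K$ with $\phi(0)=0$ and $\Phi(b_1,\phi(b_1)) \equiv 0$. I then set
\[
\mathfrak{p}_\Gamma(b_1) := \Gamma + b_1 + \phi(b_1), \qquad \mathfrak{o}_\Gamma(b_1) := \pi_{H^2} F(b_1 + \phi(b_1)),
\]
with $\Stab(\Gamma)$-equivariance inherited from the naturality of the Hodge projections. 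Differentiating the defining identity at $0$ and using $d_\Gamma b_1 = 0$ for $b_1 \in H^1(Y;\ad\Gamma)$ yields $d\phi|_0 = 0$, so $d\mathfrak{p}_\Gamma|_0$ is the inclusion, which is (i).

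For (ii), one inclusion is automatic: a flat connection $\Gamma+b \in U$ has $\pi_K b = \phi(\pi_{H^1} b)$ by uniqueness in the implicit function theorem, and its harmonic curvature component vanishes. The converse requires showing that $\mathfrak{o}_\Gamma(b_1) = 0$ forces the full curvature $F(b_1+\phi(b_1))$ to vanish, i.e.\ that its $\im d_\Gamma^*$-component is zero too. Writing this component as $d_\Gamma^*\alpha$ with $\alpha$ orthogonal to $\ker d_\Gamma^*|_{\Omega^3}$, the Bianchi identity $d_A F_A = 0$ at $A = \Gamma+b$ becomes $d_\Gamma d_\Gamma^*\alpha = -[b, d_\Gamma^*\alpha]$; pairing with $\alpha$ in $L^2$ and applying the elliptic estimate $\|\alpha\| \leq C\|d_\Gamma^*\alpha\|$ produces $\|d_\Gamma^*\alpha\|^2 \leq C'\|b\|_{L^\infty}\|d_\Gamma^*\alpha\|^2$, forcing $F=0$ once $U$ is shrunk. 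Continuity of $\phi$ then promotes $\mathfrak{p}_\Gamma|_{\mathfrak{o}_\Gamma^{-1}(0)}$ to a homeomorphism onto the flat locus in $U$.

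The main obstacle is the Bianchi-identity argument in (ii): the Sobolev multiplication estimate and the elliptic inequality on $\Omega^3(Y;\ad\Gamma)$ must be combined carefully, and $V, U$ chosen small enough to absorb the nonlinear commutator term. The remainder is a routine implicit function theorem argument in Hilbert space, with the $\Stab(\Gamma)$-equivariance flowing automatically from the canonical nature of the Hodge projections.
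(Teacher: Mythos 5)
Your proposal is correct and follows essentially the same route the paper indicates: the theorem is quoted from \cite[Theorem 12.1.1]{MMR} without proof, and the sketch in Remark \ref{RKur} is exactly your implicit-function-theorem argument on the slice, with your $\phi$ being the paper's $\mathfrak{q}_\Gamma$ (note $K=\im(d_\Gamma^*\colon\Omega^2\to\Omega^1)$, so the targets agree) and your projections matching $\Pi'_\Gamma$ and $\Pi_\Gamma$. The Bianchi-identity step you flag for part (ii) is the standard way to close the argument and is carried out correctly, including the choice of $\alpha$ orthogonal to the harmonic $3$-forms and the smallness of $\|b\|_{L^\infty}$ needed to absorb the commutator.
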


\begin{rem}\label{RKur}
Roughly speaking, the proof of \autoref{Kur} makes use of the implicit function theorem to obtain a map 
\begin{equation}
\mathfrak{q}_{\Gamma}: V \to \im d^*_{\Gamma} \subset L^2_1(T^*Y \otimes \SU(2)),
\end{equation}
which is characterized by the fact that 
\[
\Pi'_{\Gamma} F_{\Gamma+b+\mathfrak{q}_{\Gamma}(b)}=0,
\]
where $\Pi'_{\Gamma}: L^2_{l-1}(\Lambda^2T^*Y \otimes \SU(2)) \to \im d_{\Gamma}$ is the $L^2$-orthogonal projection. Then 
\[
\mathfrak{p}_{\Gamma}(b)=\Gamma+b+\mathfrak{q}_{\Gamma}(b) \text{ and }
\mathfrak{o}_{\Gamma}(b)=\Pi_{\Gamma} F_{\mathfrak{p}_{\Gamma}(b)}, 
\]
where $\Pi_{\Gamma}: L^2_{l-1}(\Lambda^2T^*Y \otimes \SU(2)) \to \ker d_{\Gamma}$ is the $L^2$-orthogonal projection. The zero set of map $\mathfrak{o}_{\Gamma}$ provides a local structure of the character variety $\chi(Y)$ near $[\Gamma]$. 
\end{rem}

\begin{dfn}\label{smooth}
For a flat connection $\Gamma$ on $Y$, the map $\mathfrak{o}_{\Gamma}$ in \autoref{Kur} is called the Kuranishi obstruction map. $[\Gamma] \in \chi(Y)$ is said to be a smooth point if the Kuranishi map of $\Gamma$ vanishes on $V_{\Gamma}$, i.e. $\mathfrak{o}_{\Gamma}\equiv 0$, otherwise a singular point. 
\end{dfn}

\begin{prop}[{\cite[Corollary 9.3.1]{MMR}}]\label{TIS}
Let $[\Gamma] \in \chi(Y)$ be a smooth point. Then there exists a center manifold $\mathcal{H}_{\Gamma}$ consisting of flat connections. Moreover, the asymptotic map 
\begin{equation}
\tilde{\partial}_+: \tilde{M}_{\sigma}(Z, V_{\Gamma}) \to  \mathcal{R}(Y)
\end{equation}
is $C^2$ so that given a submanifold $\mathfrak{S} \subset \mathcal{R}(Y)$, $\tilde{\partial}_+$ is transverse to $\mathfrak{S}$ with respect to a generic perturbation $\sigma$. 
\end{prop}

\begin{proof}
When $[\Gamma] \in \chi(Y)$ is a smooth point, one can take a center manifold $\mathcal{H}_{\Gamma}$ to be the graph of the map $\mathfrak{q}_{\Gamma}$. Indeed in this case $\mathcal{H}_{\Gamma}$ consists of flat connections which are the critical points of $\cs_{\Gamma}$ near $\Gamma$, and are preserved by the gradient flow of $\cs_{\Gamma}$. Since every point on the center manifold $\mathcal{H}_{\Gamma}$ is a critical point, all gradient flowlines on $\mathcal{H}_{\Gamma}$ are constant. Thus the map $\tilde{Q}_{\Gamma}$ coincides with the asymptotic map $\partial^0_+$. Then result now follows from \autoref{TSC}. 
\end{proof}

Now we restrict our attention to the case when $Y=T^3$. For each singular point in $\chi(T^3)$, Gompf--Mrowka \cite{GM} have constructed a center manifold. We recall their construction below and use it to prove that there are no irreducible ASD connections asymptotic to those singular points.  

The character variety $\chi(T^3)$ is identified as a copy of the quotient $T^3/ \sim$, where $\sim$ is given by the hypoelliptic involution consisting of $8$ fixed points corresponding to central connections . When $[\Gamma] \in \chi(Y)$ is a non-central connection, the first homology $H^1(T^3; \ad \Gamma)$ is computed as 
\begin{equation}
H^1(T^3; \ad \Gamma) \cong \mathcal{H}^1(T^3) \otimes  H^0(T^3; \ad \Gamma), 
\end{equation}
where $\mathcal{H}^1(T^3)$ is the space of harmonic $1$-forms on $T^3$, $ H^0(T^3; \ad \Gamma) \cong i\R$ is the Lie algebra of the stabilizer of $\Gamma$. Thus each $b \in H^1(T^3; \ad \Gamma)$ gives a flat connection $\Gamma+b$ due to $b \wedge b =0$. This implies that $\mathfrak{q}_{\Gamma}(b)=0$, and then the Kuranishi map $\mathfrak{o}_{\Gamma}(b) =0$ with $b$ in a small neighborhood $V_{\Gamma}$ of $0$. We conclude that $\Gamma$ is a smooth point in the sense of \autoref{smooth}. When $[\Gamma] \in \chi(Y)$ is a central connection, we have 
\begin{equation}
H^1(T^3; \ad \Gamma) \cong \mathcal{H}^1(T^3) \otimes \SU(2). 
\end{equation}
Now we fix an orthonormal frame $\{ e^1, e^2, e^3\}$ of $\mathcal{H}^1(T^3)$ with respect to the product metric. Then each $b \in H^1(T^3; \ad \Gamma)$ has the form 
\[
b=\sum_i e^i \otimes X_i, \; X_i \in \SU(2). 
\]
Thus the curvature of $\Gamma+b$ has the form
\begin{equation}
F_{\Gamma+b}=\frac{1}{2} \sum_{i, j} e^i \wedge e^j \otimes [X_i, X_j]. 
\end{equation}
In particular $F_{\Gamma+b} \in \mathcal{H}^2(T^3)$. From \autoref{RKur} we conclude that
$\mathfrak{q}_{\Gamma}(b)=0$. Thus the Kuranishi map is 
\[
\mathfrak{o}_{\Gamma}(b)=\frac{1}{2} \sum_{i, j} e^i \wedge e^j \otimes [X_i, X_j].
\]

\begin{prop}[{\cite[Proposition 15.2]{GM}}]\label{STAB}
Let $\Gamma$ be a smooth central flat connection on $E' \to T^3$. Then $H^1(T^3; \ad \Gamma)$ is a center manifold of $\Gamma$. Moreover the stable manifold $W^0_{\Gamma}$ of the origin is given by 
\[
\left\{\sum_i e^i \otimes X_i \in \mathcal{H}^1(T^3) \otimes \SU(2): \|X_i\|=\|X_j\|, \langle X_i, X_j \rangle=0, \langle X_1, [X_2, X_3] \rangle \leq 0 \right\}.
\]
\end{prop}

\begin{cor}\label{asms}
Let $\Gamma$ be a central flat connection on the trivial $SU(2)$-bundle $E'$ over $T^3$. Then for a generic perturbation $\sigma$, one has 
\[
\partial_+^{-1}([\Gamma]) \cap \M^*_{\sigma}(Z) =\varnothing,
\]
where $Z=M \cup [0, \infty) \times T^3$ is a homology $D^2 \times T^2$. 
\end{cor}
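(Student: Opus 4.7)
The plan is to combine Gompf--Mrowka's description of the center manifold at a central flat connection (Proposition \ref{STAB}) with the dimension formula and transversality of Proposition \ref{TSC}. Via the commutative diagram (\ref{ic}), an instanton $[A] \in \M^*_\sigma(Z)$ satisfies $\partial_+([A]) = [\Gamma]$ exactly when its associated trajectory on the center manifold $\mathcal{H}_\Gamma$ converges to the origin, so it suffices to show that for generic $\sigma$ the unbased preimage $P_\Gamma^{-1}(W^0_\Gamma/\Stab(\Gamma))$ is empty.

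For the dimension count: since $Z$ has the integral homology of $D^2 \times T^2$, one has $\chi(Z) = \sigma(Z) = 0$; for central $\Gamma$ on $T^3$ the stabilizer is $SU(2)$, giving $h^0_\Gamma = 3$ and $h^1_\Gamma = 9$; and because the adjoint action of the center is trivial, the adjoint flat bundle is trivial and the APS $\rho$-invariant vanishes. Proposition \ref{TSC} therefore gives $\dim \tilde{\M}^*_\sigma(Z, \mathcal{T}_\Gamma) = 6$, and the based-to-unbased projection $\tilde{\M}^*_\sigma(Z, \mathcal{T}_\Gamma) \to \M^*_\sigma(Z, \mathcal{T}_\Gamma)$ has four-dimensional fiber $E_{z_0}/\{\pm I\} = \C^2/\{\pm I\}$ on the irreducible stratum, so $\dim \M^*_\sigma(Z, \mathcal{T}_\Gamma) = 2$. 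By Proposition \ref{STAB}, $W^0_\Gamma$ is contained in the smooth $4$-dimensional algebraic subvariety of $\mathcal{H}_\Gamma \cong \R^9$ cut out by the equalities $\|X_1\| = \|X_2\| = \|X_3\|$ and $\langle X_i, X_j \rangle = 0$, and its orbit space $W^0_\Gamma/\Stab(\Gamma)$ in the $6$-dimensional unbased target $\mathcal{H}_\Gamma/\Stab(\Gamma)$ is $1$-dimensional on the smooth stratum (parametrized by the common norm $r$ of the vectors $X_i$) and collapses to a single point at the image of the origin.

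By the transversality of Proposition \ref{TSC} (descended to the unbased setting via the standard compact-group quotient by $\Stab(\Gamma)$), for generic $\sigma$ the map $P_\Gamma$ is transverse to each smooth stratum of $W^0_\Gamma/\Stab(\Gamma)$. The expected preimage dimensions are $2 - 5 = -3$ on the smooth stratum and $2 - 6 = -4$ at the origin; both being negative, the preimages are empty. This gives $P_\Gamma^{-1}(W^0_\Gamma/\Stab(\Gamma)) = \varnothing$ and hence $\partial_+^{-1}([\Gamma]) \cap \M^*_\sigma(Z) = \varnothing$.

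The main obstacle is the stratified geometry: $W^0_\Gamma$ is not itself a smooth submanifold because of the inequality $\langle X_1, [X_2, X_3] \rangle \leq 0$ and the conical singularity at the origin, so one first enlarges it to the smooth variety cut out by the equalities alone and then stratifies to remove the origin before Proposition \ref{TSC} can be applied on each smooth piece. A secondary verification is the descent from based to unbased transversality, which follows from the compactness of $\Stab(\Gamma) = SU(2)$ and the freeness of its action on the irreducible locus.
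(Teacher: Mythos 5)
Your proposal follows the same route as the paper: both proofs combine the dimension formula (\ref{dimm}), the Gompf--Mrowka description (\ref{dims}) of the stable set $W^0_\Gamma$, and the transversality of Proposition \ref{TSC}, finishing with a dimension count. The one organizational difference is where the residual symmetry is divided out: the paper stays in the based thickened moduli space $\tilde{\M}^*_\sigma(Z, \mathcal{T}_\Gamma)$ throughout, gets a $2$-dimensional $C^2$-preimage $\tilde{P}_\Gamma^{-1}(W^0_\Gamma \times_{\Stab\Gamma} E'_{y_0})$, and only at the end quotients by the free $SO(3)$-action to conclude emptiness; you instead pass to the unbased $\M^*_\sigma(Z, \mathcal{T}_\Gamma)$ first (correctly getting it $2$-dimensional) and do the count against the $\Stab(\Gamma)$-quotient of $W^0_\Gamma$. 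The based approach has the advantage that every object in sight is an honest manifold, whereas your version needs the stratified/orbifold caveats you flag in the last paragraph (the $\Stab(\Gamma)$-action on $\mathcal{H}_\Gamma$ and on $W^0_\Gamma$ is not free, so those quotients are not manifolds). Your explicit evaluation of (\ref{dimm}) ($\chi = \sigma = 0$, $h^0_\Gamma = 3$, $h^1_\Gamma = 9$, $\rho(\Gamma) = 0$ via triviality of the adjoint bundle — the paper uses the orientation-reversing diffeomorphism of $T^3$ for the $\rho$-vanishing, an equivalent observation) is a useful addition of detail the paper leaves implicit. One small arithmetic note: the five equalities in (\ref{dims}) cut $W^0_\Gamma$ down to dimension $4$ (codimension $5$) in $\mathcal{H}_\Gamma \cong \R^9$; the paper's proof asserts codimension $4$, but as both estimates make the expected preimage dimension negative the conclusion is unaffected either way.
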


\begin{proof}
From the commutative diagram (\ref{ic}) and \autoref{TSC} we know that the map
\[
\tilde{P}_{\Gamma}: \tilde{\M}_{\sigma}^*(Z, \mathcal{T}_{\Gamma}) \to \mathcal{H}_{\Gamma} \times_{\Stab{\Gamma}} E'_{y_0}
\]
is transverse to the stable set $W^0_{\Gamma} \times_{\Stab{\Gamma}} E'_{y_0}$ for a generic perturbation $\sigma$. Moreover $\dim \tilde{\M}^*_{\sigma}(Z, \mathcal{T}_{\Gamma}) = 6$ from  (\ref{dimm}), and the stratified space $W^0_{\Gamma}$ has codimension $4$ in $\mathcal{H}_{\Gamma}$ from \autoref{STAB}. Thus $\tilde{\partial}_{+}^{-1}([\Gamma]) \cap \tilde{\M}^*_{\sigma}(Z)$ lies in a $2$-dimensional $C^2$-manifold $\tilde{P}^{-1}_{\Gamma}(W^0_{\Gamma} \times_{\Stab{\Gamma}} E'_{y_0})$. Since $\partial_+^{-1}([\Gamma]) \cap \M^*_{\sigma}(Z)$ is the quotient of the free smooth $SO(3)$-action on $\tilde{\partial}_{+}^{-1}([\Gamma]) \cap \tilde{\M}^*_{\sigma}(Z)$, we conclude that it has to be empty due to dimension counting. 
\end{proof}

So far we have a complete description of the irreducible moduli space $\M^*_{\sigma}(Z)$ for a generic perturbation $\sigma$:
\begin{enumerate}
\item[\upshape (i)] $\M^*_{\sigma}(Z)$ is a smooth oriented $1$-manifold.
\item[\upshape (ii)] $\M^*_{\sigma}(Z)$ misses all central connections in $\chi(T^3)$ under the asymptotic map. 
\item[\upshape (iii)] $\partial_+: \M^*_{\sigma}(Z) \to \chi(T^3)$ is transverse to a any sub-complex in $\chi(T^3)$ (the set of perturbations depends on the choice of the sub-complex). 
\end{enumerate}

\section{The Reducible Locus}\label{trl}

In this section, we study the structure of the reducible locus $\M^{\Red}_{\sigma}(Z)$ with respect to a generic perturbation. We continue to assume that $Y=T^3$ and $H_*(Z;\Z) \cong H_*(D^2 \times T^2; \Z)$. The first part is to give a global description of $\M^{\Red}_{\sigma}(Z)$. The second part is to give a local description of the moduli space near points that are approached by a sequence of irreducible instantons.  

Before diving into the details, we would like to comment on the choice of the weight $\delta_{\Gamma}$. There are two things we need to take care of. The first is to ensure the moduli space lies in the thickened moduli space. This is achieved by choosing $\delta_{\Gamma} \in (0, \mu_{\Gamma}/2)$, where $\mu_{\Gamma}$ is the smallest nonzero absolute value of eigenvalues of the restricted Hessian $*d_{\Gamma}|_{\ker d^*_{\Gamma}}$. The second is to ensure the deformation complex at an instanton $[A] \in \M_{\sigma}(Z)$ is Fredholm. Later we will see It is equivalent to the Fredholmness of the following complex:
\begin{equation}\tag{$F_{\delta_{\Gamma}, \sigma}$}\label{ssu2tc}
L^2_{k+1, \delta_{\Gamma}}(Z, \SU(2))  \xrightarrow{-d_A} L^2_{k, \delta_{\Gamma}}(Z, \Lambda^1 \otimes \SU(2))  \xrightarrow{d^+_{A, \sigma}} L^2_{k-1, \delta_{\Gamma}}(Z, \Lambda^+ \otimes \SU(2)),
\end{equation}
where $d^+_{A, \sigma}=d^+_A - D \sigma|_A$. We may ignore the perturbation part, since the Freholmness is preserved under small or compact perturbations. According to \cite[Lemma 8.3.1]{MMR} the complex ($F_{\delta_{\Gamma}}$) is Fredholm if and only if ${\delta_{\Gamma} \over 2}$ is not an eigenvalue of $-*d_{\Gamma}|_{\im d^*_{\Gamma}|_{\Omega^2}}$. Since $\im d^*_{\Gamma}|_{\Omega^2} \subset \ker d^*_{\Gamma}|_{\Omega^1}$, we may simply consider the smallest absolute value of eigenvalues of $*d_{\Gamma}|_{\ker d^*_{\Gamma}}$. Note that when $\Gamma$ is not a central connection, we have $H^1(T^3, \ad \Gamma) =3$ (cf. \cite[Lemma 14.2]{GM}). Thus only when approaching the central connections can the smallest absolute value of eigenvalues approach $0$. Let us fix a neighborhood $\mathcal{O}_{c}$ of the central connections in $\chi(T^3)$. Then we may choose a uniform weight $\delta > 0$ for all instantons $[A]$ with $\partial_+([A]) \in \chi(T^3) \backslash \mathcal{O}_c$. 

\subsection{The Global Picture}

Recall that an $SU(2)$-connection $A$ on $E$ is reducible if $A$ preserves a splitting $E=L \oplus L^*$ for some line bundle $L$. We write $A=A_L \oplus A^*_L$ corresponding to the splitting of $E$. The holonomy perturbation decomposes correspondingly as 
\[
\sigma(A):= 
\begin{pmatrix}
\sigma_L(A_L) & 0 \\
0 & \sigma_{L}(A_L^*) 
\end{pmatrix}
\in \Omega^+(Z, \SU(2)).
\]
Note that $\sigma_L(A_L)=-\sigma_L(A^*_L)$. Thus the perturbed ASD equation $F^+_A=\sigma(A)$ is equivalent to
\begin{equation}\label{e7.1}
F^+_{A_L}= \sigma_L(A_L),
\end{equation}
where $\sigma_L(A_L) \in \Omega^+(Z, i\R)$. 

\begin{lem}\label{rrl}
Let $[A] \in \M^{\Red}_{\sigma}(Z)$ be a reducible $\sigma$-perturbed instanton. Then one can choose a representative $A$ which preserves the splitting $E=\underline{\C} \oplus \underline{\C}$.
\end{lem}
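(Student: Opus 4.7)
The plan is to produce an $SU(2)$-gauge transformation carrying $A$ into a connection respecting the standard splitting $E = \underline{\C} \oplus \underline{\C}$. By reducibility, $A$ preserves some orthogonal splitting $E = L \oplus L^*$ into a Hermitian line bundle and its conjugate dual, arising from a covariantly constant section of $\mathfrak{su}(E)$ generating the $U(1) \subset \Stab(A)$. It suffices to show that $L$ is topologically trivial: any smooth unitary trivialization $\phi\colon L \xrightarrow{\sim} \underline{\C}$ then assembles with its conjugate dual into a unitary isomorphism $L \oplus L^* \cong \underline{\C} \oplus \underline{\C}$ preserving the induced trivialization of the determinant, i.e., an element of $\G_{k+1, loc}$, and applying this gauge transformation to $A$ yields the desired representative.

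To prove triviality of $L$, I will first show it is trivial when restricted to the cylindrical end and then bootstrap to all of $Z$. For the first step, Theorem \ref{eam} guarantees that $A$ admits an asymptotic limit $\Gamma = \partial_+([A])$, which is a flat $SU(2)$-connection on $E' \to T^3$. The covariantly constant section of $\mathfrak{su}(E)$ giving the reduction has constant pointwise norm and passes to a nonzero covariantly constant section of $\mathfrak{su}(E')$ in the limit, producing a splitting $E' = L_\infty \oplus L_\infty^*$ matching the restriction of $L$ to large slices of the end. Since $L_\infty$ supports a flat $U(1)$-connection, $c_1(L_\infty) = 0$ in de~Rham cohomology; because $H^2(T^3;\Z) \cong \Z^3$ is torsion-free, $L_\infty$ is topologically trivial. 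As the cylinder $[0, \infty) \times T^3$ retracts to $T^3$, the restriction $L|_{[0,\infty)\times T^3}$ is the pullback of $L_\infty$, hence also trivial.

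For the bootstrap, I will verify that the restriction map $H^2(Z;\Z) \to H^2([0,\infty)\times T^3; \Z) \cong H^2(T^3; \Z)$ is injective, from which $c_1(L) = 0 \in H^2(Z;\Z) \cong \Z$ and $L$ is trivial. Using excision and Lefschetz duality on the compact core $M$, the kernel of this restriction is dual to the map $H_2(M) \to H_2(M, \partial M)$; under the assumption $H_*(Z;\Z) \cong H_*(D^2 \times T^2; \Z)$, $H_2(M) \cong \Z$ is generated by the core torus, and this torus is isotopic within $M$ to a torus lying in $\partial M = T^3$, hence becomes null in $H_2(M, \partial M)$. The main obstacle is ensuring that the reduction of $A$ genuinely survives to the asymptotic limit so that $\Gamma$ acquires a compatible flat $U(1)$-reduction on a line bundle with the correct isomorphism class; this follows from continuity of the asymptotic map together with the norm-preservation of covariantly constant sections of the adjoint bundle.
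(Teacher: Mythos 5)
Your argument is correct in outline and rests on the same mechanism as the paper's proof: $c_1(L)$ is detected entirely by the end of $Z$, and there it is annihilated by the decay of the curvature. (The paper pairs $c_1(L)$ against a generator of $H_2(Z;\Z)\cong\Z$ represented by a torus $T_s\subset\{s\}\times T^3$ and lets $s\to\infty$; you dually restrict $c_1(L)$ to the end.) Your detour through the asymptotic limit $\Gamma$ and the survival of the reduction to the limit is avoidable: for any $2$-cycle $C\subset T^3$ the integer $\tfrac{i}{2\pi}\int_C F_{B_L(t)}$ computes $\langle c_1(L|_{\{t\}\times T^3}), [C]\rangle$, is independent of $t$, and tends to $0$ because $\|F_{B(t)}\|_{C^0}\to 0$ by the elliptic estimates for finite-energy instantons; this already gives triviality of $L$ on the end without discussing whether the splitting converges.

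The one step that does not hold up as written is your justification of the injectivity of $H^2(Z;\Z)\to H^2(T^3;\Z)$. For an arbitrary $Z$ with the integral homology of $D^2\times T^2$ there is no ``core torus,'' and no isotopy pushing an embedded generator of $H_2(M;\Z)$ into $\partial M$ is available; that is a geometric claim not implied by the homological hypotheses. What you actually need, namely that $j_*\colon H_2(M;\Z)\to H_2(M,\partial M;\Z)$ vanishes, is nevertheless true and has a short homological proof: both groups are infinite cyclic (Lefschetz duality and universal coefficients), so $j_*$ is multiplication by some integer $d$; by exactness $\cok(j_*)\cong\Z/d$ injects into $H_1(T^3;\Z)=\Z^3$, forcing $d=0$ or $|d|=1$; if $|d|=1$ then $\partial\colon H_2(M,\partial M)\to H_1(\partial M)$ vanishes and $H_1(T^3;\Z)=\Z^3$ would inject into $H_1(M;\Z)=\Z^2$, a contradiction; hence $d=0$. (The dual form of this fact, that the generator of $H_2(M;\Z)$ comes from $H_2(T^3;\Z)$, is exactly what underlies the paper's choice of the representative $T_s$.) With that repair, and the routine observation that the trivialization $\phi\oplus(\phi^*)^{-1}$ has determinant one and the required Sobolev regularity, your proof is complete.
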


\begin{proof}
As in the proof of \autoref{t5.1}, one can choose a representative $A$ such that $A|_{[T_1, \infty) \times T^3}$ is in standard form with respect to a flat connection $\Gamma$ on $E'$. Let $E=L \oplus L^*$ be the decomposition that $A$ preserves. Let us write $A=B+\beta dt$ and  $A_L=B_L+ \beta_L dt$. Then we have 
\[
F_{A_L} = F_{B_L} + dt \wedge (\dot{B}_L - d_{B_L} \beta_L). 
\]
We denote by $T_s \subset \{s\} \times T^3$ the $2$-torus representing a generator $1_Z \in H_2(Z; \Z) \cong \Z$. Then the Chern--Weil formula gives us that 
\[
c_1(L) \cdot  1_Z= {1 \over {2\pi i}} \int_{T_s} F_{A_L} =   {1 \over {2\pi i}} \int_{T_s} F_{B_L(s)} .
\]
Due to the finite energy of $A$, \autoref{l4.11} tells us that for $T \gg 0$ one has 
\[
\|F_{B(t)} \|^2_{C^0} \leq c. (\|F_A \|^2_{L^2([T, \infty) \times Y)} + \|\sigma\|_{\mathcal{P}} e^{-2\mu t}), \text{ when } t \geq T.
\]
Since $\|F_{B_L(s)}\|_{C^0} \leq \|F_{B(s)}\|_{C^0}$, we conclude that 
\[
c_1(L) \cdot 1_Z = \lim_{s \to \infty} {1 \over {2\pi i}} \int_{T_s} F_{B_L(s)}  =0.
\]
Thus $c_1(L)=0 \in H^2(Z; \Z)$ meaning $L$ is equivalent to the trivial bundle $\underline{\C}$. 
\end{proof}

\autoref{rrl} leads us to consider the moduli space of ASD $U(1)$-connections on the trivial bundle $\underline{\C}$. We write $\A_{k, loc}^{U(1)}(Z)$ for the space of $L^2_{k, loc}$ $U(1)$-connections on the trivial line bundle $\underline{\C}$ of $Z$. Fixing the product connection as the reference connection, $\A_{k, loc}^{U(1)}$ is identified with $L^2_{k, loc}(Z, T^*Z \otimes i\R)$. We write $\G^{U(1)}_{k+1,loc}$ for the $L^2_{k+1, loc}$ gauge transformations of the $U(1)$-bundle $\underline{\C}$. Then $\G^{U(1)}_{k+1,loc}$ is given by $L^2_{k+1, loc}(Z, U(1))$.

\begin{dfn}
The moduli space of perturbed anti-self-dual $U(1)$-connections is defined to be
\[
\M^{U(1)}_{\sigma}(Z):= \{ A_L \in \A_{k, loc}^{U(1)}: F^+_{A_L} = \sigma_L(A_L), \int_Z |F_{A_L}|^2 < \infty \} /\G^{U(1)}_{k+1,loc}.
\]
\end{dfn}
If we write $A_L = d + a_L$ with $a_L \in L^2_{k, loc}(Z, T^*Z \otimes i\R)$, the induced dual connection has the form $A^*_L= d-a_L$. Thus $F_{A_L} = -F_{A^*_L}$. Combining with the fact that $\sigma_L(A_L) = -\sigma_L(A^*_L)$, we get an involution $\tau$ on $\M^{U(1)}_{\sigma}(Z)$ given by $\tau([A]) = [A^*]$. 

\begin{lem}\label{pci}
The quotient of $\M^{U(1)}_{\sigma}(Z)$ under $\tau$ is the reducible locus $\M^{\Red}_{\sigma}(Z)$. Moreover the set of fixed points of $\tau$ consists of classes of flat connections whose holonomy groups lie in $\{-1, 1\} \subset U(1)$. 
\end{lem}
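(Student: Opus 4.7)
The plan is to build the quotient map $\M^{U(1)}_{\sigma}(Z)/\tau \to \M^{\Red}_{\sigma}(Z)$ explicitly, prove it is a bijection, and then analyze the fixed set of $\tau$ using holonomy.

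First I would construct a map
\[
\Phi: \M^{U(1)}_{\sigma}(Z) \longrightarrow \M^{\Red}_{\sigma}(Z), \quad [A_L] \longmapsto [A_L \oplus A_L^*]
\]
and check it is well defined. If $u \in \G^{U(1)}$ satisfies $u \cdot A_L = A_L'$, then $\mathrm{diag}(u, u^{-1}) \in \G_{k+1, loc}$ lies in the $SU(2)$ gauge group and carries $A_L \oplus A_L^*$ to $A_L' \oplus (A_L')^*$. Since $\sigma_L(A_L^*) = -\sigma_L(A_L)$, the perturbed ASD equation for the $SU(2)$-connection reduces to $F^+_{A_L} = \sigma_L(A_L)$, so the image lies in $\M^{\Red}_{\sigma}(Z)$. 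Moreover, the constant gauge transformation $w = \begin{pmatrix} 0 & 1 \\ -1 & 0 \end{pmatrix} \in SU(2)$ conjugates $A_L \oplus A_L^*$ to $A_L^* \oplus A_L = A_L^* \oplus (A_L^*)^*$, so $\Phi$ descends to $\bar{\Phi}: \M^{U(1)}_{\sigma}(Z)/\tau \to \M^{\Red}_{\sigma}(Z)$.

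Next I would prove bijectivity. Surjectivity is immediate from Lemma \ref{rrl}: any reducible class admits a representative preserving the trivial splitting $E = \underline{\C} \oplus \underline{\C}$. For injectivity, suppose $v \in \G_{k+1, loc}$ satisfies $v \cdot (A_L \oplus A_L^*) = A_L' \oplus (A_L')^*$. Both source and target connections are preserved by the diagonal $U(1)\subset SU(2)$, so $v$ conjugates one diagonal $U(1)$ to the other, i.e. $v$ lies in the normalizer of the diagonal $U(1)$ in $SU(2)$, which has two components: the diagonal $U(1)$ itself and the anti-diagonal coset containing $w$. Depending on the component, one reads off either $[A_L] = [A_L']$ or $[A_L] = [(A_L')^*]$ in $\M^{U(1)}_{\sigma}(Z)$. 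The degenerate case $A_L \cong A_L^*$ is handled by the fixed-point analysis below and does not cause ambiguity because in that case the two options coincide.

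For the second statement, I would analyze $[A_L^*] = [A_L]$. Such an equality produces a $U(1)$-gauge transformation $u: Z \to U(1)$ with $A_L - u^{-1}du = -A_L$, hence $2 A_L = u^{-1}du$. The right-hand side is closed, so $F_{A_L} = dA_L = 0$, showing that $A_L$ is flat. Because the $U(1)$-gauge group is abelian, the holonomy representation $\rho_{A_L} \in \Hom(\pi_1(Z), U(1))$ is a gauge invariant; applying this to $[A_L] = [A_L^*]$ yields $\rho_{A_L} = \rho_{A_L^*} = \rho_{A_L}^{-1}$, so $\rho_{A_L}^2 = 1$ and the holonomy lands in $\{\pm 1\}$. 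Conversely, if $A_L$ is flat with holonomy in $\{\pm 1\}$, then $\rho_{A_L^*} = \rho_{A_L}^{-1} = \rho_{A_L}$, and since flat $U(1)$-connections on the trivial bundle over $Z$ are classified up to gauge by their holonomy, $[A_L^*] = [A_L]$.

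The main obstacle is the injectivity step, where one must rule out exotic $SU(2)$-gauge transformations between two reducible connections that neither preserve nor swap the splitting. This is resolved by the normalizer argument together with the observation that both connections have stabilizer containing the diagonal $U(1)$, which forces $v$ into the normalizer after multiplying by an element of the stabilizer.
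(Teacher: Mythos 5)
Your proposal is correct and follows essentially the same route as the paper: surjectivity via Lemma \ref{rrl}, identification of the residual gauge ambiguity with the Weyl group element (your $w$ is the paper's $\eta$), and the fixed-point analysis via $2A_L = u^{-1}du$ forcing flatness and real holonomy. You are somewhat more explicit than the paper on the injectivity step (the normalizer/stabilizer argument) and you also verify the converse inclusion for the fixed-point set, but these are elaborations of the same argument rather than a different approach.
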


\begin{proof}
Note that the map $A_L \mapsto A_L \oplus A^*_L$ descends to a surjective map $\M^{U(1)}_{\sigma}(Z) \to \M^{\Red}_{\sigma}(Z)$ by \autoref{rrl}. We also note the Weyl group of $SU(2)$ is $\Z/2$ generated by the matrix representative
\[
\eta=
\begin{pmatrix}
0 & -1 \\
1 & 0
\end{pmatrix}.
\]
Thus any $SU(2)$ gauge transformation of $E$ preserving the splitting $E= \underline{\C} \oplus \underline{\C}$ is either a $U(1)$ gauge transformation or a $U(1)$ gauge transformation multiplied by $\eta$. The effect of multiplying $\eta$ is applying the involution $\tau$. This identifies $\M^{U(1)}_{\sigma}(Z) /\tau = \M^{\Red}_{\sigma}(Z)$. 

Let $[A_L] \in \M_{\sigma}^{U(1)}$ be a fixed point of $\tau$. Then there exists $u \in \G^{U(1)}$ such that $A^*_L = u \cdot A_L$. Let us write $A_L=d + a_L$. Then this is equivalent to 
\[
-a_L = a_L - u^{-1}du.
\]
Since $u^{-1}du$ is a closed $1$-form, we conclude that $d a_L=0$, which implies that $A_L$ is a flat connection. Note that for any loop $\Gamma$ in $Z$, $\Hol_{\gamma} A^*_L$ equals to the conjugate of $\Hol_{\gamma} A_L$. However, since $A_L$ and $A^*_L$ are gauge equivalent, their holonomy coincides with each other. We conclude that the holonomy of $A_L$ has to be real in $U(1)$, which lies in $\{-1, 1\}$. 
\end{proof}

\begin{dfn}
We say a $U(1)$-connection $A_L$ is a central connection if the holonomy group of $A$ lies in $\{\pm 1\} \subset U(1)$. Otherwise we say $A_L$ is non-central. We denote by $\M^{U(1),*}_{\sigma}(Z)$ the non-central part of the moduli space $\M^{U(1)}_{\sigma}(Z)$. 
\end{dfn}

The analysis of the structure of the $U(1)$-moduli space $\M^{U(1)}_{\sigma}(Z)$ is essentially simpler than that of the $SU(2)$-case due to the commutativity of the group $U(1)$. We first point out that one does not need to consider the thickened moduli space, and the center manifold looks as simple as one would hope. To see this, Let us consider the deformation complex of the space of flat $U(1)$ connections over $T^3$ at $\Gamma_L$:
\begin{equation}\label{3cx1}
L^2_l(T^3, i\R) \xrightarrow{-d} L^2_{l-1}(T^3, \Lambda^1 \otimes i\R) \xrightarrow{*d} L^2_{l-2}(T^3, \Lambda^1 \otimes i\R).
\end{equation}
The $U(1)$-version Chern--Simons functional is
\[
\cs^{U(1)}(B_L) = -\frac{1}{2} \int_{T^3} b_L \wedge db_L,
\]
where $B_L= \nabla_L + b_L$, $\nabla_L$ is the product connection, and $b_L \in L^2_l(T^3, \Lambda^1 \otimes i\R)$. The gradient of $\cs^{U(1)}$ is given by 
\[
\grad \cs^{U(1)}|_{B_L} = *d b_L. 
\]
We denote by $H^1_{\Gamma_L}:=\ker d \cap \ker d^* \subset L^2_l(T^3, \Lambda^1 \otimes i\R)$.   We claim that $\Gamma_L + H^1_{\Gamma_L}$ is the center manifold for the pair $(H^1_{\Gamma_L}, -\grad \cs^{U(1)})$, i.e. the center manifold is the graph of the zero map. Indeed, $\Gamma_L + H^1_{\Gamma_L}$ consists of all critical points of $\grad \cs^{U(1)}$ in $\ker d^*$ and is preserved by the gradient flowlines. Moreover, the gradient vector field $\grad \cs^{U(1)}$ is already complete over $H^1_{\Gamma_L}$. So there is no need to consider the thickened moduli space. The argument in \autoref{EPA} implies that all connections in $\M^{U(1)}_{\sigma}(Z)$ have exponential decay to their asymptotic value. We denote by $\mathcal{R}^{U(1)}(T^3):=\Hom(\pi_1(T^3), U(1))$ the space of $U(1)$-representations of $\pi_1(T^3)$, and write the asymptotic map as 
\[
\partial_+: \M^{U(1)}_{\sigma}(Z) \longrightarrow \mathcal{R}^{U(1)}(T^3). 
\]
Since the center manifold is $C^{\infty}$, the asymptotic map is $C^{\infty}$ as well. If we only consider the non-central stratum $\M^{U(1), *}_{\sigma}(Z)$ where the holonomy perturbations are nonzero, the argument of \autoref{TSC} implies that 
\[
\partial_+: \M^{U(1), *}_{\sigma}(Z) \longrightarrow \mathcal{R}^{U(1)}(T^3)
\]
is transverse to any given submanifold with respect to generic perturbations. The computation of the dimension is given by considering the deformation complex at $[A_L] \in \M^{U(1),*}_{\sigma}(Z)$ as in \cite[Chapter 8]{MMR}:
\[
d=-\big(\chi(Z) + \sigma(Z) \big) + {h^1 + h^0 \over 2} + {\rho(\Gamma_L) \over 2},
\]
where $\Gamma_L$ is the asymptotic value of $A_L$, $h^i$ is the dimension of the $i$-the homology of the deformation complex (\ref{3cx1}) at $\Gamma_L$ as above, and $\rho(\Gamma_L)$ is the $\rho$-invariant of the odd signature operator twisted by $\Gamma_L$.  It was mentioned in \cite[Section 16]{GM} that $\rho(\Gamma) = 0$. Since $h^1=3, h^0=1$, we conclude that $d=2$. Combining with the transversality result, we see that the image $\partial_+(\M_{\sigma}^{U(1),*}(Z))$ misses the points in $\mathcal{R}^{U(1)}(T^3)$ whose holonomy groups lie in $\{ \pm 1\}$. We summarize the above discussion as follows. 

\begin{prop}
Let $Z$ be a Riemannian smooth manifold with a cylindrical end modeled on $[0, \infty) \times T^3$ satisfying $H_*(Z; \Z) \cong H_*(D^2 \times T^2; \Z)$. Then given a sub-manifold $\mathfrak{S} \subset \mathcal{R}^{U(1)}(T^3)$, with respect to a generic perturbation $\sigma \in \mathcal{P}_{\mu}$ we have the following description for the non-central moduli space $\M^{U(1),*}_{\sigma}(Z)$.
\begin{enumerate}
\item[\upshape (i)] $\M^{U(1),*}_{\sigma}(Z)$ is an oriented smooth $2$-manifold. 
\item[\upshape (ii)] The asymptotic map $\partial_+: \M^{U(1),*}_{\sigma}(Z) \to \mathcal{R}^{U(1)}(T^3)$ is smooth and transverse to $\mathfrak{S}$. 
\item[\upshape(iii)] The image $\partial_+(\M_{\sigma}^{U(1),*}(Z))$ misses all connections in $\mathcal{R}^{U(1)}(T^3)$ whose holonomy groups lie in $\{ \pm 1\}$. 
\end{enumerate}
\end{prop}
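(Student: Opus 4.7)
The plan is to run the $U(1)$-analogues of Proposition \ref{TSC} and Proposition \ref{TIS}, taking advantage of the simplifications that come from commutativity. Since the cup product $b\wedge b$ vanishes on $1$-forms valued in the abelian Lie algebra $i\R$, the Kuranishi map at any flat $U(1)$-connection $\Gamma_L$ over $T^3$ is identically zero, so the center manifold at $\Gamma_L$ is the linear slice $\Gamma_L+H^1_{\Gamma_L}$ and coincides locally with a component of $\mathcal{R}^{U(1)}(T^3)$. In particular no thickening is needed: the map $Q_{\Gamma_L}$ itself is smooth and agrees with the asymptotic map.

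For (i) and the dimension count, I would set up the universal map
\[
\mathcal{F}: \mathcal{P}_{\mu} \times \A^{U(1)}_{k,\delta}(Z) \longrightarrow L^2_{k-1,\delta}(\Lambda^+T^*Z\otimes i\R),\qquad (\sigma,A_L)\longmapsto F^+_{A_L}-\sigma_L(A_L),
\]
on a weighted space, with weight $\delta$ chosen below the smallest nonzero absolute eigenvalue of $*d|_{\ker d^*}$ over a neighborhood of the non-central locus in $\mathcal{R}^{U(1)}(T^3)$ (the rationale for excluding the central locus is that the Fredholm weight degenerates there). The linearization in $\pmb{\omega}$ is $\tfrac{1}{2}\sum_\alpha(\Hol_{q_\alpha}A_L-\Hol_{q_\alpha}A_L^*)\otimes\omega_\alpha$ by (\ref{rlp}). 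At a non-central $A_L$ there exists a loop along which $\Hol A_L\notin\{\pm 1\}$, and $C^1$-density of the family $\{q_\alpha(-,x)\}$ then ensures that as $\pmb{\omega}$ varies this term generates a dense subspace of the $L^2_{k-1,\delta}$-cokernel of the Fredholm operator $d^+$. Hence $\mathcal{F}$ is submersive at every non-central zero, and Sard--Smale makes $\M^{U(1),*}_\sigma(Z)$ a smooth manifold for generic $\sigma$. Its dimension is the Atiyah--Patodi--Singer index of the abelian deformation complex (\ref{3cx1}) twisted by $A_L$, which via the formula from \cite[Chapter 8]{MMR} reads
\[
d=-(\chi(Z)+\sigma(Z))+\frac{h^1+h^0}{2}+\frac{\rho(\Gamma_L)}{2}=0+2+0=2,
\]
using $\chi(Z)=\sigma(Z)=0$ (as $H_*(Z;\Z)\cong H_*(D^2\times T^2;\Z)$), $h^0=1$, $h^1=3$ at any $U(1)$ representation of $T^3$, and $\rho(\Gamma_L)=0$ because $T^3$ admits an orientation-reversing self-diffeomorphism. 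Orientability follows from the standard determinant-line-bundle argument for the real Fredholm family over $\M^{U(1),*}_\sigma(Z)$.

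For (ii), smoothness of $\partial_+$ is immediate: the center manifolds are linear, and Proposition \ref{EPA} gives smooth dependence of the asymptotic value on $[A_L]$. Transversality to any submanifold $N\subset\mathcal{R}^{U(1)}(T^3)$ follows by applying Sard--Smale to the universal asymptotic map $\partial'_+:\mathcal{F}^{-1}(0)/\G^{U(1)}_{k+1}\to\mathcal{R}^{U(1)}(T^3)$; the parametric submersion property of $\partial'_+$ is the same density argument used for $\mathcal{F}$, combined with the fact that small variations of $\pmb{\omega}$ induce arbitrary tangent vectors in the target through the exponential decay estimate.

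For (iii), the subset of $\mathcal{R}^{U(1)}(T^3)$ on which holonomy lies in $\{\pm 1\}$ is the finite set of eight representations into $\{\pm 1\}^3$, hence a $0$-dimensional submanifold of the $3$-torus $\mathcal{R}^{U(1)}(T^3)$. Applying (ii) with $N$ equal to this finite set, the transverse preimage in the $2$-manifold $\M^{U(1),*}_\sigma(Z)$ has formal dimension $2+0-3=-1$ and is therefore empty. I expect the only real obstacle to be the first density step for the parametric derivative at non-central $U(1)$-connections and the uniform-weight bookkeeping across the non-central stratum (handled by shrinking the neighborhood $\mathcal{O}_c$ of the central locus discussed above); everything else is a direct abelian specialization of the $SU(2)$ arguments already developed in Sections \ref{EAM} and \ref{tims}.
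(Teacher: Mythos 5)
Your proposal is correct and follows essentially the same route as the paper: the linearity of the abelian center manifold (hence no thickening), the $U(1)$-specialization of the transversality argument from Proposition \ref{TSC} using density of holonomy perturbations at non-central connections, the Atiyah--Patodi--Singer index computation yielding $d=2$, and the dimension count via transversality to the eight representations into $\{\pm 1\}$ for part (iii). The one cosmetic difference is that the paper attributes the orientability to an explicit identification $\det\Ind(E^{U(1)}_{\delta})\cong\Lambda^{\max}H^0(Z;i\R)^*\otimes\Lambda^{\max}H^1(Z;i\R)$ from \cite[Proposition 3.12]{M1} rather than to a generic determinant-line argument, but this is the same content.
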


Note that when there are no perturbations, the $U(1)$-moduli space $\M^{U(1)}(Z)$ is the space of gauge equivalence classes of flat $U(1)$-connections on $Z$. The holonomy map identifies $\M^{U(1)}(Z)$ with the space $\mathcal{R}^{U(1)}(Z)$ consisting of $U(1)$-representations of $\pi_1(Z)$, thus is identified as a $2$-torus. Our next step is to show that this feature is preserved under small generic perturbations. 

Note that all connections in $\M^{U(1)}_{\sigma}(Z)$ has exponential decay after a fixed time $T_0$ given by \autoref{rt0}. Moreover the decay rate is given by ${\mu \over 2}$, where $\mu$ is the smallest absolute value of eigenvalues of $*d|_{\ker d^*}$. We fix a constant $\delta$ satisfying $0< \delta < {\mu \over 2}$ as the weight.  Now we can narrow down the ambient connection space to be 
\[
\begin{split}
\A^{U(1)}_{k, \delta}(Z):=\{A_L \in \A^{U(1)}_{k, loc}(Z): & \exists b_L \in \mathcal{H}^1(T^3; i\R) \text{ such that } \\
& A_L- \varphi b_L \in L^2_{k, \delta}(Z, \Lambda^1 \otimes i\R) \},
\end{split}
\]
where $\varphi: Z \to \R$ is the cut-off function in \autoref{d2.17}. The gauge group that preserves $\A^{U(1)}_{k, \delta}(Z)$ is 
\[
\begin{split}
\G_{k+1, \delta}:=\{ u \in \G_{k+1, loc}(Z):& u|_{[T_0 ,\infty) \times T^3} =u_0 \cdot e^{\xi}, \text{ where } u_0 \in S^1 \\
& \xi \in L^2_{k+1, \delta}([T_0, \infty) \times T^3, i\R) \}.
\end{split}
\]
Let $[A_L] \in \M^{U(1)}_{\sigma}(Z)$. The Lie algebra of the gauge group $\G^{U(1)}_{k+1, \delta}(Z)$ is 
\begin{equation}\label{e7.3.1}
\hat{L}^2_{k+1, \delta}(Z, i\R):=\{ \xi \in L^2_{k+1, loc}(Z, i\R): d\xi \in L^2_{k, \delta}(Z, i\R) \}.
\end{equation}\label{e7.3.2}
The tangent space $T_{A_L} \A^{U(1)}_{k, \delta}(Z)$ is 
\begin{equation}
\hat{L}^2_{k, \delta}(Z, \Lambda^1 \otimes i\R):= \{ a_L + \varphi b_L : a_L \in L^2_{k, \delta}(Z, \Lambda^1 \otimes i \R), b_L \in \mathcal{H}^1(T^3; i\R) \}.
\end{equation}
Finally the deformation complex at $A_L$ is 
\begin{equation}\tag{$E^{U(1)}_{\delta, \sigma}$}\label{u1tc}
\hat{L}^2_{k+1, \delta}(Z, i\R) \xrightarrow{-d} \hat{L}^2_{k, \delta}(Z, \Lambda^1 \otimes i\R) \xrightarrow{d^+_{\sigma}} L^2_{k-1, \delta}(Z, \Lambda^+ \otimes i\R),
\end{equation}
where $d^+_{\sigma}:=d^+ - D \sigma_L|_{A_L}$ is the linearization of the perturbed ASD equation at $A_L$. Sitting inside of the complex $E^{U(1)}_{\delta, \sigma}$ is a sub-complex:
\begin{equation}\tag{$F^{U(1)}_{\delta, \sigma}$}\label{u1sc}
L^2_{k+1, \delta}(Z, i\R) \xrightarrow{-d} L^2_{k, \delta}(Z, \Lambda^1 \otimes i\R) \xrightarrow{d^+_{\sigma}} L^2_{k-1, \delta}(Z, \Lambda^+ \otimes i\R).
\end{equation}
When $\sigma=0$, the quotient of $E^{U(1)}_{\delta, \sigma}$ by $F^{U(1)}_{\delta, \sigma}$ is identified as
\[
i\R \xrightarrow{0} \mathcal{H}^1(T^3; i\R) \to 0.
\]
As mentioned in the beginning of this section, the choice of $\delta$ also ensures that the complex $E^{U(1)}_{\delta, \sigma}$ is Fredholm with respect to small or compact perturbations. Now let $[A_L] \in \M^{U(1)}_{\sigma}(Z)$ be a connection with holonomy in $\{\pm 1\}$. In this case, the holonomy of $A = A_L \oplus A^*_L$ is traceless, from which we know $D \sigma_L|_{A_L}=0$. \cite[Proposition 3.12]{M1} identifies $H^2(E^{U(1)}_{\delta}) \cong \hat{H}^+_c(Z; i\R)$, where $\hat{H}^+_c(Z; i\R)$ is the image of $H^+_c(Z; i\R)$ in $H^2(Z; i\R)$ under the inclusion map. Due to the fact that $b^+(Z) =0$, we conclude that $H^2(E^{U(1)}_{\delta})=0$. From this fact we learn two things:
\begin{enumerate}[label=(\alph*)]
\item Each central class $[A_L]$ is a smooth point in $\M^{U(1)}_{\sigma}(Z)$. 
\item The non-perturbed $U(1)$-moduli space $\M^{U(1)}(Z)$ is regular, i.e. It is smoothly cut-out by the defining equation. 
\end{enumerate}

\begin{prop}\label{TRS}
For a generic small perturbation $\sigma$, $\M^{U(1)}_{\sigma}(Z)$ is diffeomorphic to a $2$-torus. 
\end{prop}

\begin{proof}
The proof is similar to the transversality result as in \autoref{TSC}. Let $\sigma \in \mathcal{P}_\mu$ be a  generic perturbation so that $\M^{U(1)}_{\sigma}(Z)$ is regular. Pick a path $\sigma_t$ from $0$ to $\sigma$ in $\mathcal{P}_{\mu}$. Now we consider the map 
\[
\begin{split}
\mathcal{F}^{U(1)}: \mathcal{P}_{\mu} \times \ker d^*_{\delta} &\longrightarrow L^2_{k-1, \delta}(Z, \Lambda^+ \otimes i\R) \\ 
(\sigma, a_L) & \longmapsto d^+_{\sigma} a_L,
\end{split}
\]
where $d^*_{\delta} = e^{-\delta\tau} d^* e^{\delta \tau}: L^2_{k, \delta}(T^*Z \otimes i\R) \to L^2_{k-1, \delta}(Z, i\R)$ is the formal $L^2_{\delta}$-adjoint of $d$. Our discussion above implies that $\mathcal{F}^{U(1)}$ is a submersion. We denote by $\mathcal{Z}^{U(1)} = (\mathcal{F}^{U(1)})^{-1}(0)$. By construction, $\sigma_0$ and $\sigma_1$ are two regular values of the projection map $\pi: \mathcal{Z}^{U(1)}  \to \mathcal{P}_{\mu}$. We approximate the path $\sigma_{t}$ relative to boundary by a generic path $\sigma'_{t}$ transverse to the map $\pi:  \mathcal{Z}^{U(1)}  \to \mathcal{P}_{\mu}$. Then the union 
\[
\mathcal{Z}^{U(1)}_I:=\bigcup_{t \in [0, 1]} \pi^{-1} (\sigma'_t) \cap \mathcal{Z}^{U(1)}
\]
is a cobordism from $\M^{U(1)}(Z)$ to $\M^{U(1)}_{\sigma_1}(Z)$. Since $\sigma_0=0$ is a regular value of $\pi|_{\mathcal{Z}^{U(1)}}$, we conclude that whenever $\|D \sigma_L\|$ is small, $\sigma$ is a regular value as well. Since $\|D\sigma_L\| \leq c. \|\sigma\|_{\mathcal{P}}$, we can choose choose $\|\sigma\|_{\mathcal{P}}$ sufficiently small so that each point $\sigma'_t$ in the path is a regular value of $\pi|_{\mathcal{Z}^{U(1)}}$. Thus the cobordism $\mathcal{Z}^{U(1)}_I$ is a product. This shows that $\M^{U(1)}_{\sigma_{\pmb{\omega}}}(Z)$ is diffeomorphic to $\M^{U(1)}(Z)$ which is a $2$-torus. 
\end{proof}

\begin{cor}\label{RPI}
Given a generic perturbation $\sigma$ whose norm $\|\sigma\|_{\mathcal{P}}$ is sufficiently small, the reducible locus $\M^{\Red}_{\sigma}(Z)$ is identified as a pillowcase, i.e. the quotient of $T^2$ by the hypoelliptic involution. 
\end{cor}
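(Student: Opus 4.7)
The plan is to combine Lemma \ref{pci}, which identifies $\M^{\Red}_{\sigma}(Z)$ with the quotient $\M^{U(1)}_{\sigma}(Z)/\tau$, together with Proposition \ref{TRS}, which identifies $\M^{U(1)}_{\sigma}(Z)$ with a $2$-torus. What remains is to show that under this identification the involution $\tau$ becomes the hyperelliptic involution on $T^2$, whose quotient is by definition the pillowcase.

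First I would observe that the cobordism $\mathcal{Z}^{U(1)}_I$ constructed in the proof of Proposition \ref{TRS} is automatically $\tau$-equivariant. Indeed, for every $\sigma \in \mathcal{P}_\mu$ the formula \eqref{rlp} gives $\sigma_L(A_L^*)=-\sigma_L(A_L)$, and $F^+_{A_L^*}=-F^+_{A_L}$, so the relation $F^+_{A_L}-\sigma_L(A_L)=0$ is preserved by $A_L\mapsto A_L^*$. Hence $\tau$ acts on the zero set $\mathcal{Z}^{U(1)}$ of $\mathcal{F}^{U(1)}$, on each fiber of the projection $\pi$, and on the whole cobordism $\mathcal{Z}^{U(1)}_I$. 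Consequently the diffeomorphism $\M^{U(1)}_{\sigma}(Z)\cong\M^{U(1)}(Z)$ provided by the cobordism intertwines the two copies of $\tau$.

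Next I would describe the involution at the unperturbed end. Via the holonomy map, $\M^{U(1)}(Z)=\Hom(\pi_1(Z),U(1))=\Hom(H_1(Z;\Z),U(1))$. The homological hypothesis $H_*(Z;\Z)\cong H_*(D^2\times T^2;\Z)$ gives $H_1(Z;\Z)\cong\Z^2$, so this space is identified with $U(1)\times U(1)=T^2$. Under this identification the involution $\tau\colon [A_L]\mapsto [A_L^*]$ corresponds to sending a representation to its inverse, i.e.\ $(z_1,z_2)\mapsto(z_1^{-1},z_2^{-1})$, which is precisely the hyperelliptic involution. Its fixed set consists of those $\rho\colon H_1(Z;\Z)\to U(1)$ with $\rho^2=1$, that is, representations into $\{\pm 1\}$; there are exactly $|H^1(Z;\Z/2)|=4$ of them, and by Lemma \ref{pci} these are the central classes. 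Taking the quotient gives the pillowcase with its four orbifold points.

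The only real thing to check is the $\tau$-equivariance of the cobordism, and this is forced by the symmetry of the perturbed equation noted above rather than requiring any new transversality input — so no serious obstacle arises. Combining the three steps, $\M^{\Red}_\sigma(Z)=\M^{U(1)}_\sigma(Z)/\tau\cong T^2/\text{(hyperelliptic)}$, which is the pillowcase; this also pins down part (a) of Theorem \ref{str} by identifying the four singular points with the four central flat $SU(2)$-connections on $Z$.
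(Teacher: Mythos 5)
Your proposal is correct and follows essentially the same route as the paper: combine Lemma \ref{pci} (the reducible locus is $\M^{U(1)}_{\sigma}(Z)/\tau$ with fixed set the four classes of holonomy in $\{\pm 1\}$) with Proposition \ref{TRS} (the $U(1)$-moduli space is a $2$-torus). The only difference is in the last step: you identify $\tau$ explicitly as the hyperelliptic involution by noting the $\tau$-equivariance of the cobordism $\mathcal{Z}^{U(1)}_I$ and computing $\tau$ at the unperturbed end, whereas the paper gets the pillowcase more quickly by observing that the involution on the $2$-torus has exactly four fixed points, each a smooth point of $\M^{U(1)}_{\sigma}(Z)$ — both closings are valid.
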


\begin{proof}
\autoref{pci} tells us that $\M^{\Red}_{\sigma}(Z)$ is the quotient of $\M^{U(1)}_{\sigma}(Z)$ under an involution whose fixed point set consists of flat connection on $Z$ with holonomy group inside $\{\pm 1\}$. Since $|b^1(Z; \Z/2)|=4$, there are four of them. Moreover each of them are smooth in $\M^{U(1)}_{\sigma}(Z)$. Now we know $\M^{U(1)}_{\sigma}(Z)$ is a $2$-torus. 
\end{proof}

\subsection{The Kuranishi Picture}

Now we analyze the Kuranishi picture about a reducible instanton $[A]$ inside the moduli space $\M_{\sigma}(Z)$. Let $[A] \in \M^{\Red}_{\sigma}(Z)$ take the form $A=A_L \oplus A_L^*$ with respect to a reduction $E=\underline{\C} \oplus \underline{\C}$. 

\begin{lem}\label{iic}
Any central instanton $[A] \in \M_{\sigma}(Z)$ is isolated from the irreducible locus $\M_{\sigma}^*(Z)$ with respect to small perturbations $\sigma \in \mathcal{P}_{\mu}$. 
\end{lem}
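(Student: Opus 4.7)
The plan is to analyze the local $SU(2)$-equivariant Kuranishi model of $\M_\sigma(Z)$ at the central instanton $[A]$ and show that its zero set lies entirely in the reducible locus.

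First I would identify the tangent space of the Kuranishi model. Since $A$ is central, the adjoint representation is trivial, so
\[
H^1_A\;\cong\;H^1(Z;\R)\otimes\SU(2)\;\cong\;\R^2\otimes\SU(2)\;\cong\;\SU(2)^2,
\]
with coordinates $(X_1,X_2)$; the stabilizer $\Stab(A)=SU(2)$ acts diagonally by conjugation, and the Kuranishi map $\mathfrak O_\sigma\colon H^1_A\to H^2_A$ is $SU(2)$-equivariant.

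Second, I would identify the leading term. The quadratic contribution $(a\wedge a)^+$ in the ASD equation, together with the cup product $H^1(Z;\R)\otimes H^1(Z;\R)\to H^2(Z;\R)$ (nontrivial because $Z$ is a homology $D^2\times T^2$), produces the obstruction $\mathfrak O_0(X_1,X_2)=[X_1,X_2]\in\SU(2)$. This is precisely the Kuranishi obstruction of $\chi(Z)$ at the trivial representation, whose zero locus $\{[X_1,X_2]=0\}$ consists of commuting pairs and descends to the pillowcase under the $SU(2)$-quotient, in parallel with the description of $\chi(T^3)$ near a central class preceding Proposition \ref{STAB}.

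Third, for small $\sigma\in\mathcal P_\mu$, the perturbation does not disturb this leading behavior. The exponential decay of $\sigma$ along the end (Definition \ref{hped}), combined with the vanishing of the linearization of the holonomy perturbation at central connections (as recorded for the abelian case just before Proposition \ref{TRS}, which extends to the $SU(2)$-central case by the same mechanism applied to each diagonal matrix entry), shows that $D\sigma|_A$ contributes no linear term to the asymptotic part of the $SU(2)$-equivariant obstruction at $A$. Thus $\mathfrak O_\sigma=\mathfrak O_0+O(|X|^3)$, and its zero set stays pinned to a small thickening of the commuting locus. Every commuting pair $(X_1,X_2)\in\SU(2)^2$ lies in a common maximal torus of $SU(2)$, so its orbit has stabilizer containing a $U(1)$; hence the whole Kuranishi zero set maps into $\M^{\Red}_\sigma(Z)$, yielding a neighborhood of $[A]$ disjoint from $\M^*_\sigma(Z)$.

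The main obstacle is the quantitative comparison of the bracket term with the higher-order and perturbation corrections on the whole irreducible stratum of $H^1_A$: the lower bound $|[X_1,X_2]|\gtrsim|X|^2$ degenerates near the reducible locus, so controlling $\mathfrak O_\sigma^{-1}(0)$ in this ``almost reducible'' region requires the $SU(2)$-equivariant structure together with the center-manifold description of Proposition \ref{EPA}, which identifies the asymptotic obstruction of $\M_\sigma(Z)$ at $[A]$ with that of $\chi(T^3)$ at its central class. The smallness of $\|\sigma\|$ is needed precisely so that the perturbation corrections remain strictly subdominant to the bracket term throughout the irreducible stratum in the neighborhood of the origin.
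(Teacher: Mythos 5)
Your overall strategy coincides with the paper's: set up the equivariant Kuranishi model at the central class, identify $H^1_A\cong H^1(Z;\R)\otimes\SU(2)\cong\SU(2)^2$ with the diagonal conjugation action and the bracket $[X_1,X_2]$ as the obstruction, and observe that commuting pairs have stabilizer containing a $U(1)$. However, your treatment of the perturbation contains a genuine error. The vanishing you invoke from the abelian discussion before Proposition \ref{TRS} is the vanishing of the \emph{diagonal} ($i\R$-valued) part of the linearization, which holds at central connections precisely because $\Hol_{q}A_L=\Hol_{q}A_L^*$ there. The off-diagonal part does not vanish: the paper's formula $\big(D\sigma_{\C}|_{A_{\C}}(a)\big)_x=-\sum_\alpha(\int_{q_{\alpha,x}}a)\cdot(\Hol_{q_{\alpha,x}}A_L)^2\otimes\omega_\alpha|_x$ gives $(\Hol A_L)^2=1$ at a central connection, so $D\sigma|_A\neq 0$ in general. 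What \emph{is} true, and what the paper actually uses, is the zeroth-order statement: $\sigma(A)=0$ because the traceless part of a central holonomy is zero. Hence the central instanton is genuinely flat for every $\sigma\in\mathcal{P}_\mu$, and the paper reduces the whole question to the unperturbed moduli space, which consists exactly of flat connections.

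The second gap is the one you flag yourself but do not close. Knowing $\mathfrak{O}_\sigma=[X_1,X_2]+(\text{corrections})$ and concluding that $\mathfrak{O}_\sigma^{-1}(0)$ lies in a ``small thickening of the commuting locus'' proves nothing: a thickening of the commuting variety in $\SU(2)^2$ is full of irreducible points, and the degeneration of the bound $|[X_1,X_2]|\gtrsim|X|^2$ near the reducible stratum is exactly where irreducible zeros could hide. The paper sidesteps this entirely: in the unperturbed, degree-zero setting the moduli space \emph{is} the space of flat connections, so the Kuranishi zero set is not merely approximated by the commuting variety but is identified with it (as in the computation preceding Proposition \ref{STAB}), and $\mathfrak{o}_A^{-1}(0)/SU(2)\simeq\R^2/(\Z/2)$ coincides with a neighborhood of $[A]$ in $\M^{\Red}(Z)$. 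To repair your argument you would either need to carry out the quantitative comparison you defer (which is delicate for exactly the reason you state), or adopt the paper's reduction to the flat case via $\sigma(A)=0$.
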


\begin{proof}
When $A$ is central, we know $\sigma(A)=0$ for any perturbation $\sigma \in \mathcal{P}_{\mu}$. Thus \autoref{l2.3} tells us $A$ is actually flat. Then it suffices to prove the result in the non-perturbed case. 

Note that the non-perturbed moduli space $\M(Z)$ is identified with the space of gauge equivalent classes of flat connections. Following the same argument as in the paragraph above \autoref{STAB}, we see the Kuranishi obstruction map at $[A]$ in the $\M(Z)$ is given by 
\begin{equation}
\begin{split}
\mathfrak{o}_A: H^1(Z; \SU(2)) & \longrightarrow H^2(Z; \SU(2)) \\
e^1 \otimes X_1 + e^2 \otimes X_2 & \longmapsto e^1 \wedge e^2 \otimes [X_1, X_2],
\end{split}
\end{equation}
where $\{e^1, e^2\}$ is an orthonormal frame of $\mathcal{H}^1(Z)$. Since the stabilizer of $A$ is $SU(2)$, a neighborhood of $[A]$ in $\M(Z)$ is identified with a neighborhood in the $SU(2)$-quotient $\mathfrak{o}^{-1}_A(0) / SU(2) \simeq \R^2 / (\Z/2)$. This proves that a neighborhood of $[A]$ in $\M(Z)$ is the same as a neighborhood of $[A]$ in the reducible locus $\M^{\Red}(Z)$. Thus $[A]$ is isolated from the irreducible locus.
\end{proof}

Given a generic small perturbation $\sigma$, any instanton $[A] \in \M_{\sigma}(Z)$ asymptotic to a central connection in $\chi(T^3)$ has to be central itself. Moreover \autoref{iic} tells us that the central instantons $[A] \in \M_{\sigma}(Z)$ is isolated from the irreducible locus. Thus we can choose the neighborhood $\mathcal{O}_c$ of central connections in $\chi(T^3)$ such that all irreducible instantons $[A] \in \M^*_{\sigma}(Z)$  has their asymptotic values outside $\mathcal{O}_c$. This in turn enables us to pick a weight $\delta > 0$ uniformly for all irreducible instantons in the Fredholm package. 

Now let $[A] \in \M^{\Red}_{\sigma}(Z)$ be a non-central reducible instanton satisfying $\partial_+([A]) \notin \mathcal{O}_c$. We may write $A=d+a$ with $a \in \hat{L}^2_{k, \delta}(Z, \Lambda^1 \otimes \SU(2))$ and $A_L=d+a_L$ with $a_L \in \hat{L}^2_{k, \delta}(Z,  \Lambda^1 \otimes i\R)$ (recall the space $\hat{L}^2_{k, \delta}$ was defined in (\ref{e7.3.2})). The perturbed deformation complex at $[A]$ is 
\begin{equation}\tag{$E_{\delta, \sigma}$}\label{su2t}
\hat{L}^2_{k+1, \delta}(Z, \SU(2))  \xrightarrow{-d_A} \hat{L}^2_{k, \delta}(Z, \Lambda^1 \otimes \SU(2)) \xrightarrow{d^+_{A, \sigma}} L^2_{k-1, \delta}(Z, \Lambda^+ \otimes \SU(2)),
\end{equation}
where $d^+_{A, \sigma}=d^+_A - D \sigma|_A$. With respect to the isomorphism 
\[
\begin{split}
i\R \oplus \C & \longrightarrow \SU(2) \\
(v, z) & \longmapsto
\begin{pmatrix}
v & z \\
-\bar{z} & -v 
\end{pmatrix}
\end{split}
\]
the induced connection on $\SU(2)$-forms $\Omega^j(Z, \SU(2))$ splits as $d \oplus A_{\C}$ with $A_{\C} = A^{\otimes2}_L$, the holonomy perturbation splits as $\sigma_L \oplus \sigma_{\C}$, and the deformation complex $E_{\delta, \sigma}$ splits as the direct sum of the following two complexes: 
\begin{equation}\tag{$E^{U(1)}_{\delta, \sigma}$}\label{su2tu}
\hat{L}^2_{k+1, \delta}(Z, i\R) \xrightarrow{-d} \hat{L}^2_{k, \delta}(Z, \Lambda^1\otimes i\R) \xrightarrow{d^+_{\sigma}} L^2_{k-1, \delta}(Z, \Lambda^+ \otimes i\R)
\end{equation}
and 
\begin{equation}\tag{$E^{\C}_{\delta, \sigma}$}\label{su2tc}
L^2_{k+1, \delta}(Z, \C) \xrightarrow{-d_{A_{\C}}} L^2_{k, \delta}(Z, \Lambda^1 \otimes \C) \xrightarrow{d^+_{A_{\C}, \sigma}} L^2_{k-1, \delta}(Z, \Lambda^+ \otimes \C), 
\end{equation}
where $d^+_{A_{\C}, \sigma} = d^+_{A_{\C}} - D \sigma_{\C}|_{A_{\C}}$. Since we are working with weighted Sobolev spaces, the homology of the complex can be identified as 
\[
H^0_A(E_{\delta , \sigma})= \ker d_A, \; H^1_A(E_{\delta, \sigma})= \ker d^+_{A, \sigma} \cap \ker d^*_{A, \delta}, H^2_A(E_{\delta, \sigma}) = \ker d^{+, *}_{A, \sigma, \delta},
\]
where 
\[
d^*_{A, \delta} = e^{-\delta \tau} d^*_A e^{\delta \tau}, \; d^{+, *}_{A, \sigma, \delta} = e^{-\delta \tau} d^{+, *}_{A, \sigma} e^{\delta\tau}
\]
are the $L^2_{\delta}$-adjoints. We further note that the complex $E_{\delta, \sigma}$ is $U(1)$-equivariant, where the $U(1)$-action on $\SU(2)$-valued forms are induced by the action on the Lie algebra $\SU(2)$:
\[
e^{i\theta} \cdot (v, z) = (v, e^{i2\theta}z). 
\]
Earlier in \autoref{Kur} we have considered the Kuranishi obstruction map at a flat connection on a $3$-manifold to study the local structure. The same strategy can be applied to the four dimensional case as well.

Following \cite[Theorem 12.1.1]{MMR} there exists a $U(1)$-invariant neighborhood $V_A$ of $0$ in $H^1_A(E_{\delta, \sigma})$ together with a $U(1)$-equivariant map
\[
\mathfrak{o}_A : V_A \to H^2_A(E_{\delta, \sigma})
\]
such that the $U(1)$-quotient of $\mathfrak{o}^{-1}_A(0)$ is isomorphic to a neighborhood of $[A] \in \M_{\sigma}(Z)$ as a stratified space. In particular when $H^1_A(E^{\C}_{\delta, \sigma})=H^2(E^{\C}_{\delta, \sigma})=0$ at $A$, we have $H^2_A(E_{\delta, \sigma})=0$. We see that 
\[
\mathfrak{o}_A^{-1}(0) = H^1_A(E^{U(1)}_{\delta, \sigma}) \simeq H^2(Z; i\R) = i\R \oplus i\R.
\]
Thus $[A]$ is isolated from the irreducible part $\M^*_{\sigma}(Z)$. The next proposition shows this situation fits with all but finitely many reducible instantons $[A] \in \M^{\Red}_{\sigma}(Z)$.

\begin{prop}\label{gph}
With respect to a small generic perturbation $\sigma \in \mathcal{P}_{\mu}$, for all but finitely many non-central reducible instantons $[A] \in \M^{\Red}_{\sigma}(Z)$ satisfying $\partial_+([A]) \notin \mathcal{O}_c$ one has 
\[
H^1_A(E^{\C}_{\delta, \sigma}) = 0.
\]
Moreover $H^1_A(E^{\C}_{\delta, \sigma}) \cong \C$ for the finitely many exceptional reducibles.
\end{prop}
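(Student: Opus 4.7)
The plan is to study $H^1_A(E^\C_{\delta,\sigma})$ as $[A]$ varies over the reducible locus, viewed (via the double cover of Lemma~\ref{pci}) as a family of complex-linear Fredholm deformation complexes parametrized by the two-torus $\M^{U(1),*}_\sigma(Z)$ of Proposition~\ref{TRS}. The goal is to show that, generically, the determinant line bundle of this family admits a section transverse to the zero section, whose zero set is then discrete and hence finite by compactness (Corollary~\ref{RPI}).

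First I would verify that $H^0_A(E^\C_{\delta,\sigma})=0$ for every noncentral reducible. Because $A_L$ is noncentral, there exists a loop $\gamma\subset Z$ with $\Hol_\gamma A_L\notin\{\pm 1\}$, and then $\Hol_\gamma A_\C=(\Hol_\gamma A_L)^2\neq 1$, so the flat $\C$-line bundle associated with $A_\C$ has no nonzero global covariantly constant section on a connected manifold. Combined with the decay built into $L^2_{k+1,\delta}$, this forces $\ker d_{A_\C}=0$.

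Next I would compute the complex Fredholm index of $E^\C_{\delta,\sigma}$ and expect to find it zero at every point of $\M^{U(1),*}_\sigma(Z)$. The inputs are $\chi(Z)=\sigma(Z)=0$, the Künneth fact that for noncentral $\Gamma_L$ the asymptotic twisted cohomology $H^*(T^3;\Gamma_\C)$ vanishes in every degree (a nontrivial character on any $S^1$-factor kills the whole tensor product), and the vanishing of the APS $\rho$-invariant of $\Gamma_\C$ on $T^3$ via the orientation-reversing isometry $-\id$. Together with the $H^0$-vanishing this forces $\dim_\C H^1_A(E^\C_{\delta,\sigma})=\dim_\C H^2_A(E^\C_{\delta,\sigma})$ at every noncentral reducible. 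The family of complex-linear Fredholm operators $\mathcal{D}_{[A_L],\sigma}:=d^+_{A_\C,\sigma}\oplus d^*_{A_\C,\delta}$ then has index zero and hence a well-defined complex determinant line bundle $\det\mathcal{D}\to\M^{U(1),*}_\sigma(Z)$ with a canonical section vanishing precisely where $H^1\neq 0$. Following the template of Proposition~\ref{TSC}, I would consider the universal zero locus inside $\mathcal{P}_\mu\times\M^{U(1),*}_\sigma(Z)$ and show that varying $\pmb{\omega}$ surjects onto the obstruction cokernel using the density of the loop family $\{q_\alpha\}$ and the bounds of Proposition~\ref{ehp}. Sard-Smale then produces, for a generic $\sigma$ with $\|\pmb{\omega}\|_W<\mathfrak{c}_2$, a transverse section of a complex line bundle over a real two-manifold; its zero set is discrete, hence finite by compactness, and at each transverse zero $\dim_\C\ker\mathcal{D}=1$, giving $H^1_A(E^\C_{\delta,\sigma})\cong\C$.

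The main obstacle I anticipate is this last transversality: the holonomy perturbation $D\sigma_\C|_{A_\C}$ depends on $\pmb{\omega}$ only through the squared-holonomy data of $A_L$, so one must verify that this dependence genuinely spans the obstruction at every noncentral reducible. The smallness constraint $\|\pmb{\omega}\|_W<\mathfrak{c}_2$ inherited from Proposition~\ref{TRS} further restricts the allowable perturbations, so the universal argument of Proposition~\ref{TSC} must be refined to produce generic perturbations inside this open ball while remaining compatible with the earlier transversality choices made on the irreducible stratum.
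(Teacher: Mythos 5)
Your broad strategy is close to the paper's: both establish $H^0_A(E^{\C}_{\delta,\sigma})=0$, both show the complex Fredholm index is zero so $\dim_\C \mathcal{H}^1 = \dim_\C \mathcal{H}^2$, and both invoke a Sard--Smale argument over the universal family $\{(\sigma,[A_L])\}$ together with compactness of the reducible locus. On the index computation, the paper takes a shortcut: $\ind E^\C_\delta = \ind E_\delta - \ind E^{U(1)}_\delta = 1 - 1 = 0$, whereas you propose a direct APS computation using $\chi(Z)=\sigma(Z)=0$, vanishing of $H^*(T^3;\Gamma_\C)$ (which does hold for noncentral $\Gamma_L$, since $\Gamma_L^{\otimes 2}$ is then a nontrivial flat line bundle on $T^3$ and Künneth kills everything), and $\rho(\Gamma_\C)=0$ by the orientation-reversing isometry. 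Both routes are valid; the paper's subtraction is just shorter.

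There is, however, a genuine gap in the determinant-line-bundle framing. You reduce the statement to saying that the canonical section of $\det\mathcal{D}$ becomes transverse for generic $\sigma$, and that Sard--Smale delivers this. That argument only works if the \emph{universal} determinant section on $\mathcal{P}_\mu \times \M^{U(1),*}$ is a submersion near each zero, so that one can then restrict to a generic fiber. But at a point where $\dim_\C\ker\mathcal{D}=n\geq 2$, writing $\xi_2$ for the finite-dimensional Kuranishi operator near the reference reducible $\bar A$ (so the local section is $\det \xi_2$), Jacobi's formula gives $D(\det\xi_2)=\tr(\adj(\xi_2)\cdot D\xi_2)$; when $\xi_2(\bar A)=0$ and $n\geq 2$, one has $\adj(\xi_2)=0$, hence the derivative of the determinant section vanishes in \emph{every} direction, including the perturbation direction. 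So the universal determinant section is not transverse to zero at such a point, Sard--Smale cannot be applied directly, and you cannot conclude that the zeros are generically simple. The paper sidesteps this exactly by \emph{not} taking the determinant: it records $\xi_2$ itself as a map to $\Hom_\C(\mathcal{H}^1,\mathcal{H}^2)$, shows this map is a submersion (using the loop density and \cite[Proposition~65]{H94}), and then stratifies $\Hom_\C(\mathcal{H}^1,\mathcal{H}^2)$ by corank $S_i$, of complex codimension $i^2$. Sard--Smale applied to the preimages $\xi^{-1}(\{0\}\times S_i)$ yields Fredholm projections of real index $2-2i^2$, so for generic $\sigma$ only $S_0$ and $S_1$ survive, $S_1$ is discrete, and simple zeros are the only ones that occur. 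You should replace the determinant-section transversality step by this rank-stratification argument (or some equivalent substitute that explicitly rules out corank $\geq 2$ loci), after which the rest of your outline goes through.
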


\begin{proof}
Given $A_L \in \A^{U(1)}_{k, \delta}(Z)$, we write $A_{\C}:=A_L^{\otimes 2}$ for the connection on the trivial line bundle $\underline{\C} \to Z$. Let $\bar{A}=\bar{A}_{L} \oplus \bar{A}^*_{L}$ be a non-central flat connection satisfying $\partial_+([\bar{A}]) \notin \mathcal{O}_c$ and $H^1_{\bar{A}}(E^{\C}_{\delta}) \neq 0$. We write $\mathcal{H}^1:=H^1_{\bar{A}}(E^{\C}_{\delta})$, $\mathcal{H}^2:=H^2_{\bar{A}}(E^{\C}_{\delta})$. Note that $i \R = H^0(Z; \ad \bar{A}) = H^0(Z; i\R) \oplus H^0_{\bar{A}}(E^{\C}_{\delta})$. Thus $H^0_{\bar{A}}(E^{\C}_{\delta})=0$. Denote by $\Pi: L^2_{k-1}(Z, \Lambda^+ \otimes \C)  \to \im d^+_{\bar{A}_{\C}}$ the orthogonal projection to the image of $d^+_{\bar{A}_{\C}}$. Note that 
\[
\ind E^{\C}_{\delta} = \ind E_{\delta} - \ind E^{U(1)}_{\delta} = 1 -1 =0
\]
for all $A \in \M^{\Red}(Z)$ with $\partial_+([A]) \notin \mathcal{O}_c$ . We conclude $\dim_{\C} \mathcal{H}^1= \dim_{\C} \mathcal{H}^2$. Let us consider the map
\begin{equation}
\begin{split}
\eta: \mathcal{P}_{\mu} \times \A^{U(1)}_{k, \delta}(Z) \times \ker d^*_{\bar{A}, \delta} & \longrightarrow \im d^+_{\bar{A}_{\C}} \\
(\sigma, A_L, b) & \longmapsto \Pi (d^+_{A_{\C}, \sigma} b). 
\end{split}
\end{equation}
The differential of $\eta$ at $(0, \bar{A}_L, b)$ on the third component is given by 
\[
D\eta|_{(0, \bar{A}_L, b)} (0, 0, \beta) = \Pi (d^+_{\bar{A}_{\C}} b), 
\]
which is surjective. By the implicit function theorem we can find a neighborhood $U \times V \subset \mathcal{P}_{\mu} \times \ker d^*_{\delta}$ of $(0, 0)$ and a map $h: U \times V \times \mathcal{H}^1 \to \ker d^*_{\bar{A}, \delta}$ such that for all  $(\sigma, a_L, b) \in U \times V \times \mathcal{H}^1$ one has 
\[
\eta(\sigma, \bar{A}_L + a_L, b + h(\sigma, a_L, b)) = 0.
\]
In particular $d^+_{A_{\C}, \sigma}(b + h(\sigma, a_L, b)) \in \mathcal{H}^2$. Writing $A_L = \bar{A}_L + a_L$. This leads us to a map 
\begin{equation}
\begin{split}
\xi: U \times V & \longrightarrow L^2_{k-1, \delta}(Z, \Lambda^+\otimes i\R) \times \Hom_{\C}(\mathcal{H}^1, \mathcal{H}^2) \\
(\sigma, a_L) & \longmapsto \big(F^+_{A_L} - \sigma_L(A_L), b \mapsto d^+_{A_{\C}, \sigma}(b + h(\sigma, a_L, b)) \big)
\end{split}
\end{equation}
We write $\xi = \xi_1 \times \xi_2$ for its decomposition into the two factors in its range. The argument in \autoref{TSC} implies that $\xi_1$ is a submersion. Since the loops $q_{\alpha}$ constructed in the holonomy perturbation are dense at each point, Proposition 65 in \cite{H94} implies that we only need to vary finitely many components in $\pmb{\omega}=\{\omega_{\alpha}\}$ to ensure that $\xi_2$ is a submersion. Thus we conclude the map $\xi$ is a submersion. Let $S_i \subset \Hom_{\C}(\mathcal{H}^1, \mathcal{H}^2)$ be the stratum consisting of linear maps of complex dimension-$i$ kernel (one may consult Koschorke's work \cite{K70} on the stratification of Fredholm maps between Hilbert spaces while here we are using a finite-dimensional version). Then the projection map $\pi: \xi^{-1}( \{0 \} \times S_i) \to U$ is Fredholm of real index $2-2i^2$. By the Sard--Smale theorem, for a generic perturbation $\sigma \in U$ only the top two strata $S_0$ and $S_1$ survive in the image of $\xi|_{\{\sigma\} \times V}$, which corresponds to $[A] \in \M^{\Red}_{\sigma}(Z)$ having $\dim_{\C} H^1_A(E^{\C}_{\delta, \sigma})=0, 1$ respectively. Moreover the set of connections $A$ attaining $\dim_{\C} H^1_A(E^{\C}_{\delta, \sigma}) = 1$ is discrete in $V$. 

Since $\M^{\Red}(Z)$ is compact, we only need to run the argument finitely many times to get generic small perturbations $\sigma \in \mathcal{P}_{\mu}$ so that for all but finitely many reducible instantons $[A] \in \M^{\Red}_{\sigma}(Z)$, the corresponding cohomology satisfies $H^1_A(E^{\C}_{\delta, \sigma}) = 0$, while $H^1_A(E^{\C}_{\delta, \sigma}) = \C$ for the finite exceptions.
\end{proof}

\begin{rem}\label{gph1}
The only reason we impose the condition that $\partial_+{[A]} \notin \mathcal{O}_c$ is to ensure the complex $E_{\delta, \sigma}$ is Fredholm. Since we know all central instantons are isolated from the irreducible part $\M^*(Z)$, any reducible instanton $[A] \in \M^{\Red}(Z)$ with $\partial_+{[A]} \in \mathcal{O}_c$ is also isolated from $\M^*(Z)$ once we choose $\mathcal{O}_c$ small enough. In the language of representation variety, this is equivalent to the vanishing of the twisted cohomology $H^1(Z; \C_{A_{\C}}) =0$. This property is also preserved under small perturbations. However in the perturbed case we need to vary the weight $\delta$ to define the cohomology. 
\end{rem}

With the help of \autoref{gph}, we have the following description of a neighborhood of the reducible locus $\M^{\Red}_{\sigma}(Z)$ in the total moduli space $\M_{\sigma}(Z)$.

\begin{prop}\label{nbr}
Given a small generic perturbation $\sigma \in \mathcal{P}_{\mu}$, all but finitely many reducible instantons $[A] \in \M^{\Red}_{\sigma}(Z)$ are isolated from the irreducible moduli space $\M^*_{\sigma}(Z)$. Moreover any reducible instanton $[A]$ not isolated from $\M^*_{\sigma}(Z)$ is non-central, and has a neighborhood $U_{[A]}$ in $\M_{\sigma}(Z)$ such that $U_{[A]} \cap \M^*_{\sigma}(Z) \simeq [0, \epsilon)$ for some $\epsilon >0$. 
\end{prop}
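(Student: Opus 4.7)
My plan is to partition the reducible locus according to the ranks of the twisted cohomology groups identified in Proposition \ref{gph}, and then analyze each class via its Kuranishi model. Shrinking the neighborhood $\mathcal{O}_c$ of the central classes in $\chi(T^3)$ if necessary, every reducible $[A]$ with $\partial_+([A])\in \mathcal{O}_c$ has $H^1(Z;\C_{A_\C})=0$ (Remark \ref{gph1}) and hence is isolated from $\M^*_\sigma(Z)$; Lemma \ref{iic} handles the central cases themselves. For the remaining $[A]\in\M^{\Red}_\sigma(Z)$ with $\partial_+([A])\notin\mathcal{O}_c$, Proposition \ref{gph} gives two subtypes: a dense open subset where $H^1_A(E^\C_{\delta,\sigma})=0$, and a finite set of ``bifurcation'' reducibles where $H^1_A(E^\C_{\delta,\sigma})\cong\C$.

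For the generic subtype, I will combine the splitting $E_{\delta,\sigma}=E^{U(1)}_{\delta,\sigma}\oplus E^\C_{\delta,\sigma}$ with the regularity of the $U(1)$-moduli space from Proposition \ref{TRS} to conclude $H^2_A(E^{U(1)}_{\delta,\sigma})=0$, while the vanishing $H^1_A(E^\C_{\delta,\sigma})=0$ together with $\ind E^\C_{\delta,\sigma}=0$ forces $H^2_A(E^\C_{\delta,\sigma})=0$. Thus $H^2_A(E_{\delta,\sigma})=0$, the Kuranishi obstruction map is trivial, and a neighborhood of $[A]$ in $\M_\sigma(Z)$ is the $U(1)$-quotient of $H^1_A(E_{\delta,\sigma})=\R^2$. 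The $U(1)$-action here is trivial, so this neighborhood is exactly the reducible stratum, proving $[A]$ is isolated from $\M^*_\sigma(Z)$.

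For a bifurcation reducible, $H^1_A(E_{\delta,\sigma})=\R^2\oplus\C$ and $H^2_A(E_{\delta,\sigma})=\C$, with $U(1)$ acting trivially on $\R^2$ and by weight $2$ on both $\C$ factors. Writing the Kuranishi obstruction map in terms of these splittings and using $U(1)$-equivariance together with the vanishing of its differential at the origin, I can express it as
\[
\mathfrak{o}_A(x,z)=z\cdot g(x,|z|^2),\qquad g\colon \R^2\times\R_{\geq 0}\to\C,\ g(0,0)=0.
\]
The first-order Taylor expansion in $x$ of $g(x,0)$ is the cup-product pairing $H^1_A(E^{U(1)}_{\delta,\sigma})\otimes H^1_A(E^\C_{\delta,\sigma})\to H^2_A(E^\C_{\delta,\sigma})$ coming from the bracket. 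I will promote the transversality argument in Proposition \ref{gph}: instead of only asking the map $\xi=(\xi_1,\xi_2)$ to be transverse to $\{0\}\times S_0$, I will ask it to be transverse to $\{0\}\times S_1$ at the bifurcation points, which by a Sard--Smale argument over the (finite) set of bifurcation reducibles is again achieved by generic small $\sigma_{\pmb\omega}$. This transversality translates precisely to the statement that $D_x g|_{(0,0)}\colon \R^2\to\C$ is a linear isomorphism. The implicit function theorem then solves $g(x,r)=0$ as a smooth arc $x=x(r)$ with $x(0)=0$ for $r\in[0,\epsilon)$, so
\[
\mathfrak{o}_A^{-1}(0)\cap\text{(nbhd)}\;=\;\{z=0\}\;\cup\;\{(x(|z|^2),z):|z|<\sqrt{\epsilon}\},
\]
and quotienting by $U(1)$ (which acts freely off $z=0$) yields a half-open arc of irreducibles emanating from $[A]$, giving $U_{[A]}\cap\M^*_\sigma(Z)\simeq[0,\epsilon)$ as required. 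The main obstacle I anticipate is Step~3, namely upgrading the transversality in Proposition~\ref{gph} to pick up the codimension-two rank-drop stratum $S_1$ simultaneously with keeping the reducible $U(1)$-moduli space regular; this requires carefully exploiting the density of the holonomy loops $q_\alpha$ at the bifurcation connections, along the lines of \cite[Proposition 65]{H94} invoked in Proposition \ref{gph}.
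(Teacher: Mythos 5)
Your proposal is correct and follows essentially the same route as the paper: split the reducibles by $\dim_{\C}H^1_A(E^{\C}_{\delta,\sigma})$ using Proposition \ref{gph} and Remark \ref{gph1}, and analyze the bifurcation points through the $U(1)$-equivariant Kuranishi model $\mathfrak{o}_A(x,z)=z\cdot g(x,|z|)$ with nondegenerate linear part in $x$. The only substantive remarks are that the transversality to the stratum $S_1$ you flag as the main obstacle is already contained in Proposition \ref{gph} (it is precisely how the discreteness of the bifurcation set is derived, and at a corank-one point it translates into the surjectivity of $D_xg|_{(0,0)}:\R^2\to\C$), so no promotion of that argument is needed; and where the paper additionally invokes the complex-linearity of $(E^{\C}_{\delta,\sigma})$ to conclude the arc of irreducibles is exactly $\{x=0\}$ (the precise local model quoted in Theorem \ref{str}), your implicit-function-theorem parametrization $x=x(|z|^2)$ yields the same half-open arc $[0,\epsilon)$ after the $U(1)$-quotient, which is all Proposition \ref{nbr} requires.
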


\begin{proof}
Let $[A] \in \M_{\sigma}(Z)$ have $H^1_A(E^{\C}_{\delta, \sigma})=\C$. From \autoref{gph} and \autoref{gph1} there are only finitely many such instantons, all of which are non-central. The irreducible part of a neighborhood $U_{[A]}$ is identified with the $U(1)$-quotient of $\mathfrak{o}_A^{-1}(0) \cap V_A$, where $V_A \subset H^1_A(E_{\delta, \sigma})$ is a neighborhood of the origin. We identify $H^1_A(E_{\delta, \sigma}) \cong i\R \oplus i\R \oplus \C$, and $H^2(E_{\delta, \sigma}) \cong \C$ so that the $U(1)$-action are given respectively by 
\[
e^{i\theta} \cdot (x_1, x_2, z) = (a, b, e^{2i\theta}z), \; \; e^{i\theta} \cdot w = e^{2i\theta}w.
\]
To get a better understanding of what $\mathfrak{o}_A$ looks like, we recall its construction as follows. One first considers the map 
\begin{equation}
\begin{split}
\ker d^*_{A, \delta} & \longrightarrow \im d^+_{A, \sigma} \\ 
a & \longmapsto \Pi(F^+_{A+a} - \sigma(A+a)),
\end{split}
\end{equation}
where $\Pi: L^2_{k-1, \delta}(\Lambda^+T^*Z \otimes \SU(2)) \to \im d^+_{A, \sigma}$ is the $L^2_{\delta}$ orthogonal projection onto the image of $d^+_{A, \sigma}$. Since this map is a submersion, the implicit function theorem gives us a function $\mathfrak{q}_A: V_A \to \im d^{+, *}_{A, \sigma, \delta}$ so that $A+a+\mathfrak{q}_A(a) \in H^2(E_{\delta, \sigma})$. Then we let 
\[
\mathfrak{o}_A(a):=F^+_{A+a+\mathfrak{q}_A(a)} - \sigma(A+a+\mathfrak{q}_A(a)). 
\]
Note that $\mathfrak{o}_A$ is analytic and vanishes at least up to second order by the virtue of its construction. Thus the $U(1)$-equivariance forces the Kuranishi map to take the following form 
\[
\mathfrak{o}_A(x_1, x_2, z) =f(x_1, x_2, |z|) \cdot z. 
\] 
where $f: i\R \oplus i\R \oplus \C \to \C$ vanishes at least up to first order. We further write 
\[
f(x_1, x_2, |z|) := \sum_{i \geq 0} f_i(x_1, x_2)|z|^i. 
\]
We note that $\mathfrak{q}_A$ vanishes at least to second order. Thus the second order term of $\mathfrak{o}_A$ at $0$ is given by $Dd^+_{A, \sigma}|_0$, which is non-vanishing due to the transversality in \autoref{gph}. So up to an orientation-preserving change of coordinates, we may take $f_0(x_1, x_2)=x_1 \pm ix_2$. Since the complex $E^{\C}_{\delta, \sigma}$ is complex linear, we know that $\mathfrak{o}_A(0, 0, z) = f(0, 0, |z|) \cdot z$ is complex linear. Thus $f_i(0, 0)=0$ for all $i \geq 1$. It now follows that the zero set of $\mathfrak{o}_A$ is given by 
\[
\mathfrak{o}_A^{-1}(0) = \{x_1=x_2=0\} \cup \{z=0\}.
\]
We then conclude that the normal part is identified with $\C / U(1) \simeq [0, \infty)$. 
\end{proof}

\begin{dfn}\label{bfp}
Any reducible instanton $[A] \in \M^{\Red}_{\sigma}(Z)$ in \autoref{gph} satisfying $H^1_A(E^{\C}_{\delta, \sigma}) = \C$ is called a bifurcation point of $\M^{\Red}_{\sigma}(Z)$. 
\end{dfn}

\subsection{The Orientation}
At the end of this section, we discuss how we orient the perturbed moduli spaces. Formally an orientation of the moduli space $\M_{\sigma}(Z)$ is a trivialization of the determinant line of the index bundle associated with the deformation complex parametrized by connections in $\M_{\sigma}(Z)$. As we noted above, one cannot choose a uniform weight $\delta$ so that the deformation complex $E_{\delta, \sigma}$ is Fredholm for all instantons $[A] \in \M_{\sigma}(Z)$. So we only orient the portion of the moduli space that makes $E_{\delta, \sigma}$ Fredholm.  

Choose a weight $\delta$ and a neighborhood $\mathcal{O}_c \subset \chi(T^3)$ of the central connections as before. We write 
\[
\M_{\sigma}(Z, \mathcal{O}_c^c):=\{ [A] \in \M_{\sigma}(Z) : \partial_+[A] \notin \mathcal{O}_c\}
\]
for the portion of the moduli space consisting of instantons $[A]$ that are not asymptotic to an element in $\mathcal{O}_c$. We first orient the unperturbed reducible locus $\M^{\Red}(Z)$ as follows. Note the unperturbed deformation complex $E^{U(1)}_{\delta, \sigma}$ is independent of $[A] \in \M^{\Red}(Z)$, thus the corresponding index bundle is trivialized automatically once we fix a trivialization at a single point. According to \cite[Proposition 3.12]{M1}, its determinant is identified with 
\[
\det \Ind(E^{U(1)}_{\delta}) = \Lambda^{\max} H^0(Z; i\R)^* \otimes \Lambda^{\max} H^1(Z; i\R). 
\] 
Following the path $(E^{U(1)}_{\delta, t\sigma})$, $t \in [0, 1]$, the orientation on $\Ind(E^{U(1)}_{\delta})$ can be transported to orient the perturbed index bundle $\Ind (E^{U(1)}_{\delta, \sigma})$ for all small perturbations $\sigma$. The $SU(2)$ deformation complex $E_{\delta, \sigma}$ splits into the direct sum of two complexes $E^{U(1)}_{\delta, \sigma}$ and $E^{\C}_{\delta, \sigma}$ at a reducible instanton $[A] \in \M^{\Red}_{\sigma}(Z, \mathcal{O}_c^c)$. The complex structure on $E^{\C}_{\delta, \sigma}$ provides us with a canonical orientation. Combining with an orientation on $E^{\C}_{\delta, \sigma}$, we get a trivialization of $\det \Ind(E_{\delta, \sigma})$ on the reducible locus $\M^{\Red}_{\sigma}(Z, \mathcal{O}_c^c)$.

Now we discuss how we orient the irreducible moduli space $\M^*_{\sigma}(Z)$. Let $[A] \in M^*_{\sigma}(Z)$. Recall that we only allow small perturbations $\sigma$ so that $\partial_+[A] \notin \mathcal{O}_c$. Given any $[\Gamma] \notin \mathcal{O}_c$, \cite[Section 8.8]{MMR} tells us that the determinant line of $\Ind(-d^*_{A, \delta} \oplus d^+_A)$ over the moduli space $\B_{\delta}(Z, [\Gamma])$ consisting of gauge equivalence classes of connections on $Z$ that are asymptotic to $[\Gamma]$ exponentially with rate $-\delta$ is trivial. Since the complement $\mathcal{O}^c_c = \chi(Y) \backslash \mathcal{O}_c$ is simply-connected, the corresponding determinant line over $\B_{\delta}(Z, \mathcal{O}^c_c)$ is trivial as well. Since $\M_{\sigma}(Z, \mathcal{O}^c_c) \subset \B_{\delta}(Z, \mathcal{O}^c_c)$, then the trivialization of the determinant line over $\M^{\Red}(Z, \mathcal{O}^c_c)$ gives rise to a trivialization of the determinant line over $\M^*_{\sigma}(Z) \subset \B_{\delta}(Z, \mathcal{O}^c_c)$

To conclude, an orientation of $H^0(Z; i\R)^* \oplus H^1(Z; i\R)$ induces an orientation on $\M_{\sigma}(Z, \mathcal{O}_c^c)$ for all small perturbations $\sigma$. The orientation of $Z$ induces an orientation on $H^0(Z; i\R)$. We fix an orientation on $H^1(Z; i\R) \cong i\R \oplus i\R$ which is referred to as a homology orientation. Moreover to each bifurcation point $[A] \in \M^{\Red}_{\sigma}(Z)$ we assign a sign as follows.

\begin{dfn}
Let $[A] \in \M^{\Red}_{\sigma}(Z)$ be a bifurcation point as in \autoref{gph}. We assign $+1$ (resp. $-1$) to $[A]$ if $f_0: i\R \oplus i\R \to \C$ is orientation-preserving (resp. orientation-reversing), where $f_0$ is given by the Kuranishi obstruction map $\mathfrak{o}_A$ in the proof of \autoref{gph}. 
\end{dfn}

\begin{rem}
Since the local structure the moduli space near a bifurcation $[A]$ is modeled on $\mathfrak{o}_A^{-1}(0)$, the `$+1$' assignment describes the case when the path of irreducible instantons is pointing away from $[A]$, and the `$-1$' assignment corresponds to the case when the path is pointing into $[A]$. 
\end{rem}

\section{The Surgery Formula}\label{sffo}

\subsection{The Set-Up}

We start by giving an explicit description of the surgery operation. Let $X$ be an admissible integral homology $S^1 \times S^3$, and $\mathcal{T} \hookrightarrow X$ an embedded torus inducing a surjective map on first homology, i.e. the map $H_1(\mathcal{T}; \Z) \to H_1(X; \Z)$ given by the inclusion is surjective. We fix a generator $1_X \in H^1(X; \Z)$ serving as a homology orientation. We fix a framing of $\mathcal{T}$ by choosing an identification $\nu(\mathcal{T}) \cong D^2 \times T^2$. We write 
\[
\mu=\partial D^2 \times \{pt.\} \times \{pt.\}, \lambda=\{pt.\} \times S^1 \times \{pt.\}, \gamma=\{pt.\} \times \{pt.\} \times S^1. 
\]
Let us denote by $M:=\overline{X \backslash \nu(\mathcal{T})}$ the closure of the complement of the tubular neighborhood. Then we have $H_*(M; \Z) \cong H_*(D^2 \times T^2; \Z)$. We require the framing is chosen so that $[\lambda]$ generates $\ker \big(H_1(\partial M; \Z) \to H_1(M; \Z) \big)$ and $1_X \cdot [\gamma]=1$. Under this choice, the isotopy class of $\mu$ and $\lambda$ are fixed, but there is still ambiguity in choosing $\gamma$ which we will allow. Since the diffeomorphism type of the surgered manifold $X_{p,q}=M \cup_{\varphi_{p,q}} D^2 \times T^2$ is determined by the isotopy class of $[\varphi_{p,q}(\mu)] = p[\mu] + q[\lambda]$, the surgery operation is well-defined despite the framing ambiguity.  

Note that only when $p=1$ can the $(p,q)$-surgered manifold have the same homology as that of $S^1 \times S^3$. For simplicity, we write 
\begin{equation}
X_q=X_{1, q} \text{ and } \varphi_q=\varphi_{1, q}, \quad q \neq 0.
\end{equation}
We also write $N=D^2 \times T^2$ and identify $T^3=\partial M=-\partial N$. In this way,  $X_q = M \cup_{\varphi_q} N$ for $q \neq 0$. Since the gluing map $\varphi_q$ preserves $[\gamma]$ for all $q$, we abuse the notation $[\gamma] \in H_1(X_q; \Z)$ as a chosen generator. To define the Furuta--Ohta invariant over $X_q$, we need the following observation. 

\begin{lem}\label{l8.1}
With the assumptions above, $X_q$ is admissible for $q \neq 0$. 
\end{lem}
\begin{proof}
 Let $q \neq 0$. Any representation $\rho: \pi_1(X_q) \to U(1)$ is determined by the image $\rho([\gamma]) \in U(1)$ of the generator $[\gamma]$. So every representation on $X_q$ comes from one on $X$. Consider the following portion of Mayer--Vietoris sequence:
\begin{equation}
0 \to H^1(X_q; \C_{\rho}) \to H^1(M; \C_{\rho}) \oplus H^1(N; \C_{\rho}) \xrightarrow{j_q} H^1(T^3; \C_{\rho}),
\end{equation}
where $j_q(\alpha, \beta) = \alpha|_{\partial M} - \varphi^*_q(\beta|_{\partial N})$. Since $H^1(X, \C_{\rho})=0$, we conclude that $\forall \alpha \in H^1(M, \C_{\rho})$, $\beta \in H^1(N, \C_{\rho})$
\[
\alpha|_{\partial M} - \beta|_{\partial N} = 0 \Longleftrightarrow \alpha=0, \; \beta=0.
\]
Denote by $r_M: H^1(M; \C_{\rho}) \to H^1(T^3; \C_{\rho})$ and $r_N: H^1(N; \C_{\rho}) \to H^1(T^3; \C_{\rho})$ the restriction map. Note that $\im r_M \cap \im r_N  =\im r_M \cap \im \varphi^*_q \circ r_N.$ Thus $j_q(\alpha, \beta) = 0$ implies that $\exists \beta' \in H^1(N; \C_{\rho})$ such that $\beta'|_{\partial N}  = \varphi^*_q(\beta|_{\partial N})$, which further implies that $\alpha=0, \beta'=0$, thus $\beta=0$. This shows that $H^1(X_q; \C_{\rho})=0$. 
\end{proof}

Let $E=\C^2 \times X$ be a trivialized $\C^2$-bundle over an admissible integral homology $S^1 \times S^3$. We denote by $\M_{\sigma}(X)$ the moduli space of perturbed ASD $SU(2$)-connections on $E$. The vanishing of the twisted first homology ensures that the reducible locus $\M^{\Red}_{\sigma}(X)$ is isolated from the irreducible locus $\M^*_{\sigma}(X)$ for all small perturbations. Moreover the irreducible locus $\M^*_{\sigma}(X)$ is an oriented compact $0$-manifold. The Furuta--Ohta invariant \cite{RS1} is defined to be the signed count of irreducible instantons under a generic small perturbation:
\[
\lambda_{FO}(X):= {1 \over 4} \#\M^*_{\sigma}(X).
\]

Recall over the $(0, 1)$-surgered manifold $X_{0,1}$, we are considering the $SO(3)$-bundle $E_0$ characterized by 
\[
p_1(E_0)=0 \quad \text{ and } \quad w_2(E_0) = \omega_{\mathcal{T}},
\]
where $\omega_{\mathcal{T}} \in H^2(X_{0,1}; \Z/2)$ is Poincaré dual to the core $\{0\} \times T^2$ of the gluing $D^2 \times T^2$. 

\begin{lem}\label{l8.2}
Every (unperturbed) ASD connection on $E_0$ is flat and irreducible. 
\end{lem}

\begin{proof}
The Chern--Weil formula tells us any connection $A$ on $E_0$ satisfies $\|F^+_A\|_{L^2} = \|F^-_A\|_{L^2}$. So every ASD connection $E_0$ is flat. 

Let us write $\mathcal{T}_0 \subset X_{0, 1}$ for the core $\{0\} \times T^2$ in the gluing copy $D^2 \times T^2$. Since $w_2(E_0) = \omega_{\mathcal{T}}$ is Poincaré dual to $\mathcal{T}_0$, on the complement $X_{0,1} \backslash \mathcal{T}_0$ the bundle $E_0$ lifts to an $SU(2)$-bundle $\tilde{E}_0$. Correspondingly, each flat $SO(3)$-connection $A$ on $E_0$ lifts to a flat $SU(2)$-connection $\tilde{A}$ on $\tilde{E}_0$ whose holonomy around the meridian of $\mathcal{T}_0$ is $-1$. 

The meridian $\mu = \partial D^2 \times \{pt.\}$ is null-homologous in the complement $X_{0,1} \backslash \mathcal{T}_0$, thus bounds a surface, say $\Sigma$. Suppose the lifted connection $\tilde{A}$ is reducible. The holonomy gives a representation $\Hol_{\tilde{A}}: \pi_1(X_{0,1} \backslash \mathcal{T}_0) \to U(1)$ so that $\Hol_{\tilde{A}}(\mu) = -1$. If $g(\Sigma) = 0$, $\mu$ bounds a disk in $\Sigma$, which is impossible. If $g(\Sigma) > 0$, we can write $[\mu] = \Pi_{i=1}^g [a_i, b_i]$ for a standard basis $(a_1, b_1, ..., a_g, b_g)$ of $\pi_1(\Sigma)$. Since $U(1)$ is abelian, we must have $\Hol_{\tilde{A}}(\mu) = 1$, which gives us a contradiction. It follows that $\tilde{A}$ is irreducible, so is $A$. 
\end{proof}

\begin{cor}
The count $D^0_{\omega_{\mathcal{T}}}(X_{0, 1})$ of irreducible ASD connections on $E_0$ is independent of the choices of generic small perturbations and metrics on $X_{0,1}$. 
\end{cor}

\begin{proof}
\autoref{l8.2} implies that given a metric $g$, one can find a constant $\epsilon_g$ so that whenever $\|\sigma\|_{\mathcal{P}} < \epsilon_g$, the perturbed moduli space $\M_{\sigma}(X_{0,1}, g)$ contains no reducible instantons. Otherwise we get a sequence of reducible ASD connections in $\M_{\sigma_n}(X_{0,1}, g)$ with $\|\sigma_n\|_{\mathcal{P}} \to 0$, which converge to a unperturbed reducible ASD connection $E_0$, after taking a subsequence. 

So we use generic perturbations with norm smaller than $\epsilon_g$ to define the count $D^0_{\omega_{\mathcal{T}}}(X_{0,1})$ for $(X_{0,1}, g)$. Given a path of metrics $(g_t)_{t \in [0,1]]}$, a generic path of perturbations $\sigma_t$ with $\|\sigma_t\|_{\mathcal{P}} < \epsilon_{g_t}$ gives us a cobordism between $\M_{\sigma_0}(X_{0,1}, g_0)$ and $\M_{\sigma_1}(X_{0,1}, g_1)$, which establishes the independence on the choices of metrics. 
\end{proof}

Instead of proving \autoref{surq} directly, we prove the following special case when $q=1$. 

\begin{thm}\label{sur1}
After fixing appropriate homology orientations, one has 
\[
\lambda_{FO}(X_1) = \lambda_{FO}(X) + D^0_{w_{\mathcal{T}}}(X_{0,1}).
\]
\end{thm}

We explain why the special case is sufficient. Let us denote by $\mathcal{T}_q \subset X_q$ the image of the core $\{0\} \times T^2 \subset D^2 \times T^2$ in $X_q$ after the surgery performed. Then with respect to the framing of $\mathcal{T}_q$ given by the gluing copy $D^2 \times T^2$, $(1,1)$-surgery along $\mathcal{T}_q$ results in $X_{q+1}$, and $(0,1)$-surgery along $\mathcal{T}_q$ results in $X_{0,1}$. Thus \autoref{surq} is derived by applying \autoref{sur1} repeatedly.  

\subsection{Neck-Stretching and Gluing}

The proof of the surgery formula is based on a neck-stretching argument which we now set up. Let $X$ be an admissible integral homology $S^1 \times S^3$ which decomposes into two pieces $X = M \cup N$ along a $3$-torus oriented in the way that $T^3 = \partial M = -\partial N$. We further assume that the integral homology of $M$ and $N$ are both isomorphic to $H_*(T^2 \times D^2; \Z)$. We identify a tubular neighborhood of $T^3$ in $X$ as $(-1, 1) \times T^3$. Let $h$ be a flat metric on $T^3$. We pick a metric $g$ on $X$ so that 
\[
g|_{(-1, 1) \times T^3} = dt^2 + h.
\]
Given $T > 0$, we stretch the neck $(-1, 1) \times T^3$ of $X$ to obtain $(X_T, g_T)$:
\[
X_T = M \cup [-T, T] \times T^3 \cup N,
\]
where $\{-T\} \times T^3$ is identified with $\partial M$, and $\{T\} \times T^3$ is identified with $\partial N$. We shall write $M_T = M \cup [0, T] \times T^3$ and $N_T = [-T, 0] \times T^3 \cup N$. Then it’s natural to identify $X_T = M_T \cup N_T$. The geometric limits are denoted by $M_o:=M \cup [0, \infty) \times T^3$, $N_o:=(- \infty, 0] \times T^3 \cup N$, and $X_o = M_o \cup N_o$. 

The holonomy perturbations $\sigma_T$ on $X_T$ are constructed as follows. Each perturbation consists of three parts $\sigma_T =\bar{\sigma}_M + \bar{\sigma}_T + \bar{\sigma}_N$. $\bar{\sigma}_M$ and $\bar{\sigma}_N$ are perturbations of the first type considered in \autoref{s3}. So we have $\supp \bar{\sigma}_M \subset M \cup [0, 1] \times T^3$ and $\supp \bar{\sigma}_N \subset [-1, 0] \times T^3 \cup N$. The middle term arises as cylinder functions as the perturbations over then end: 
\begin{equation}
\bar{\sigma}_T(A) := \delta_T(t) \cdot \left(dt \wedge \sum_{\alpha} \pi_{\alpha} \grad \tau_{\alpha}(A|_{\{t\} \times Y}) \right)^+,
\end{equation}
where $\delta_T: (-T-1, T+1) \to \R$ is a smooth function satisfying 
\[
\delta_T|_{(-T-1, -T-1+\epsilon]} \equiv 0, \quad \delta_T|_{[-T, -\epsilon]} = e^{-\mu (t+T)}, \quad \delta_T(t) = \delta_T(-t),
\]
where $\epsilon \ll 1$ is a fixed positive number. After a normalization, the limit of $\delta_T$, as $T \to \infty$, consists of two parts $\delta_o:= \delta_+ + \delta_-$, where $\delta_+: (-1, \infty) \to \R$ is the function we used in \autoref{d3.6}, and $\delta_-: (-\infty, 1) \to \R$ is given by $\delta_-(t) = \delta_+(-t)$. In this way, we get a perturbation over the end of $M_o$:
\[
\bar{\sigma}_+(A):= \delta_+(t) \cdot \left(dt \wedge \sum_{\alpha} \pi_{\alpha} \grad \tau_{\alpha}(A|_{\{t\} \times Y}) \right)^+
\]
Similarly, we get a perturbation on $N_o$, denoted by $\bar{\sigma}_-$. The $L^2_{k,loc}$-limit of $\sigma_T$ then becomes 
\[
\sigma_o:= \sigma_M + \sigma_N = (\bar{\sigma}_M + \bar{\sigma}_+) + (\bar{\sigma}_- + \bar{\sigma}_N).
\]
When $T=0$, we simply write $\sigma$ for the perturbation over the original manifold $X$. After fixing the smooth functions $\delta_T$, $\sigma$ determines $\sigma_T$ for each $T \in [0, \infty)$, and vice versa. Since $\sigma_M \in \mathcal{P}_{\mu}(M_o)$ and $\sigma_N \in \mathcal{P}_{\mu}(N_o)$, we can regard a perturbation $\sigma$ over $X$ as given by the fiber product of $\sigma_M$ and $\sigma_N$ if the $\pmb{\pi}$-coefficients corresponding to the cylindrical end part of $\sigma_M$ and $\sigma_N$ coincide. We denote the space of perturbations over $X$ arising this way by $\mathcal{P}_{\mu}(X)$, equipped with the norm $\|\sigma\|_{\mathcal{P}} = \|\sigma_M\|_{\mathcal{P}} + \|\sigma_N\|_{\mathcal{P}}$. Then the transversality result in \cite{K04} implies that one can use small generic perturbations in $\mathcal{P}_{\mu}(X)$ to define the Furuta--Ohta invariant of $X$. 

With perturbations settled down, we discuss the convergence of instantons with respect to the neck-stretching process. We write 
\begin{equation}
\M_{\sigma_o}(X_o):= \M_{\sigma_M}(M_o) \times_{\partial} \M_{\sigma_N}(N_o)
\end{equation}
for the fiber product of charge-zero instantons on $M_o$ and $N_o$ under the asymptotic maps $\partial_+: \M_{\sigma_M}(M_o) \to \chi(Y)$ and $\partial_-: \M_{\sigma_N}(N_o) \to \chi(Y)$. The reducible locus is defined as 
\begin{equation}
\M^{\Red}_{\sigma_o}(X_o):= \M^{\Red}_{\sigma_M}(M_o) \times_{\partial} \M^{\Red}_{\sigma_N}(N_o).
\end{equation}
The complement is the irreducible part $\M^*_{\sigma_o}(X_o):=\M_{\sigma_o}(X_o) \backslash \M^{\Red}_{\sigma_o}(X_o)$. We introduce the following notion of convergence. 

\begin{dfn}
Let $\{T_n\}$ be a divergent increasing sequence in $[0, \infty)$. We say a sequence of instantons $[A_n] \in \M_{\sigma_{T_n}}(X_{T_n})$ converges to an instanton pair $([A_M], [A_N]) \in \M_{\sigma_o}(X_o)$ if the following hold. 
\begin{enumerate}
\item One can choose representatives $A_n$, $A_M$, and $A_N$ so that 
\[
A_n|_{M_{T_n}} \xrightarrow{L^2_{k, loc}} A_M, \qquad A_n|_{N_{T_n}} \xrightarrow{L^2_{k, loc}} A_N.
\]
\item For any $\epsilon > 0$, there exists $T_{\epsilon} > 0$ and $N_{\epsilon} \in \mathbb{N}$ so that for any $n > N_{\epsilon}$ one can choose representatives $A_n$ so that 
\[
\| A_n|_{[T_{\epsilon} - T_n, T_n - T_{\epsilon}] \times Y} - A_{\Gamma}\|_{L^2_k([T_{\epsilon} - T_n, T_n - T_{\epsilon}] \times Y)} < \epsilon
\]
for some $[\Gamma] \in \chi(Y)$ independent of $n$. 
\end{enumerate}
\end{dfn}

With respect to such a convergence notion, we get a topology over the family of moduli spaces:
\[
\M_{I, \sigma}(X):= \bigcup_{T \in [0, \infty)} \M_{\sigma_T}(X_T) \cup \M_{\sigma_o}(X_o),
\]
which decomposes into the irreducible and reducible strata correspondingly:
\[
\M^*_{I, \sigma}(X):= \bigcup_{T \in [0, \infty)} \M^*_{\sigma_T}(X_T) \cup \M^*_{\sigma_o}(X_o),
\]
\[
\M^{\Red}_{I, \sigma}(X):= \bigcup_{T \in [0, \infty)} \M^{\Red}_{\sigma_T}(X_T) \cup \M^{\Red}_{\sigma_o}(X_o).
\]

\begin{thm}\label{t8.3}
With respect to small generic perturbations $\sigma \in \mathcal{P}_{\mu}$, the irreducible stratum $\M^*_{I, \sigma}(X)$ forms a smooth orientable compact $1$-manifold. Moreover, 
\[
\M^*_{\sigma_o}(X_o) = \M^*_{\sigma_M}(M_o) \times_{\partial} \M^{\Red}_{\sigma_N}(N_o) \cup \M^{\Red}_{\sigma_M}(M_o) \times_{\partial} \M^*_{\sigma_N}(N_o). 
\]
\end{thm}

\begin{proof}
Appealing to \autoref{TIS} and \autoref{nbr}, we can find generic perturbations so that the asymptotic maps $\partial_+: \M_{\sigma_M}(M_o) \to \chi(Y)$ and $\partial_-: \M_{\sigma_N}(N_o) \to \chi(Y)$ are transverse to each other away from a prefixed small neighborhood of the central connections. 

We claim that $\M_{I, \sigma}(X)$ is compact. For the non-perturbed case, one may consult \cite[Chapter 6]{MMR}. To see why, let $[A_n] \in \M_{\sigma_{T_n}}(X_{T_n})$ be a sequence in $\M_{I, \sigma}(X)$. If the sequence $\{T_n\}$ is bounded above, \autoref{l5.14} implies that one can find a convergent subsequence of $[A_n]$. Otherwise, we can pick up a divergent increasing subsequence, still denoted by $\{T_n\}$. Since we work with the trivial bundle over $X_{T_n}$, the instanton charges of $\{[A_n]\}$ are all zero. Then the compactness reduces to the case in \autoref{p5.13}. 

Since $X$ is admissible, the intersection $\partial^+(\M^{\Red}(M_o)) \cap \partial^-(\M^{\Red}(N_o))$ of unperturbed moduli spaces, which corresponds to the reducible connections on $X$, is isolated from the image of bifurcation points from $\M_{\sigma_M}(M_o)$ and $\M_{\sigma_N}(N_o)$ (recall bifurcation points are defined in \autoref{bfp}). Since small perturbations preserve this property, it follows that the irreducible stratum $\M^*_{\sigma_o}(X_o)$ is compact. Since the dimension of both irreducible strata $\M^*_{\sigma_M}(M_o)$ and $\M^*_{\sigma_N}(N_o)$ is $1$ (see \autoref{TSC}) while $\dim \chi(Y) =3$, transversality of $\partial_+$ and $\partial_-$ implies that one factor of $\M^*_{\sigma_o}(X_o)$ is irreducible and the other is reducible. 

The reason why $\M^*_{I, \sigma}(X)$ is an orientable $1$-manifold follows from the standard gluing result in Yang--Mills theory (cf. \cite{M88}, \cite[Theorem I.2.2]{MM93}, \cite[Proposition 5]{BD95}). More precisely, one can fix a precompact subset $U \subset \chi(Y)$ away from central connections so that $\partial_+(\M^*_{\sigma_M}(M_o))$ and $\partial_-(\M^*_{\sigma_N}(N_o))$ are both contained in $U$ with respect to any small perturbations $\sigma$. Choosing generic $\sigma$, one can arrange that $\partial_+$ and $\partial_-$ intersects transversely inside $U$ when restricted to the irreducible strata. Then \cite[Theorem I.2.2]{MM93} and \cite[Proposition 5]{BD95} say that for all sufficiently large $T$, there is a gluing map 
\[
\mathfrak{gl}_T: \M^*_{\sigma_o}(X_o, U) \longrightarrow \M^*_{\sigma_T}(X_T)
\]
which gives rise to an orientation-preserving diffeomorphism, where the domain $\M^*_{\sigma_o}(X_o, U)$ is the subspace of $\M^*_{\sigma_o}(X_o)$ obtained by taking fiber product over $U$. Then the argument in \cite[Theorem 19.2.8]{KM07} implies that such diffeomorphisms fit together to make the $\infty$-end of $\M^*_{I, \sigma}(X)$ the boundary of a $1$-manifold. 
\end{proof}

\subsection{The Proof of \autoref{sur1}}

Recall $(X, g)$ is an admissible integral homology $S^1 \times S^3$ decomposed as $X=M \cup_{T^3} N$, where $N= \nu(\mathcal{T})$ is a tubular neighborhood of an embedded torus $\mathcal{T} \subset X$. We have framed $N \cong D^2 \times T^2$ with a basis $\{\mu, \lambda, \gamma\}$ on $\partial N = -T^3$. The surgered manifolds $X_1$ and  $X_{0,1}$ are given respectively by 
\[
X_1 = M \cup_{\varphi_1} N \text{ and } X_{0,1}= M \cup_{\varphi_0} N, 
\]
where under the basis $\{ \mu, \lambda, \gamma \}$ on $-T^3$ we have 
\[
\varphi_1=
\begin{pmatrix}
1 & 0 & 0 \\
1& 1 & 0 \\
0 & 0 & 1
\end{pmatrix} \text{ and }
\varphi_{0, 1}=
\begin{pmatrix}
0 & -1 & 0 \\
1 & 0 & 0 \\
0 & 0 & 1
\end{pmatrix}. 
\]
We put a metric $g_1$ and $g_0$ on $N$ such that $\varphi_1^*(g_1|_{\partial N}) =\varphi_{0,1}^*(g_0|_{\partial N}) = h$. \autoref{str} tells us that $\partial_+: \M^*_{\sigma}(M_o) \to \chi(T^3)$ is transverse to any given submanifold in $\chi(T^3)$ and misses the singular points in $\chi(T^3)$ for small perturbations. Note that $\pi_1(N)$ is abelian. Thus the unperturbed moduli space consists of only reducible instantons. Let us write 
\[
\M_1:=\M^*_{\sigma}(M_o) \text{ and } \M_2:=\M^{\Red}(N_o).
\] 
It follows from \autoref{t8.3} that 
\[
\begin{split}
\# \M^*_{\sigma}(X) & = \# \big( \partial_+(\M_1) \cap \partial_-(\M_2) \big) \\
 \# \M^*_{\sigma}(X_1) & = \# \big( \partial_+(\M_1) \cap \varphi^*_1 \circ \partial_-(\M_2) \big),
\end{split}
\]
where $\varphi^*_1: \chi(T^3) \to \chi(T^3)$ is the map induced by $\varphi_1$. To compare the difference we now put coordinates on the character variety $\chi(T^3)$. 

Let us first consider the $U(1)$-character variety $\mathcal{R}^{U(1)}(T^3)$ which is a double cover of $\chi(T^3)$. Recall that we have fixed a basis $\{ \mu, \lambda, \gamma\}$ for $\partial M = T^3$. To any $U(1)$-connection $A_L$ we assign a coordinate $(x(A_L), y(A_L), z(A_L))$ given by 
\[
x(A_L) = {1 \over {2\pi i }} \int_{\mu} a_L, \; y(A_L) = {1 \over {2\pi i }} \int_{\lambda} a_L, \; z(A_L) = {1 \over {2\pi i }} \int_{\gamma} a_L, 
\]
where $a_L = A - d \in \Omega^1(T^3, i\R)$ is the difference between $A_L$ and the product connection. Modulo $U(1)$-gauge transformations, the coordinates $x, y, z$ take values in $\R/\Z$. Then the holonomies of $A_L$ around $\mu, \lambda, \gamma$ are given respectively by 
\[
\Hol_{\mu}A_L = e^{-2\pi i x}, \;\Hol_{\lambda}A_L = e^{-2\pi i y}, \; \Hol_{\gamma}A_L = e^{-2\pi i z}. 
\]
If we restrict to the fundamental cube 
\[
\mathcal{C}_{T^3}:=\left\{(x, y, z): x, y, z\in [-1/2, 1/2]\right\}, 
\]
the $SU(2)$-character variety $\chi(T^3)$ is identified with the quotient of $\mathcal{C}_{T^3}$ under the equivalence relations $(x, y, z) \sim (-x, -y, -z)$, $(-1/2, y, z) \sim (1/2, y, z)$, $(x, -1/2, z) \sim (x, 1/2, z)$, and $(x, y, -1/2) \sim (x, y, 1/2)$. We shall restrict further to the following portion of the fundamental cube involved in the proof:
\[
\mathcal{C}^o_{T^3}:= \big\{(x, y, z): x \in [-1/2, 0], y \in [0, 1/2], z\in [-1/2, 1/2] \big\}. 
\]
We note that $\mathcal{C}^o_{T^3}$ does not include all information of $\mathcal{C}_{T^3}/\sim$, since the portion satisfying $x \in [-1/2, 0]$ and $y \in [-1/2, 0]$ is not identified in $\mathcal{C}^o_{T^3}$. Nevertheless, $\mathcal{C}^o_{T^3}$ has contained the information of all the reducible strata that are involved in the proof. 

\begin{figure}[h]
\centering
\begin{picture}(200, 220)
\put(-80, -20){\includegraphics[width=0.8\textwidth]{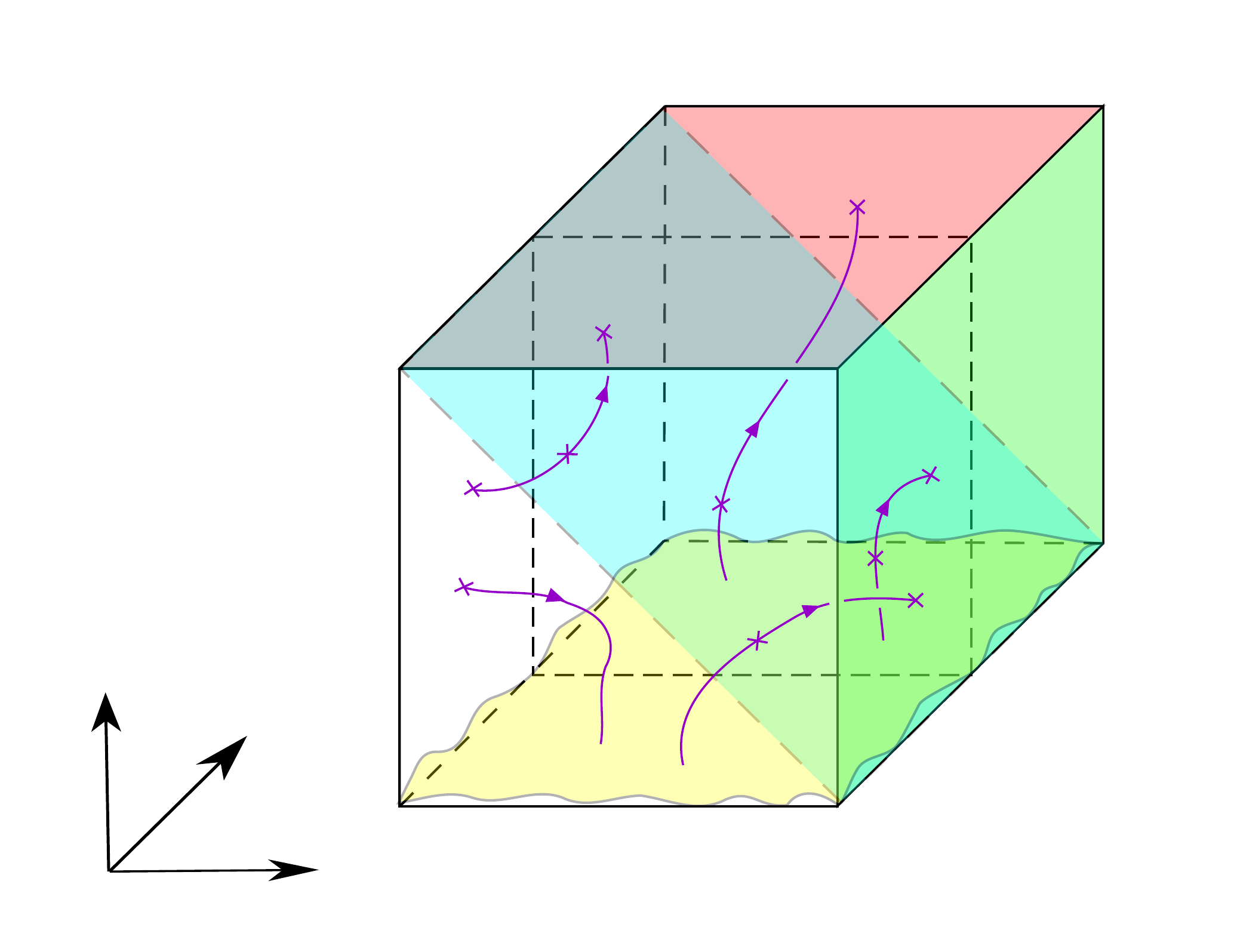}}
\put(-70, 35){$y$}
\put(-37, 28){$z$}
\put(-13, 7){$x$}
\put(170, 190){$P_0$}
\put(190, 150){$P_N$}
\put(45, 123){$P_1$}
\put(45, 27){$\partial_+^{\Red}(M_o)$}
\put(180, 40){$(0, 0, 0)$}
\end{picture}
\caption{The Cube Portion $\mathcal{C}^o_{T^3}$}
\end{figure}

Then the equivalence relations above only identify points on the lower-strata of $\mathcal{C}^o_{T^3}$, i.e. strata of dimension less than $3$ consisting of the faces, edges, and vertices of the cube. Then the image $\partial_-(\M_2)$ is given by the quotient of the plane 
\begin{equation}\label{pn}
P_N:= \left\{(0, y, z) : y \in [0, 1/2], z \in [-1/2, 1/2]\right\}
\end{equation}
whose quotient $[P_N] \subset \chi(T^3)$ is a pillowcase. The image $\partial_+(\M^{\Red}(M_o))$ of the unperturbed reducible locus on the manifold $M_o$ is given by the quotient of the plane
\begin{equation}\label{pm}
P_M:=\left\{(x, 0, z): x \in [-1/2, 0], z \in [-1/2, 1/2] \right\}
\end{equation}
whose quotient $[P_M]$ is also a pillowcase. The image $\varphi_1^* \circ \partial_-(\M_2)$ is given by the quotient of the plane
\[
P_1:=\left\{ (x, -x, z) : x \in [-1/2, 0], z \in [-1/2, 1/2] \right\}
\]
whose quotient $[P_1] \subset \chi(T^3)$ is a cylinder $[0, 1] \times S^1$. Finally we consider a parallel copy of $P_M$ given by
\[
P_0:= \left\{ (x, 1/2, z) : x \in [-1/2, 0], z \in [-1/2, 1/2] \right\}
\]
whose quotient $[P_0] \subset \chi(T^3)$ is again a pillowcase. We then orient the portion of the fundamental cube $\mathcal{C}^o_{T^3}$ by $dx \wedge dy \wedge dz$. It is straightforward to see that the equivalence relation defined on the faces of $\mathcal{C}^o_{T^3}$ is orientation-preserving. Thus all the top strata of the quotient of the planes $P_N$, $P_M$, $P_1$, and $P_0$ are oriented by the orientation induced from that of $\mathcal{C}^o_{T^3}$. Let us consider a solid 
\[
V:=\left\{ (x, y, z) : x+y \geq 0, x \in [-1/2, 0], y \in [0, 1/2], z \in [-1/2, 1/2] \right\}. 
\]
Then the quotient $[V]$ is enclosed by $-[P_1]$, $[P_N]$, and $[P_0]$ in $\chi(T^3)$. Note that 
\[
V \cap P_M =\left\{(0, 0, z): z \in [-1/2, 1/2] \right\}. 
\]
Let $[A] \in \M^{\Red}(M_o)$ be a non-central instanton such that $\partial_+([A]) \in [V]\subset \chi(T^3)$. The admissibility of $X$ implies that $H^1(M_o; \ad A_{\C}) =0$ since $[A]$ comes from a reducible instanton on $X$. Thus $[V]$ avoids the asymptotic values of the bifurcation points in $\M_{\sigma}(M_o)$ with respect to small perturbations. By choosing generic perturbations making $\partial_+$ transverse to $[V]$, we conclude that
\[
\begin{split}
\# \M_{\sigma}^*(X_1) - \# \M_{\sigma}^*(X)  & = \# \big(\partial_+(\M_1) \cap [P_1] \big) - \# \big(\partial_+(\M_1) \cap [P_N] \big) \\
& = \# \big(\partial_+(\M_1) \cap [P_0] \big) =\# \partial_+^{-1}([P_0]). 
\end{split}
\]
Now the proof has been reduced to the following result.

\begin{prop}
Continuing with notations above, one has 
\[
\# \partial_+^{-1}([P_0]) = 4 D^0_{w_{\mathcal{T}}}(X_{0,1}),
\]
where $D^0_{w_{\mathcal{T}}}(X_{0,1})$ counts the irreducible anti-self-dual $SO(3)$-instantons on the $\R^3$-bundle $E_0 \to X_{0,1}$ characterized by 
\[
p_1(E_0) = 0, \; w_2(E_0)=\PD[\mathcal{T}_0] \in H^2(X_{0,1}; \Z/2),
\]
and $\mathcal{T}_0 \subset X_{0,1}$ is the core torus of the gluing $D^2 \times T^2$. 
\end{prop}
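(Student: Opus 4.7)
The plan is to use a neck-stretching argument for $X_0 = M_o \cup_{\varphi_0} N_o$ and apply a standard gluing theorem to match $SO(3)$-instantons on $E_0 \to X_0$ with pairs of $SU(2)$-instantons on the two sides, glued along $T^3$ by a gauge twist encoding $w_{\mathcal{T}}$.

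First I would identify the local structure of $E_0$. Since $\mathcal{T}_0 \cap M = \varnothing$, the class $w_\mathcal{T} = \PD[\mathcal{T}_0]$ restricts trivially to $M$; using the long exact sequence of the pair $(N, \partial N)$, the natural map $H^2(N, \partial N; \Z/2) \to H^2(N; \Z/2)$ is zero because the generator $[T^2]$ of $H_2(N; \Z)$ is hit by the 2-cycle $[\lambda_N \wedge \gamma_N]$ in $H_2(\partial N; \Z)$, so $w_\mathcal{T}$ also restricts trivially to $N$. Both $E_0|_M$ and $E_0|_N$ are therefore trivial $SO(3)$-bundles, and by Mayer--Vietoris there exists $\ell \in H^1(T^3; \Z/2)$ with $w_\mathcal{T} = \delta(\ell)$; pairing $\delta(\ell)$ with the linking sphere in $H_2(X_0; \Z/2)$ made of the two disks bounding $\lambda$ in $M$ and in $N$ identifies $\ell$ as the class dual to $[\mu \wedge \gamma] \in H_2(T^3;\Z/2)$, i.e.\ evaluating to $1$ on $\lambda$ and to $0$ on $\mu$ and $\gamma$. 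Therefore $E_0$ is obtained by gluing the trivial $SO(3)$-bundles on $M$ and $N$ by a gauge transformation $g: T^3 \to SO(3)$ whose restriction to the $\lambda$-loop generates $\pi_1(SO(3))$ and whose restrictions to $\mu$ and $\gamma$ are null-homotopic.

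Next I would translate the twist $g$ into an asymptotic condition on $SU(2)$-lifts. Pick local $SU(2)$-lifts $\tilde A_M, \tilde A_N$ of the $SO(3)$-connections on $M_o$ and $N_o$. The compatibility at the neck $A_M|_{T^3} = g \cdot A_N|_{T^3}$, read through the only locally-defined $SU(2)$-lift of $g$, translates into
\[
\tilde\rho_M(\mu) = \tilde\rho_N(\mu),\quad \tilde\rho_M(\gamma) = \tilde\rho_N(\gamma),\quad \tilde\rho_M(\lambda) = -\tilde\rho_N(\lambda),
\]
the last sign being produced by the fact that any $SU(2)$-lift of $g|_\lambda$ is a non-closed path from $+1$ to $-1$. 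Because $\mu_N$ bounds the core disk of $N$ and finite-energy instantons have trivial asymptotic holonomy around loops contractible in the interior, $\tilde\rho_N(\mu_N) = +1$; via $\varphi_0(\mu_N) = \lambda$ this yields $\tilde\rho_M(\lambda) = -1$, so $\partial_+[\tilde A_M] \in [P_0]$, while the other two coordinates of $\tilde\rho_M$ remain free and sweep out the full pillowcase. Conversely, each $[\tilde A_M] \in \partial_+^{-1}([P_0])$ determines a unique matching reducible flat connection on $N_o$ through the $N$-side asymptotic map (a diffeomorphism onto $[P_N]$), and the gluing theorem of \cite{D02, MM93, M88}, applied with a perturbation as in Theorem \ref{str} making $\partial_+$ transverse to $[P_0]$, produces an ASD $SO(3)$-instanton on $E_0$.

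Finally I would count the fiber of the resulting map $\partial_+^{-1}([P_0]) \to \M^{SO(3),*}_\sigma(X_0, E_0)$. The $SO(3)$-gauge group on $E_0$ contributes $\pi_0 = H^1(X_0; \Z/2) = (\Z/2)^2$, whose restrictions to the two sides implement sign-flips of $\tilde\rho(\mu)$ and $\tilde\rho(\gamma)$ simultaneously on $M$ and $N$. Of these four classes, the one dual to $\gamma$ is absorbed by the $N$-side lift ambiguity (flipping $\tilde\rho_M(\gamma)$ is matched by the corresponding flip of $\tilde\rho_N(\gamma_N)$ given by $\gamma_N^* \in H^1(N;\Z/2)$, producing the same glued $SO(3)$-connection), so the effective free action on $\partial_+^{-1}([P_0])$ is a single $\Z/2$ generated by the class dual to $\mu$. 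Quotienting by this $\Z/2$ identifies pairs of points in $\partial_+^{-1}([P_0])$ with each $SO(3)$-instanton on $X_0$; a signed orientation check, following the determinant-line conventions at the end of Section \ref{trl}, confirms that the two points in each orbit carry the same sign, yielding
\[
\# \partial_+^{-1}([P_0]) = 2\, D^0_{w_\mathcal{T}}(X_0).
\]
The main technical obstacle will be the orientation bookkeeping across the $SU(2)$-to-$SO(3)$ quotient and through the sign-flip introduced by the lift of $g$ along $\lambda$; this requires careful use of the orientation scheme fixed at the end of Section \ref{trl}.
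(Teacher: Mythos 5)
Your overall strategy coincides with the paper's: identify $\partial_+^{-1}([P_0])$ with the irreducible $SU(2)$-instantons on $M_o$ whose asymptotic holonomy around $\lambda$ is $-1$, pass to $SO(3)$ via the adjoint, glue to the flat piece on $N$ (using that $\varphi_0$ carries the meridian of $N$ to $\lambda$), and read off $w_2(E_0)=\PD[\mathcal{T}_0]$ from the failure of a global $SU(2)$-lift. Your identification of the clutching class $\ell\in H^1(T^3;\Z/2)$ with $\ell(\lambda)=1$ is a correct elaboration of what the paper leaves implicit.

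The gap is in the final fiber count, which is the actual content of the factor $2$. The set of $SU(2)$-lifts of a fixed irreducible $SO(3)$-instanton on $M_o$, modulo $SU(2)$-gauge, is a torsor over $H^1(M_o;\Z/2)\cong(\Z/2)^2$; since every class in the image of $H^1(M_o;\Z/2)\to H^1(T^3;\Z/2)$ vanishes on $\lambda$ (because $[\lambda]=0$ in $H_1(M;\Z/2)$), all four lifts have asymptotic value on the plane $y=\tfrac12$, hence all four lie in $\partial_+^{-1}([P_0])$, and all four induce the same glued $SO(3)$-connection on $X_0$. Your claim that only the $\gamma$-dual twist is ``absorbed by the $N$-side lift ambiguity'' while the $\mu$-dual twist acts effectively is internally inconsistent: since $\varphi_0$ carries $\lambda_N$ to $-\mu$, the $\mu$-dual twist on the $M$-side is matched by the class $\lambda_N^*\in H^1(N;\Z/2)$ in exactly the same way the $\gamma$-dual twist is matched by $\gamma_N^*$. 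Applied symmetrically, your absorption argument would eliminate both classes and would not yield the factor $2$; applied to neither, the map $\partial_+^{-1}([P_0])\to \M^*(E_0)$ onto the full $SO(3)$-gauge quotient comes out generically $4$-to-$1$. Note that the paper attributes the factor $2$ to a different source, namely the two-element kernel $\Map(M_o,\pm 1)$ of $\G^{SU(2)}(M_o)\to\G^{SO(3)}(M_o)$, i.e.\ the discrepancy between the two gauge equivalences on $M_o$ itself, not to the component group $H^1(X_0;\Z/2)$ of the gauge group on $X_0$. To close your argument you must (i) pin down which gauge group the count $D^0_{w_{\mathcal{T}}}(X_0)$ is taken modulo, and (ii) show the correspondence is exactly $2$-to-$1$ with matching signs; as written, step (ii) does not follow.
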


\begin{proof}
From its construction, $\# \partial_+^{-1}([P_0])$ counts irreducible $SU(2)$-instantons on $M_o$ whose asymptotic holonomy around $\lambda$ is $\Diag(-1) \in SU(2)$. Passing to the adjoint bundle, every $SU(2)$-connection gives rise to an $SO(3)$-connection. Let $[A] \in \partial_+^{-1}([P_0])$. Then $A$ corresponds to a perturbed anti-self-dual $SO(3)$-connection $A'$ whose asymptotic holonomy around $\lambda$ is the identify. Since performing $0$-surgery amounts to gluing $D^2 \times T^2$ by sending the meridian $\partial D^2 \times \{pt.\} \times \{pt.\}$ to $\lambda$, the gluing theorem for $SO(3)$-instantons glues $[A']$ to an $SO(3)$-instanton $[A'_0]$ on $E_0$. The fact that $A'_0$ fails to lift to an $SU(2)$-connection forces $w_2(E_0) = \PD[\mathcal{T}_0]$. 

In this way, we have identified $\# \partial_+^{-1}([P_0])$ with the counting of irreducible $SO(3)$-instantons over $X_{0,1}$ up to $SU(2)$-gauge transformations. On the other hand, $ D^0_{w_{\mathcal{T}}}(X_{0,1})$ counts irreducible $SO(3)$-instantons up to $SO(3)$-gauge transformations. To compare these groups $\G^{SU(2)}(X_{0,1})$ and $\G^{SO(3)}(X_{0,1})$, we note that there exists an exact sequence \cite[Page 66]{AMY95}:
\[
\Map(X_{0,1}, \Z/2) \to \G^{SU(2)}(X_{0,1}) \xrightarrow{\alpha} \G^{SO(3)}(X_{0,1}) \to H^1(X_{0,1}; \Z/2) \to 1. 
\]
Since $H^1(X_{0,1}; \Z/2) \simeq \Z/2 \oplus \Z/2$, we conclude that $[\G^{SO(3)}(X_{0,1}): \im \alpha] = 4$, which accounts for the factor ‘4’ in the statement. 
\end{proof}

\subsection{An Application to Finite Order Diffeomorphisms}

This subsection is devoted to the proof of \autoref{fod} using the surgery formula. We briefly recall the set-up. Let $\mathcal{K} \subset Y$ be a knot in an integral homology $S^1 \times S^3$. We write $\Sigma_n$ for the $n$-fold cyclic cover of $Y$ branched along $\mathcal{K}$, and $\tau_n: \Sigma_n \to \Sigma_n$ for the covering translation. We assume $\Sigma_n$ is a rational homology sphere, and denote by $X_n$ the mapping torus of $\Sigma_n$ under the covering translation $\tau_n$. 

\begin{proof}[Proof of \autoref{fod}]
The argument is exactly the same as in the case of the Casson--Seiberg--Witten invariant \cite[Proposition 1.2]{M1}. We denote by $\mathcal{T} \subset X_n$ the mapping torus of the branching set $\tilde{\mathcal{K}} \subset \Sigma_n$, and $X'_n$ the manifold resulted from performing $(1, 1)$-surgery of $X_n$ along $\mathcal{T}$. Lemma 7.1 in \cite{M1} tells us that the restriction of the covering translation $\tau_n$ on the knot complement extends to a free self-diffeomorphism $\tau_n': \Sigma'_n \to \Sigma'_n$ with $\Sigma'_n$ the manifold given by performing $1$-surgery of $\Sigma_n$ along $\tilde{\mathcal{K}}$. Moreover $X'_n$ is the mapping torus of $\Sigma'_n$ under $\tau'_n$. From Corollary 7.7 in \cite{RS1}, we have 
\[
\lambda_{FO}(X'_n) = n \lambda(Y) + {1 \over 8} \sum_{m=0}^{n-1} \sign^{m/n}(Y, \mathcal{K}) + \frac{1}{2} \Delta''_{\mathcal{K}}(1).
\]
Let us denote by $X^0_n$ the manifold obtained by performing $(0, 1)$-surgery of $X_n$ along $\mathcal{T}$. In the proof of \cite[Proposition 1.2]{M1}, it has been shown that $X^0_n =S^1 \times Y_0(\mathcal{K})$. Combining the surgery formula and \autoref{0d0s}, we get 
\[
\lambda_{FO}(X_n) = n \lambda(Y) + {1 \over 8} \sum_{m=0}^{n-1} \sign^{m/n}(Y, \mathcal{K}).
\]
\end{proof}

\section{The Excision Formula}\label{fsfo}

\subsection{The Set-Up}
We start with a more explicit description of the excision operation. Let $(X_1, \mathcal{T}_1)$ and $(X_2, \mathcal{T}_2)$ be two pairs of admissible homology $S^1 \times S^3$ with an essentially embedded torus. We choose an identification $\nu(\mathcal{T}_i) \cong D^2 \times T^2$ for a tubular neighborhood of $\mathcal{T}_i$ as in \autoref{sffo} so that we get a basis $\{\mu_i, \lambda_i, \gamma_i \}$ of $\partial \nu(\mathcal{T}_i)=-\partial M_i$ for each $i$, where $M_i = X_i \backslash \nu(\mathcal{T}_i)$. Let $\varphi: \partial M_2 \to \partial M_1$ be a diffeomorphism so that the manifold 
\[
X_1 \#_{\varphi} X_2 := M_1 \cup_{\varphi} M_2
\]
is an admissible homology $S^1 \times S^3$. Let $A_{\varphi}$ be the matrix representing the induced map $\varphi_*: H_1(\partial M_2; \Z) \to H_1(\partial M_1; \Z)$ under the basis $\{[\mu_i], [\lambda_i], [\gamma_i]\}$. Over $D^2 \times T^2$, we write 
\[
\mu'=\{pt.\} \times S^1 \times \{pt.\}, \lambda'=\partial D^2 \times \{pt.\} \times \{pt.\}, \gamma' = \{pt.\} \times \{pt.\} \times S^1. 
\]
We let $X_{1, \varphi}:=M_1 \cup_{\varphi_1} D^2 \times T^2$ and $X_{2, \varphi}:=D^2 \times T^2 \cup_{\varphi_2} M_2$ with the gluing map $\varphi_i$ inducing the matrix $A_{\varphi}$ on first homology groups with respect to the bases $\{[\mu_i], [\lambda_i], [\gamma_i]\}$ and $\{[\mu'], [\lambda'], [\gamma']\}$. 

Since we require $X_1 \#_{\varphi} X_2$ to be an admissible homology $S^1 \times S^3$, the form of the matrix $A_{\varphi}$ can be described more explicitly. Due to the ambiguity of the choice of $\gamma_i$, one can find a framing of $\nu(\mathcal{T}_1)$ by adding to $\gamma_1$ certain multiples of $\mu_1$ and $\lambda_1$ so that $A_{\varphi}$ has the form 
\[
A_{\varphi}=
\begin{pmatrix}
a & b & 0 \\
c & d & 0 \\
p & q & 1
\end{pmatrix}. 
\]
Since $A_{\varphi}$ is orientation-reversing, we have $\det A_{\varphi}=-1$. With the help of the Mayer-Vietoris sequence, one can show that $X_1 \#_{\varphi} X_2$ is an integral homology $S^1 \times S^3$ if and only if $\GCD(aq, b)=1$ and $(aq)^2 + b^2 \neq 0$. If we mimic the argument in \autoref{l8.1} to derive the admissibility of $X_1 \#_{\varphi} X_2$ from that of $X_1$ and $X_2$ purely on the homology level, i.e. $\im r_{M_1} \cap \im r_{M_2} = \im r_{M_1} \cap \im \varphi^* \circ r_{M_2}$, we get 
\[
b=\pm 1, \quad q=0. 
\]
We note that the diffeomorphism type of $X_{1, \varphi}$ is determined by the image $\varphi_{1, *}([\lambda']) \in H_1(\partial M_1; \Z)$. Thus in this case, $X_{1 ,\varphi}$ is obtained from $X_1$ via the $(1, d)$-surgery (since $\varphi_{1, *}([\lambda']) = [\mu] + d[\lambda]$). The same can be derived for $X_{2, \varphi}$. In general we need to know more about the topology of $X_1$ and $X_2$ to determine whether $X_1 \#_{\varphi} X_2$ is admissible with a given matrix $A_{\varphi}$. 

When we take the fiber sum of $(X_1, \mathcal{T}_1)$ and $(X_2, \mathcal{T}_2)$, the gluing map $\varphi_{\mathcal{T}}$ corresponds to the matrix:
\[
A_{\varphi_{\mathcal{T}}} = 
\begin{pmatrix}
0 & 1 & 0 \\
1 & 0 & 0 \\
0 & 0 & 1
\end{pmatrix}. 
\]
We see that $X_{i, \varphi_{\mathcal{T}}} = X_i$, $i=1, 2$, and the fiber sum $X_1 \#_{\mathcal{T}} X_2$ is an admissible integral homology $S^1 \times S^3$ as $b=1, q=0$ in this case. 

To apply the neck-stretching argument, we put metrics $g_1$ on $X_1$ and $g_2$ on $X_2$ so that when restricting to collar neighborhoods of $\partial M_1$ and $\partial M_2$ they are respectively of the form 
\[
dt^2 + \varphi^*h \text{ and } dt^2 + h.
\]
where $h$ is a flat metric on $T^3$. We also let $L$ be the length-parameter of the neck, and write various neck-stretched manifolds as in \autoref{sffo}. 

\subsection{The Proof of \autoref{exif}}

To simplify the notation, we write $N=D^2 \times T^2$. Depending on the context, $N_o$ could mean either $N \cup [0, \infty) \times T^3$ or $(-\infty, 0] \times T^3 \cup N$. Then \autoref{t8.3} tells us that  
\[
\begin{split}
\# \M^*_{\sigma_1}(X_{1, \varphi})  & = \# \big( \partial_+(\M_{\sigma_1}^*({M_{1, o}})) \cap \varphi^* \circ \partial_-(\M^{\Red}(N_o)) \big) \\
\# \M^*_{\sigma_2}(X_{2, \varphi})  & = \# \big( \partial_+(\M^{\Red}(N_o)) \cap \varphi^* \circ \partial_-(\M^*_{\sigma_2}(M_{2, o})) \big),
\end{split}
\]
and moreover
\begin{equation*}
\begin{split}
\# \M^*_{\sigma_1 \# \sigma_2} (X_1 \#_{\varphi} X_2) & = 
\# \big( \partial_+ (\M^{\Red}_{\sigma_1}(M_{1, o})) \cap \varphi^* \circ \partial_-(\M^*_{\sigma_2}(M_{2, o})) \big) \\ 
&+ \# \big( \partial_+ (\M^*_{\sigma_1}(M_{1, o})) \cap \varphi^* \circ \partial_-(\M^*_{\sigma_2}(M_{2, o})) \big) \\ 
& + \# \big( \partial_+ (\M^*_{\sigma_1}(M_{1, o})) \cap \varphi^* \circ \partial_-(\M^{\Red}_{\sigma_2}(M_{2, o})) \big). 
\end{split}
\end{equation*}
The intersection number on right-hand side of the third equation above makes sense because we can first fix a generic $\sigma_2$, then choose a generic $\sigma_1$ so that the asymptotic map $\partial_+: \M^*_{\sigma}(M_{1, o}) \to \chi(T^3)$ is transverse to $\varphi^* \circ \partial_-(\M_{\sigma_2}(M_{2, o}))$. 

As in the proof of \autoref{sur1}, we regard the character variety $\chi(T^3)$ as the quotient of the fundamental cuber $\mathcal{C}_{T^3}$ under appropriate relations. We identify the copy $T^3= \partial M_1$ with a basis given by $\{\mu_1, \lambda_1, \gamma_1\}$. Then 
\[
\partial_+(\M^{\Red}(M_{1, o})) = [P_M] \text{ and }\varphi^* \circ \partial_- (\M^{\Red}(M_{2, o})) =\varphi^* [P_M],
\]
where $P_M$ is defined in (\ref{pm}). Note that the admissibility of $X_1 \#_{\varphi} X_2$ ensures there are no bifurcation points in $\M^{\Red}(M_{1, o})$ nor $\M^{\Red}(M_{2, o})$ that are asymptotic to $[P_M] \cap \varphi^*[P_M]$, for otherwise the Mayer--Vietoris sequence would produce a $U(1)$-representation $\rho$ corresponding to a bifurcation point with the property that $H^1(X_1 \#_{\varphi} X_2, \C_{\rho}) \neq 0$. Since $\dim \M^*_{\sigma_1}(M_{1, o}) = \dim \M^*_{\sigma_2}(M_{2, o}) = 1$, the transversality of the asymptotic maps implies that 
\begin{equation}\label{fibr1}
\partial_+ (\M^*_{\sigma_1}(M_{1, o})) \cap \varphi^* \circ \partial_-(\M^*_{\sigma_2}(M_{2, o})) = \varnothing. 
\end{equation}

The proof of \autoref{TSC} gives us an isotopy from the unperturbed reducible locus $\M^{\Red}(M_{2, o})$ to the perturbed one $\M^{\Red}_{\sigma_2}(M_{2, o})$. Thus we conclude the counting 
\[
\# \big( \partial_+ (\M^*_{\sigma_1}(M_{1, o})) \cap \varphi^* \circ \partial_-(\M^{\Red}_{\sigma_2}(M_{2, o})) \big) 
\]
is equal to
\[
\# \big( \partial_+ (\M^*_{\sigma_1}(M_{1, o})) \cap \varphi^* \circ \partial_-(\M^{\Red}(M_{2, o})) \big). 
\]
Note that $\varphi^* \circ \partial_-(\M^{\Red}(M_{2, o})) = \varphi^* \circ \partial_-\M^{\Red}(N_o).$ Thus 
\begin{equation}\label{fibr2}
\# \big( \partial_+ (\M^*_{\sigma_1}(M_{1, o})) \cap \varphi^* \circ \partial_-(\M^{\Red}_{\sigma_2}(M_{2, o})) \big)  = \# \M^*_{\sigma_1}(X_{1, \varphi}).
\end{equation}
Similarly we have 
\begin{equation}\label{fibr3}
\# \big( \partial_+ (\M^{\Red}_{\sigma_1}(M_{1, o})) \cap \varphi^* \circ \partial_-(\M^*_{\sigma_2}(M_{2, o})) \big) = \# \M^*_{\sigma_2}(X_{2, \varphi}). 
\end{equation}
Combining (\ref{fibr1}), (\ref{fibr2}), and (\ref{fibr3}), we conclude that
\[
\# \M^*_{\sigma_1 \# \sigma_2} (X_1 \#_{\varphi} X_2) = \# \M^*_{\sigma_1}(X_{1, \varphi}) + \# \M^*_{\sigma_2}(X_{2, \varphi}),
\]
which finishes the proof.

\subsection{Examples}

In this subsection we compute the Furuta--Ohta invariants for two families of admissible integral homology $S^1 \times S^3$ arising from mapping tori under diffeomorphisms of infinite order. 

\begin{exm}
Let $(Y_1, \mathcal{K}_1)$ and $(Y_2, \mathcal{K}_2)$ be two pairs of integral homology sphere with an embedded knot. Fix two integers $n_1, n_2 > 1$. In what follows, $j=1$ or $j=2$. We denote by $\Sigma_j$ the cyclic $n_j$-fold cover of $Y_i$ branched along $\mathcal{K}_j$, and $\tilde{\mathcal{K}}_j$ the preimage of $\mathcal{K}_j$ in the cover $\Sigma_j$. Now we take $X_j$ to be the mapping torus of $\Sigma_j$ under the covering translation, and $\mathcal{T}_j$ the mapping torus of $\tilde{\mathcal{K}}_j$. Then the fiber sum formula tells us that the Furuta--Ohta invariant of $X_1 \#_{\mathcal{T}} X_2$ is given by 
\begin{align*}
\lambda_{FO} (X_1 \#_{\mathcal{T}} X_2) & = n_1\lambda(Y_1) + {1 \over 8}  \sum_{m_1=1}^{n_1-1}\sign^{m_1/n_1}(Y_1, \mathcal{K}_1) \\
& + n_2 \lambda(Y_2) + {1 \over 8} \sum_{m_2=1}^{n_2-1}\sign^{m_2/n_2}(Y_2, \mathcal{K}_2). 
\end{align*}
We claim that the fiber $X_1 \#_{\mathcal{T}} X_2$ is the mapping torus of the knot splicing, denoted by $\Sigma_1 \#_{\mathcal{K}} \Sigma_2$, under certain self-diffeomorphism. We denote by $\tau_j$ the covering translation on $\Sigma_j$. A $\tau_j$-invariant neighborhood of $\tilde{K}_j$ is identified with $S^1 \times D^2$ where $\tau_j$ acts as 
\[
\tau_j(e^{i\eta_j}, re^{i\theta_j})  = \left(e^{i\eta_j}, re^{i(\theta_j+{2\pi \over {n_j}})} \right).
\]
A neighborhood of $\mathcal{T}_j$ is now identified with $[0, 1] \times S^1 \times D^2 / \sim$, for which we identify with $D^2 \times T^2$ as follows: 
\begin{align*}
\nu(\mathcal{T}_j) & \longrightarrow S^1 \times S^1 \times D^2 \\
[t, e^{i\eta_j}, e^{i\theta_j}] & \longmapsto \left([t], e^{i\eta_j}, re^{i(\theta_j+t{2\pi \over {n_j}})} \right).
\end{align*}
Under the identifications above, along the mapping circle the knot complements $V_1:=\Sigma_1 \backslash \nu(\tilde{\mathcal{K}}_1)$ and $V_2:=\Sigma_2 \backslash \nu(\tilde{\mathcal{K}}_2)$ are glued at time $t$ via the map
\begin{align*}
\phi_t: \partial V_2 & \longrightarrow \partial V_1 \\
(e^{i\eta_2}, e^{i \theta_2}) & \longmapsto \left(e^{i(\theta_2+t{2\pi \over {n_2}})}, e^{i(\eta_2 - t {2\pi \over {n_1}})} \right). 
\end{align*}
We write $F_t:=V_1 \cup_{\phi_t} [0, 1]_s \times T^2 \cup_{\id} V_2$ for the fiber at time $t$. We identify $F_t$ with $F_0$ by inserting the isotopy $\phi_0^{-1} \circ \phi_{(1-s)t}$ from $\phi^{-1}_0 \circ \phi_t$ to $\id$ along $[0, 1] \times T^2$, and denote by $f_t: F_t \to F_0$ this identification. From its construction, $F_0$ is the knot splicing $\Sigma_1 \#_{\mathcal{K}} \Sigma_2$. To see how the monodromy map looks like, we note that for $x \in \{0\} \times T^2 \subset F_1$, one has 
\[
\phi_0 \circ \tau_2(x) = \tau_1 \circ \phi_1(x). 
\]
Thus $\tau_1$ and $\tau_2$ combine to a map $\tau_1 \# \tau_2: F_1 \to F_0$. So the monodromy map is given by $\tau_1 \# \tau_2 \circ f^{-1}_1: F_0 \to F_0$ whose restriction to the neck $[0, 1] \times T^2 \subset F_0$ has the form
\[
(s, e^{i\eta}, e^{i\theta}) \longmapsto \left(s, e^{i(\eta+(1-s){2\pi \over {n_1}})}, e^{i(\theta+ s{2\pi \over {n_2}})} \right).
\]
In particular the monodromy map is of infinite order. 
\end{exm}

\begin{exm}
We consider the `Dehn twist' along a torus in this example. Let $Y=Y_1 \#_{\mathcal{K}} Y_2$ be a splicing of two integral homology spheres along embedded knots. We denote by $V_i$ the knot complement in $Y_i$, and write $Y=V_1 \cup [0, 1]_s \times T^2 \cup V_2$. We use $(e^{i\eta}, e^{i\theta}) \in S^1 \times S^1$ to parametrize $T^2$ so that $S^1 \times \{pt.\}$ is null-homologous in $V_1$ and $\{pt.\} \times S^1$ is null-homologous in $V_2$. Let $p, q$ be a relatively prime pair and $c:[0, 1] \to T^2$ be a curve 
\[
c(t):=(e^{i(\eta + t2\pi p)}, e^{i(\theta+ t2\pi q)}). 
\]
The Dehn twist along $c$ is a diffeomorphism $\tau_c: Y \to Y$ whose restriction on $V_1$ and $V_2$ is identity, and on the neck $[0, 1] \times T^2$ is given by 
\[
\tau_c(s, e^{i\eta}, e^{i\theta}) = (s, e^{i (\eta + s2\pi p)}, e^{i(\theta + s2\pi q)}).
\]
Then we see that $\tau_c$ has infinite order. Let $X_c$ be the mapping torus of $Y$ under $\tau_c$. Since $Y$ is an integral homology sphere, $X_c$ is an admissible homology $S^1 \times S^3$. 

We claim that $X_c$ is given by torus excision. Let $M_1 = S^1 \times V_1$ and $M_2=S^1 \times V_2$. We regard $X_c=[0, 1]_t \times Y / \sim$, where $(0, \tau_c(y)) \sim (1, y)$. Since $\tau_c|_{[0, 1] \times T^2}$ is isotopic to identify, we can identify $[0,1]_t \times ([0,1]_s \times T^2 \cup V_2) / \sim$ with $[0, 1] \times T^3 \cup M_2$ by 
\[
(t, s, e^{i\eta}, e^{i\theta}) \longmapsto \big( s, e^{2\pi t}, e^{i(\eta+(1-t)s2\pi p)}, e^{i(\theta+ (1-t)s2\pi q)} \big). 
\]
Then $[0, 1] \times T^3 \cup M_2$ is glued to $M_1$ by 
\[
(e^{i\zeta}, e^{i\eta}, e^{i\theta}) \longmapsto (e^{i\zeta}, e^{i(\eta - p\xi )}, e^{i(\theta- p \xi)}). 
\]
In terms of the gluing matrix, the gluing map $\varphi$ is given by 
\[
A_{\varphi}=
\begin{pmatrix}
0 & 1 & 0 \\
1 & 0 & 0 \\
-p & -q & 1
\end{pmatrix}. 
\]
If we write $X_{1, \varphi}=M_1 \cup_{\varphi} D^2 \times T^2$, $X_{2, \varphi}=D^2 \times T^2 \cup_{\varphi} M_2$, then  
\[
\lambda_{FO}(X_c) = \lambda_{FO}(X_{1, \varphi}) + \lambda_{FO}(X_{2, \varphi}).
\]
Finally, we note that $X_{i, \varphi}$ is obtained from $S^1 \times Y_i$ by a torus surgery. However, the gluing map $\varphi$ is not the type we considered in \autoref{surq}, and we do not know how to compare $\lambda_{FO}(X_{i, \varphi})$ with $\lambda_{FO}(S^1 \times Y_i)$ in this case. 
\end{exm}

\bibliographystyle{alpha}
\bibliography{ReferencesLFO}
\end{document}